\numberwithin{equation}{section}
\theoremstyle{plain}
\newtheorem{theorem}{Theorem}[section]
\newtheorem{maintheorem}{Theorem}
\newtheorem{proposition}[theorem]{Proposition}
\newtheorem{lemma}[theorem]{Lemma}
\theoremstyle{remark}
\newtheorem{remark}[theorem]{Remark}
\newtheorem{example}[theorem]{Example}
\newtheorem*{ack}{Acknowledgement}
\theoremstyle{definition}
\newcommand{\LL}{\mathcal{L}}
\newcommand{\MM}{\mathcal{M}}
\newcommand{\R}{\mathbb{R}}
\newcommand{\N}{\mathbb{N}}
\newcommand{\iii}{\mathtt{i}}
\newcommand{\jjj}{\mathtt{j}}
\newcommand{\kkk}{\mathtt{k}}
\newcommand{\eps}{\varepsilon}
\newcommand{\fii}{\varphi}
\newcommand{\ualpha}{\overline{\alpha}}
\newcommand{\lalpha}{\underline{\alpha}}
\newcommand{\la}{\langle}
\newcommand{\ra}{\rangle}
\DeclareMathOperator{\dimh}{dim_H}
\DeclareMathOperator{\dimaff}{dim_{aff}}
\DeclareMathOperator{\Aut}{Aut}
\DeclareMathOperator{\End}{End}
\DeclareMathOperator{\linspan}{span}
\DeclareMathOperator{\sgn}{sgn}
\begin{document}

\title[Structure of equilibrium states on self-affine sets]{Structure of equilibrium states on self-affine sets and strict monotonicity of affinity dimension}

\author{Antti K\"aenm\"aki}
\address{Department of Mathematics and Statistics \\
         P.O. Box 35 (MaD) \\
         FI-40014 University of Jyv\"askyl\"a \\
         Finland}
\email{antti.kaenmaki@jyu.fi}

\author{Ian D. Morris}
\address{Department of Mathematics\\
	University of Surrey\\
	Guildford GU2 7XH\\
	United Kingdom}
\email{i.morris@surrey.ac.uk}

\subjclass[2000]{Primary 28A80, 37D35; Secondary 37H15.}
\keywords{Iterated function system, self-affine set, products of matrices, affinity dimension, thermodynamic formalism, Lyapunov dimension, equilibrium state}
\date{\today}

\begin{abstract}
A fundamental problem in the dimension theory of self-affine sets is the construction of high-dimensional measures which yield sharp lower bounds for the Hausdorff dimension of the set. A natural strategy for the construction of such high-dimensional measures is to investigate measures of maximal Lyapunov dimension; these measures can be alternatively interpreted as equilibrium states of the singular value function introduced by Falconer. Whilst the existence of these equilibrium states has been well-known for some years their structure has remained elusive, particularly in dimensions higher than two.  In this article we give a complete description of the equilibrium states of the singular value function in the three-dimensional case, showing in particular that all such equilibrium states must be fully supported. In higher dimensions we also give a new sufficient condition for the uniqueness of these equilibrium states.  As a corollary, giving a solution to a folklore open question in dimension three, we prove that for a typical self-affine set in $\R^3$, removing one of the affine maps which defines the set results in a strict reduction of the Hausdorff dimension.
\end{abstract}

\maketitle

\section{Introduction}

If $f_1,\ldots,f_N$ are contractions of a complete metric space $X$ it is well-known that there exists a unique nonempty compact set $E \subset X$ such that $E=\bigcup_{i=1}^N f_i(E)$; see \cite{Hutchinson1981}. In this circumstance the tuple $(f_1,\ldots,f_N)$ is called an \emph{iterated function system (IFS)} and $E$ its \emph{attractor}. Iterated function systems have been extensively studied since the 1980s as idealised models for the fractal structure of attractors and repellers of dynamical systems. A central problem in the study of iterated function systems is to calculate or estimate the dimension of the attractor $E$ for various notions of fractal dimension, most especially the Hausdorff dimension.

Particular interest has been given to the case of \emph{affine} and \emph{similitude} iterated function systems, where the ambient space $X$ is given by $\mathbb{R}^d$ for some $d \in \N$ and the contractions $f_i$ take the form $f_i(x) = A_ix+v_i$ for certain (usually invertible) linear maps $A_i$ and vectors $v_i$. Any iterated function system of this type is called \emph{affine}. In the special case where each of the linear maps $A_i$ is a scalar multiple of an isometry, the system is more usually called \emph{similitude}. The associated attractors are called \emph{self-affine} and \emph{self-similar}, respectively. The dimension theory of self-similar sets satisfying the open set condition -- a condition on the transformations $f_i$ which guarantees that the images $f_i(E)$ do not overlap significantly for different $i$ -- was completely resolved in 1981 by Hutchinson \cite{Hutchinson1981}. Since that time research attention has been divided between the study of self-similar sets and measures which do not satisfy the open set condition (see e.g.\ \cite{Hochman2014, Shmerkin2014}) and the dimension theory of self-affine sets and measures which are not self-similar. This article is concerned with the latter field of investigation, which has been a source of subtle and persistent open problems since it was first substantially investigated in the 1980s; see \cite{Bedford1984, Falconer1988, McMullen1984}. In many cases the problem can be made more tractable either by assuming some randomness in the defining affine IFS, as in \cite{BarralFeng2013, FengShmerkin2014, JordanPollicottSimon2007}, or by imposing special relations between the affine maps, as in \cite{Baranski2007, DasSimmons2016, Fraser2012}. Only very recently has the general deterministic case started to become accessible to researchers (see e.g.\ \cite{BaranyKaenmaki2015, MorrisShmerkin2016}), and then only for self-affine subsets of the plane.

A seminal early paper by Falconer \cite{Falconer1988} gave a formula for the Hausdorff dimension of a ``typical'' self-affine set. The \emph{affinity dimension} of a self-affine set is a number defined in terms of the linear parts of the affine transformations $f_i$ which was shown by Falconer to be equal to the Hausdorff dimension for Lebesgue-almost-every choice of the additive parts $v_i$, subject to the additional assumption that the affinities contract $\mathbb{R}^d$ by a factor stronger than $\frac{1}{2}$. In certain exceptional cases the Hausdorff dimension can be strictly smaller than the affinity dimension; for some examples of this phenomenon, see e.g.\ \cite{Bedford1984, DasSimmons2016, Fraser2012, McMullen1984}. Since Falconer's theorem was proved, a long-standing topic of investigation has been to find testable sufficient conditions for the Hausdorff dimension of a self-affine set to equal its affinity dimension, see e.g.\ \cite{Barany2015, HueterLalley1995, KaenmakiShmerkin2009, MorrisShmerkin2016}.
Falconer was able to show unconditionally in \cite{Falconer1988} that the affinity dimension of a self-affine set is always an upper bound for the Hausdorff dimension, and the challenge of this problem is therefore to bound the Hausdorff dimension from below. The standard approach to problems of this type is to construct measures on a given self-affine set whose Hausdorff dimension approximates the anticipated value (in this case affinity dimension) from below, and it is with this project that our research is ultimately concerned.

If an affine IFS $(f_1,\ldots,f_N)$ on $\mathbb{R}^d$ is given, then for every sequence $(i_n)_{n=1}^\infty \in \{1,\ldots,N\}^{\mathbb{N}}$ the limit
\begin{equation}\label{eq:projection}
  \lim_{n\to \infty} f_{i_1} \circ f_{i_2} \circ \cdots \circ f_{i_n}(x)
\end{equation}
exists for every $x \in \mathbb{R}^d$ and is constant with respect to the choice of initial point $x$. This observation gives rise to a natural projection $\pi \colon \{1,\ldots,N\}^{\mathbb{N}}\to \mathbb{R}^d$ associated to the iterated function system $(f_1,\ldots,f_N)$ which takes each sequence $(i_n)_{n=1}^\infty  \in \{1,\ldots,N\}^{\mathbb{N}}$ to the unique limit point of \eqref{eq:projection} in $\mathbb{R}^d$ which corresponds to that sequence. It is easy to see that the projection of the symbolic space $\{1,\ldots,N\}^\N$ is simply the entire self-affine set $E$. A natural approach to the construction of high-dimensional measures on the self-affine set is as projections of shift-invariant measures on the symbolic space. In the self-similar case it is sufficient to consider projections of Bernoulli measures, and this is the method used by Hutchinson \cite{Hutchinson1981} to resolve the self-similar case. In the self-affine case, the appropriate measures must be constructed via a variational principle and arise as equilibrium states of the singular value function (defined in the following section). While the existence of these equilibrium states has been known for some time (\cite{Kaenmaki2004}) their structure has remained poorly understood, and in dimensions higher than two it is not even known whether or not the number of ergodic equilibrium states associated to a self-affine IFS is finite.

In this article, we conduct the first detailed investigation of these equilibrium states in dimensions higher than two. We completely characterise the equilibrium states in dimension three and compute exactly the maximum possible number of ergodic equilibrium states in that dimension. We also give a new general sufficient condition for the existence of a unique ergodic equilibrium state in arbitrary dimensions, and when this condition holds the unique equilibrium state additionally enjoys a certain natural Gibbs property. As a concrete application of this work we show that for three-dimensional affine IFS defined by invertible affinities the removal of one of the affine transformations $f_i$ strictly reduces the affinity dimension of the associated self-affine set. In particular, this implies via the theorem of Falconer that for almost every three-dimensional self-affine set with contraction coefficient smaller than $\frac{1}{2}$, the Hausdorff dimension of the self-affine set is strictly reduced when one of the affine transformations is removed. This answers a folklore open problem in dimension three or lower which has recently been propagated by Schmeling. Examples are given to show that invertibility of the affinities is necessary for this property and that invertible exceptional cases exist.

\section{Statement of results} \label{se:sec2}

To state our results formally we first summarise some foundational results and definitions. Many of these preliminaries will, for the moment, be asserted without proof, with rigorous treatments being deferred to the following section. We recall that the \emph{singular values} $\alpha_1(A),\ldots,\alpha_d(A)$ of a real $d \times d$ matrix $A$ are defined to be the square roots of the non-negative real eigenvalues of the positive semidefinite matrix $A^TA$ listed in decreasing order with repetition according to multiplicity. If $0 \leq s \leq d$ then the \emph{singular value function} of $A$ with parameter $s$, denoted by $\fii^s(A)$, is defined to be the real number
\[
  \fii^s(A)=\alpha_1(A) \cdots \alpha_{\lfloor s \rfloor}(A) \alpha_{\lceil s \rceil}(A)^{s-\lfloor s \rfloor}.
\]
Intuitively, the value $\fii^s(A)$ represents a measurement of the $s$-dimensional volume of the image of the Euclidean unit ball under $A$. The function $(A,s)\mapsto \fii^s(A)$ is continuous in both $A \in GL_d(\mathbb{R})$ and $s\in [0,d]$, and satisfies $\fii^s(AB)\leq \fii^s(A)\fii^s(B)$.

If an $N$-tuple of $d \times d$ matrices $(A_1,\ldots,A_N)\in GL_d(\mathbb{R})^N$ is given (where here and throughout we assume $N \geq 2$) then for each $s \in [0,d]$ we define the \emph{singular value pressure} of $\mathsf{A}=(A_1,\ldots,A_N)$ with parameter $s$ to be the quantity
\[
  P_{\mathsf{A}}(\fii^s)=\lim_{n\to\infty} \tfrac{1}{n}\log \sum_{i_1,\ldots,i_n=1}^N \fii^s\left(A_{i_1}\cdots A_{i_n}\right)
\]
which exists by subadditivity. For a fixed invertible tuple $\mathsf{A}=(A_1,\ldots,A_N)$ the singular value pressure depends continuously on $s$, and when each $A_i$ is a contraction in the Euclidean norm it is a strictly decreasing function of $s$. In the latter case the \emph{affinity dimension} of $\mathsf{A}$ is defined to be the unique zero of $s \mapsto P_{\mathsf{A}}(\varphi^s)$ for $s \in[0,d]$ when such a zero exists, and $d$ otherwise, in which case $P_{\mathsf{A}}(\fii^s)>0$ for all $s \in [0,d]$. If $f_1,\ldots,f_N \colon \mathbb{R}^d \to \mathbb{R}^d$ are affine contractions defined by $f_i(x)=A_ix+v_i$ for all $x \in \mathbb{R}^d$ then by the classical result of Falconer \cite{Falconer1988} the affinity dimension of $\mathsf{A}$ is an upper bound for the Hausdorff dimension of the associated self-affine set, and as mentioned previously this upper bound is attained for Lebesgue-almost-every choice of the additive parts $v_i$ when the norm of each $A_i$ is less than one half.

The singular value pressure and affinity dimension of $(A_1,\ldots,A_N)$ are related in essential ways to certain properties of shift-invariant measures on the associated space of sequences $\{1,\ldots,N\}^{\mathbb{N}}$. Let us fix $\mathsf{A}=(A_1,\ldots,A_N) \in GL_d(\mathbb{R})^N$ and $s \in [0,d]$, where each $A_i$ is assumed to be a contraction. If $\mu$ is a Borel probability measure on the compact metrisable space $\{1,\ldots,N\}^{\mathbb{N}}$ which is ergodic and invariant with respect to the shift transformation $(i_n)_{n=1}^\infty \mapsto (i_{n+1})_{n=1}^\infty$, then we define
\[
  \lambda_{\mathsf{A}}(\fii^s,\mu)=\lim_{n\to\infty}\tfrac{1}{n}\int \log \fii^s(A_{i_1}\cdots A_{i_n})\,\mathrm{d}\mu\left[(i_n)_{n=1}^\infty\right]
\]
which is well-defined by subadditivity. The function
\[
  s \mapsto h(\mu)+\lambda_{\mathsf{A}}(\fii^s,\mu),
\]
where $h$ denotes Kolmogorov-Sinai entropy, is then also continuous and strictly decreasing and has at most one zero in $[0,d]$. We define the \emph{Lyapunov dimension} of $\mu$ to be this unique zero when it exists, and $d$ when it does not. The projection $\pi_*\mu$ of the measure $\mu$ onto the self-affine set $E\subset \mathbb{R}^d$ always has Hausdorff dimension bounded above by the Lyapunov dimension of $\mu$ and for fixed $\mu$ and $\mathsf{A}$, the Lyapunov dimension gives the exact value of the Hausdorff dimension of the projected measure $\pi_*\mu$ for Lebesgue-almost-every additive part; see \cite{Rossi2014}. 

For each $s \in [0,d]$ the singular value pressure may be characterised variationally as
\[
  P_{\mathsf{A}}(\fii^s)=\sup (h(\mu)+\lambda_{\mathsf{A}}(\fii^s,\mu))
\]
where the supremum is taken over all shift-invariant Borel probability measures $\mu$ on $\{1,\ldots,N\}^{\mathbb{N}}$. This supremum is always attained by at least one ergodic measure which we call a \emph{$\fii^s$-equilibrium state}; see \cite{Kaenmaki2004}. It was observed in \cite{KaenmakiVilppolainen2010} that such an equilibrium state is not necessarily unique. In general, the number of ergodic measures which can attain this supremum is unknown. This question was brought up already in \cite{Kaenmaki2003}. Importantly, if $s$ is equal to the affinity dimension of $\mathsf{A}$ then any $\fii^s$-equilibrium state is a measure of maximal Lyapunov dimension. The search for lower bounds on the Hausdorff dimension of self-affine sets thus leads naturally via the study of measures of maximal Lyapunov dimension to the investigation of equilibrium states of the singular value function.

In the parameter ranges $0 \leq s \leq 1$ and $d-1\leq s \leq d$, the singular value function simplifies respectively to $\fii^s(A)=\|A\|^s$ and $\fii^s(A)=|\det A|^{s-(d-1)} \|A^{\wedge(d-1)}\|^{d-s}$. Equilibrium states associated to the potential $\|\cdot\|^s$ as opposed to the potential $\fii^s$ have proven relatively easy to understand (see e.g.\ \cite{FengKaenmaki2011, Morris2016, MorrisShmerkin2016}) and in particular this allows the equilibrium states of the singular value function in two dimensions to be completely described, since when $d=2$ the set $[0,1]\cup [d-1,d]$ constitutes the entire parameter range of $s$. This has allowed considerable progress to be made in the dimension theory of planar self-affine sets by the method of showing that suitable measures on $\{1,\ldots,N\}^{\mathbb{N}}$ project to measures whose Hausdorff dimension matches their Lyapunov dimension; see e.g.\ \cite{BaranyRams2015, MorrisShmerkin2016}. In order to extend this strategy to higher-dimensional self-affine IFS, then, it is necessary that the equilibrium states of the singular value function in dimensions higher than two be understood.

Our first main result shows that in the three-dimensional case, there can be at most six distinct ergodic $\fii^s$-equilibrium states and that this number can be achieved. The proof of the result is given in \S \ref{sec:3d}.

\begin{maintheorem}\label{thm:3d}
  Let $0 < s < 3$ and $\mathsf{A}\in GL_3(\mathbb{R})^N$. Then the maximum possible number of distinct ergodic $\varphi^s$-equilibrium states of $\mathsf{A}$ is $6$, if $1<s<2$, and $3$, if otherwise, and every equilibrium state is fully supported.
\end{maintheorem}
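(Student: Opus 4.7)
The plan is to exploit the explicit form of the singular value function in dimension three. For $s \in (0,1]$, the potential reduces to $\varphi^s(A) = \|A\|^s$; for $s \in [2,3)$, to $\varphi^s(A) = |\det A|^{s-2}\|A^{\wedge 2}\|^{3-s}$; and for $s \in (1,2)$, one has $\varphi^s(A) = \alpha_1(A)\alpha_2(A)^{s-1} = \|A\|^{2-s}\|A^{\wedge 2}\|^{s-1}$. In every range $\varphi^s$ is built from at most two ``singular-value factors'' associated to the cocycles on $\R^3$ and on $\Wedge^2 \R^3$, and the count of ergodic equilibrium states should reflect the number of ways an ergodic measure can distribute its growth among the three invariant directions available in dimension three.

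The first step is a structural decomposition: any ergodic $\varphi^s$-equilibrium state $\mu$ is rigidly aligned with a maximal common invariant flag $0 \subset V_1 \subset V_2 \subset \R^3$ of the restricted cocycle, so that its top and middle Lyapunov exponents are realized on specific graded pieces among $V_1$, $V_2/V_1$, and $\R^3/V_2$. This alignment can be extracted from a structural theorem for norm potentials, applied separately to the $\R^3$-action governing $\|A\|^{2-s}$ and to the $\Wedge^2\R^3$-action governing $\|A^{\wedge 2}\|^{s-1}$. Enumerating the orderings case by case: for $s \in (0,1]$, $\mu$ is pinned down by the choice of one dominant direction among three; for $s \in [2,3)$, by the choice of the subdominant direction, after absorbing the multiplicative contribution of $|\det|$; for $s \in (1,2)$, because the two exponents $1$ and $s-1$ are distinct, $\mu$ is pinned down by an \emph{ordered} pair (dominant, second) drawn from three available directions, yielding $3\cdot 2 = 6$. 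Sharpness is established in each case by exhibiting explicit tuples of simultaneously diagonal matrices in which every admissible ordering produces a distinct ergodic equilibrium state.

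For the full support claim, suppose an ergodic equilibrium state $\mu$ satisfies $\mu([i_0]) = 0$ for some $i_0$. Then $\mu$ is simultaneously an equilibrium state for the sub-tuple $\mathsf{A}' = (A_i)_{i \neq i_0}$, forcing $P_{\mathsf{A}'}(\varphi^s) = P_{\mathsf{A}}(\varphi^s)$. A perturbation argument — inserting $i_0$ into $\mu$ at a small density $\varepsilon$, for instance via a Gibbs-type reweighting of large blocks — produces an entropy gain of order $-\varepsilon \log \varepsilon$ against a Lyapunov change of order $\varepsilon$, yielding for small $\varepsilon > 0$ a measure with strictly larger $\varphi^s$-pressure and contradicting the equilibrium property.

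The main obstacle is the structural decomposition underlying the counting in the middle range: proving rigorously that for $s \in (1,2)$ every ergodic $\varphi^s$-equilibrium state is rigidly attached to a single ordered pair of invariant directions, and that distinct pairings produce distinct ergodic equilibrium states. This requires a careful analysis of how the singular value cocycle interacts with the reducibility structure of the restricted matrix tuple — in particular, ruling out ``mixed'' equilibrium states that could straddle two orderings. The analogous fact in dimension two, where only one direction must be chosen among two, was already substantially involved in earlier work, so the three-dimensional situation is expected to be considerably more delicate.
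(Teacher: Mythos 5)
There is a genuine gap, and in fact two. First, for the central range $1<s<2$ your entire count of $3\cdot 2=6$ rests on the asserted ``structural decomposition'' that every ergodic $\varphi^s$-equilibrium state is rigidly attached to an invariant flag $0\subset V_1\subset V_2\subset\R^3$, i.e.\ to an ordered pair of invariant directions. But such directions need not exist: if $\mathsf{A}$ is irreducible there is no common invariant subspace at all, and applying the Feng--K\"aenm\"aki structure theorem separately to the cocycles on $\R^3$ and on $\wedge^2\R^3$ tells you nothing about equilibrium states of the \emph{product} potential $\|A\|^{2-s}\|A^{\wedge 2}\|^{s-1}$ --- that interaction is exactly the difficulty. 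You acknowledge this as ``the main obstacle'' but supply no mechanism for it, and this is where essentially all of the paper's work lies: in the strongly irreducible case one shows (via the Zariski closure of the generated semigroup, which is a Lie group with finitely many components) that $s$-irreducibility and hence $\varphi^s$-quasimultiplicativity holds, giving a \emph{unique} equilibrium state, not six; in the irreducible but not strongly irreducible case one must prove (a nontrivial geometric argument about the finite orbit of an invariant line) that $\mathsf{A}$ is simultaneously conjugate to generalised permutation matrices, and then count via an auxiliary homomorphism $\mathfrak{h}_s\colon P_3(\R)\to P_6(\R)$ reducing $\varphi^s$ to a norm potential in dimension $6$; in the reducible case one must first justify replacing block-upper-triangular by block-diagonal matrices (this uses continuity of the subadditive pressure, not the naive inequality $\varphi^s(A)\ge\varphi^s(A')$ alone) and only then does an ordering-of-Lyapunov-exponents argument of the kind you envisage apply, yielding at most $3$, not $6$, in that subcase. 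None of these steps is present in, or recoverable from, your outline, so the bound $6$ is not established.

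Second, your full-support argument is unsound as stated. If $\mu([i_0])=0$, inserting the symbol $i_0$ at density $\varepsilon$ does gain entropy of order $-\varepsilon\log\varepsilon$, but the claim that the Lyapunov term changes only by $O(\varepsilon)$ is exactly what fails for the subadditive potential $\varphi^s$: one only has the one-sided bound $\varphi^s(A_{\iii i_0\jjj})\le\varphi^s(A_\iii)\varphi^s(A_{i_0})\varphi^s(A_\jjj)$, and without quasimultiplicativity (or an invariant splitting making the potential effectively multiplicative) an insertion can collapse singular value products by an amount that is not controlled linearly in the insertion density, so no lower bound on $\lambda_{\mathsf{A}}(\varphi^s,\mu_\varepsilon)$ of the required form is available. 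Indeed, if this soft perturbation worked it would prove full support of $\varphi^s$-equilibrium states in every dimension, which is far stronger than anything obtained here; the paper instead derives full support case by case, by identifying the equilibrium states with norm-potential equilibrium states of auxiliary irreducible tuples (or with Gibbs-type states in the quasimultiplicative case), where full support is supplied by the cited theorems. So both the counting in the middle range and the full-support claim need the structural case analysis that your proposal leaves open.
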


As an application of this main result we are able to solve a folklore open question concerning a dimension drop on self-affine sets in dimension three. The question asks whether removing one of the defining affine maps results in a strict reduction of the Hausdorff dimension. For certain highly degenerate choices of the affine transformations $f_i$ it is possible to obtain counterexamples (for example, see Example \ref{ex:no-schmeling}), so the question is about the generic behaviour. During recent years the question has been propagated by Schmeling. There is some evidence that this result could be used to calculate the dimension of a solenoid; see Hasselblat and Schmeling \cite{HasselblattSchmeling2004}.

\begin{maintheorem} \label{thm:schmeling}
  Let $\mathsf{A} = (A_1,\ldots,A_N) \in GL_3(\R)^N$ be such that $P_{\mathsf{A}}(\fii^3) \le 0$ and $\|A_i\| < \tfrac{1}{2}$ for all $i \in \{ 1,\ldots,N \}$. If $E_{\mathsf{v}}' \subset E_{\mathsf{v}} \subset \R^3$ are nonempty compact sets satisfying
  \begin{equation*}
    E_{\mathsf{v}}' = \bigcup_{i=1}^{N-1} A_i(E_{\mathsf{v}}')+v_i \quad \text{and} \quad E_{\mathsf{v}} = \bigcup_{i=1}^N A_i(E_{\mathsf{v}})+v_i
  \end{equation*}
  for all $\mathsf{v} = (v_1,\ldots,v_N) \in (\R^3)^N$, then
  \begin{equation*}
    \dimh(E_{\mathsf{v}}') < \dimh(E_{\mathsf{v}})
  \end{equation*}
  for $\LL^{3N}$-almost all $\mathsf{v} \in \R^{3N}$.
\end{maintheorem}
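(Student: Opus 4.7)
The plan is to reduce Theorem~\ref{thm:schmeling} to the strict monotonicity of affinity dimension under removal of a single defining matrix, and then to deduce this strict monotonicity from the full support assertion of Theorem~\ref{thm:3d}. Write $\mathsf{A}' = (A_1,\ldots,A_{N-1})$ and let $s^*$, $s^{**}$ denote the affinity dimensions of $\mathsf{A}$ and $\mathsf{A}'$ respectively. Since $\|A_i\|<\tfrac{1}{2}$ for every $i$, applying Falconer's theorem to each tuple (together with Fubini in the $v_N$-coordinate, on which $E_{\mathsf{v}}'$ does not depend) gives $\dimh(E_{\mathsf{v}})=s^*$ and $\dimh(E_{\mathsf{v}}')=s^{**}$ simultaneously for $\LL^{3N}$-almost every $\mathsf{v}\in\R^{3N}$. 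It therefore suffices to establish the strict inequality $s^{**}<s^*$.

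For this reduction step, note first that $P_{\mathsf{A}}(\fii^0)=\log N>0$ combined with the hypothesis $P_{\mathsf{A}}(\fii^3)\le 0$ forces $s^*\in(0,3]$ with $P_{\mathsf{A}}(\fii^{s^*})=0$, and analogously $s^{**}\in(0,3]$; by the strict monotonicity of $s\mapsto P_{\mathsf{A}'}(\fii^s)$, the desired inequality $s^{**}<s^*$ is equivalent to the strict pressure drop $P_{\mathsf{A}'}(\fii^{s^*})<P_{\mathsf{A}}(\fii^{s^*})=0$. The boundary case $s^*=3$ can be handled directly without appeal to Theorem~\ref{thm:3d}: there $\fii^3=|\det(\cdot)|$ is multiplicative, so $P_{\mathsf{A}}(\fii^3)=\log\sum_{i=1}^N|\det A_i|=0$ forces $\sum_{i=1}^N|\det A_i|=1$, and invertibility of $A_N$ then yields $P_{\mathsf{A}'}(\fii^3)=\log(1-|\det A_N|)<0$. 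In the remaining case $0<s^*<3$ I would argue by contradiction: assuming $P_{\mathsf{A}'}(\fii^{s^*})=P_{\mathsf{A}}(\fii^{s^*})$, pick an ergodic $\fii^{s^*}$-equilibrium state $\mu'$ for $\mathsf{A}'$, whose existence is guaranteed by the variational principle recalled in \S\ref{se:sec2}. Viewing $\mu'$ as an ergodic shift-invariant Borel probability measure $\tilde\mu$ on $\{1,\ldots,N\}^{\N}$ carried by the proper subshift $\{1,\ldots,N-1\}^{\N}$, entropy and the Lyapunov functional are unchanged, so
\[
h(\tilde\mu)+\lambda_{\mathsf{A}}(\fii^{s^*},\tilde\mu)=h(\mu')+\lambda_{\mathsf{A}'}(\fii^{s^*},\mu')=P_{\mathsf{A}'}(\fii^{s^*})=P_{\mathsf{A}}(\fii^{s^*}),
\]
exhibiting $\tilde\mu$ as an ergodic $\fii^{s^*}$-equilibrium state of $\mathsf{A}$. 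By Theorem~\ref{thm:3d} any such measure must be fully supported on $\{1,\ldots,N\}^{\N}$, contradicting $\tilde\mu([N])=0$; this forces the strict inequality and completes the argument.

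Once Theorem~\ref{thm:3d} is available, this reduction is essentially formal, and the only genuine obstacle in the present theorem is Theorem~\ref{thm:3d} itself, specifically its conclusion that every ergodic equilibrium state at an interior parameter is fully supported. A minor delicacy within the reduction is that the boundary value $s^*=3$ lies outside the parameter range of Theorem~\ref{thm:3d} and must be handled by the separate determinant computation above; this is also the reason the hypothesis $P_{\mathsf{A}}(\fii^3)\le 0$ is tolerated rather than strict inequality assumed.
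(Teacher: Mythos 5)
Your proposal is correct and follows essentially the same route as the paper: the core point in both is that Theorem~\ref{thm:3d} forces every ergodic $\fii^{s}$-equilibrium state of $\mathsf{A}$ to be fully supported, so no shift-invariant measure concentrated on $\{1,\ldots,N-1\}^{\N}$ can attain the supremum defining $P_{\mathsf{A}}(\fii^{s})$, giving the strict pressure drop $P_{\mathsf{A}'}(\fii^{s}) < P_{\mathsf{A}}(\fii^{s})$ and hence $\dimaff(\mathsf{A}')<\dimaff(\mathsf{A})$, after which Falconer's theorem (plus the Fubini observation you make) finishes the argument. The paper packages this as Proposition~\ref{thm:pressure-drop} and argues directly rather than by contradiction, but that is a cosmetic difference; your explicit treatment of the endpoint $s^{*}=3$ via the determinant identity $P_{\mathsf{A}}(\fii^3)=\log\sum_i|\det A_i|$ is a clean way to handle the parameter value that falls outside the range of Theorem~\ref{thm:3d}, where the paper instead implicitly relies on Theorem~\ref{thm:FeKa2} for full support at integer $s$.
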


Theorem \ref{thm:schmeling} is proved in \S \ref{sec:affinity} and the proof is actually a simple consequence of the aforementioned variational principle and the fact that all the $\fii^s$-equilibrium states are fully supported. The proof of Theorem \ref{thm:3d}, on the other hand, is more involved. This proof splits into three subcases all of which are proved by using different methods. Observe that, by Feng and K\"aenm\"aki \cite[Theorem 1.7]{FengKaenmaki2011}, we may assume that $1<s<2$. The following result is based on an investigation of the Zariski-closed semigroup generated by $A_1,\ldots,A_N$ and is proved in \S \ref{sec:alg}. For the definitions of irreducible and strongly irreducible matrix tuples the reader is referred to \S \ref{sec:irred}.

\begin{maintheorem}\label{thm:intro-alg}
  Let $k \in \{ 0,\ldots,d-1 \}$, $k<s<k+1$, and $\mathsf{A}\in GL_d(\mathbb{R})^N$. If $\mathsf{A}^{\wedge k}$ and $\mathsf{A}^{\wedge(k+1)}$ are both irreducible, and one of them is strongly irreducible, then there exists a unique $\varphi^s$-equilibrium state of $\mathsf{A}$ and it is fully supported.
\end{maintheorem}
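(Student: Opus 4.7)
The plan is to exploit the factorization
\[
  \varphi^s(A) = \|A^{\wedge k}\|^{k+1-s}\,\|A^{\wedge(k+1)}\|^{s-k},
\]
valid in the range $k<s<k+1$ because $\|A^{\wedge j}\| = \alpha_1(A)\cdots\alpha_j(A)$ and $\alpha_{k+1}(A) = \|A^{\wedge(k+1)}\|/\|A^{\wedge k}\|$. This presents $\varphi^s$ as a product of two submultiplicative potentials depending only on the exterior powers $\mathsf{A}^{\wedge k}$ and $\mathsf{A}^{\wedge(k+1)}$, each of which is irreducible by hypothesis. For an irreducible tuple, the potential $\|\cdot\|^t$ on the corresponding exterior power is already known to admit a unique, fully supported equilibrium state enjoying a Gibbs property (Feng--K\"aenm\"aki \cite{FengKaenmaki2011}). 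The task is to lift these two separate facts into a joint statement for the mixed potential $\varphi^s$, which amounts to proving a suitable quasi-multiplicativity.

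The decisive step is to establish that $\varphi^s$ is quasi-multiplicative on $\mathsf{A}$: there exist $c>0$ and $n_0\in\N$ such that for every pair of finite words $\mathbf{i},\mathbf{j}$ there is a word $\mathbf{m}$ with $|\mathbf{m}|\le n_0$ and
\[
  \varphi^s(A_{\mathbf{i}}A_{\mathbf{m}}A_{\mathbf{j}}) \geq c\,\varphi^s(A_{\mathbf{i}})\,\varphi^s(A_{\mathbf{j}}).
\]
The inequality $\|B^{\wedge j}C^{\wedge j}\|\geq c_j\|B^{\wedge j}\|\,\|C^{\wedge j}\|$ holds precisely when the dominant right singular $j$-plane of $B^{\wedge j}$ is transverse, in the Grassmannian $G(j,d)$, to the least expanded directions of $C^{\wedge j}$. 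Irreducibility of $\mathsf{A}^{\wedge k}$ alone produces, by a standard argument, a finite family of bridging words that achieves this transversality uniformly over all configurations at level $k$; and likewise at level $k+1$. However, the single word $\mathbf{m}$ must perform both adjustments \emph{simultaneously}. The plan is to use the strong irreducibility hypothesis at one of the two levels to rule out invariant finite unions of proper subvarieties in the relevant product of Grassmannians, and then to carry out a compactness argument on $G(k,d)\times G(k+1,d)$ yielding a finite family of words in the semigroup generated by $\mathsf{A}$ which realises the simultaneous transversality for every pair of configurations.

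Once $\varphi^s$ is shown to be quasi-multiplicative, the existence and uniqueness of the ergodic $\varphi^s$-equilibrium state together with a Gibbs property follows from the standard thermodynamic formalism for quasi-multiplicative submultiplicative potentials developed in \cite{FengKaenmaki2011}: there is $K\geq 1$ such that
\[
  K^{-1}\leq\frac{\mu([\mathbf{i}])}{\varphi^s(A_{\mathbf{i}})\exp\bigl(-|\mathbf{i}|P_{\mathsf{A}}(\varphi^s)\bigr)}\leq K
\]
for every finite word $\mathbf{i}$. Since each $A_i$ lies in $GL_d(\R)$, the quantity $\varphi^s(A_{\mathbf{i}})$ is strictly positive for every $\mathbf{i}$, so the Gibbs bound forces every cylinder set to carry positive $\mu$-mass. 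Consequently $\mu$ is fully supported, completing the proof.

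The main obstacle will be the joint quasi-multiplicativity: the existence of a single bridging word which is effective at both exterior levels at once is strictly stronger than the two individual quasi-multiplicativities, and it is precisely here that the asymmetric hypothesis --- irreducibility at both levels together with strong irreducibility at one of them --- is used in a non-redundant way. Securing uniformity of the bridging estimate over \emph{all} pairs of Grassmannian configurations, rather than merely over a generic set, is the real technical challenge of the argument.
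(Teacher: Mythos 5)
Your high-level plan is correct and matches the paper's: rewrite $\varphi^s$ using the factorization into exterior-power norms, reduce the uniqueness and full-support claims to quasi-multiplicativity of $\varphi^s$ (which the paper establishes is sufficient via Lemma~\ref{thm:irreducibility} and Theorem~\ref{thm:unique-eq}), and then observe that the whole weight of the hypothesis rests on producing a \emph{single} bridging word that works at levels $k$ and $k+1$ simultaneously. You also correctly diagnose that separate irreducibility at each level is not enough for this, and that the strong irreducibility is used in a non-redundant way exactly here.

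However, there is a genuine gap at the key step. The compactness argument (this is the paper's Lemma~\ref{thm:irreducibility}) only reduces the failure of joint quasi-multiplicativity to the existence of fixed nonzero $v_k,w_k\in\wedge^k\R^d$ and $v_{k+1},w_{k+1}\in\wedge^{k+1}\R^d$ with
\[
  \la v_k, A^{\wedge k}w_k\ra_k\,\la v_{k+1}, A^{\wedge(k+1)}w_{k+1}\ra_{k+1}=0
  \quad\text{for every $A$ in the semigroup.}
\]
You then assert that strong irreducibility at one level ``rules out invariant finite unions of proper subvarieties in the relevant product of Grassmannians,'' but strong irreducibility only forbids invariant finite unions of \emph{linear subspaces} at the single exterior level where it is hypothesised; it does not obviously control joint obstructions living on $G(k,d)\times G(k+1,d)$, and you give no mechanism for transferring the joint obstruction into a one-level one. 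This transfer is precisely the content of the paper's Proposition~\ref{th:alg}, and the tools it needs are substantial: first (Lemma~\ref{le:lie}) that the Zariski closure $G$ of the semigroup is a Lie group with finitely many connected components, and then (Lemma~\ref{le:reps}) a separation lemma asserting that if a product of two real-analytic linear functionals of irreducible representations of a Lie group vanishes identically on $G$, then each factor must vanish on an entire connected component. That lemma is what splits the single joint vanishing condition into two separate obstructions, one at each level; together with the finiteness of components it manufactures a finite $G$-invariant union of proper subspaces at \emph{both} levels simultaneously, contradicting strong irreducibility wherever it is assumed. Your ``compactness argument on $G(k,d)\times G(k+1,d)$'' does not supply a substitute for this algebraic-geometric and analytic input, so as written the proposal does not close the gap it correctly identifies.
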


The requirement that $\mathsf{A}^{\wedge k}$ or $\mathsf{A}^{\wedge(k+1)}$ be strongly irreducible cannot be substantially reduced; see Example \ref{ex:irred} below.  It is worthwhile to note that the $\fii^s$-equilibrium state in the above theorem satifies a certain Gibbs property; see Remark \ref{rem:gibbs}.

We will see that in the three-dimensional case, if $\mathsf{A}$ is strongly irreducible then Theorem \ref{thm:intro-alg} can be applied to guarantee the existence of a unique $\varphi^s$-equilibrium state for all $1<s<2$. In the case where $\mathsf{A}$ is irreducible but not strongly irreducible we show in Proposition \ref{thm:3dim-permutation} that $\mathsf{A}$ is a tuple of generalized permutation matrices in some basis.  A matrix $A \in GL_d(\R)$ is a \emph{generalised permutation matrix} if every row and every column of $A$ has exactly one nonzero entry. Note that $A$ is a generalised permutation matrix if and only if it permutes the coordinate axes of $\mathbb{R}^d$. Let $P_d(\R) \subset GL_d(\mathbb{R})$ be the group of generalised permutation matrices. For this kind of tuple the structure of $\fii^s$-equilibrium states is described by the following theorem.

\begin{maintheorem}\label{thm:intro-perm}
  Let $k \in \{ 0,\ldots,d-1 \}$, $k < s < k+1$, and $\mathsf{A} \in P_d(\mathbb{R})^N$. Then the maximum possible number of distinct ergodic $\varphi^s$-equilibrium states of $\mathsf{A}$ is $(d-k)\binom{d}{k}$ and every equilibrium state is fully supported.
\end{maintheorem}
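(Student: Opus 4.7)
My plan is as follows. Since $\mathsf{A}\in P_d(\R)^N$, writing $A_i e_r = c_i(r) e_{\sigma_i(r)}$ with $c_i(r) \in \R\setminus\{0\}$ and $\sigma_i\in S_d$, every product $A_{i_1}\cdots A_{i_n}$ is again a generalised permutation matrix whose $d$ singular values are precisely the absolute values $|\gamma_r^{(n)}(\mathbf i)|$ of its nonzero entries; each $\log|\gamma_r^{(n)}|$ is, after a standard reindexing, a Birkhoff sum of the potential $F(\mathbf i, r) = \log|c_{i_1}(r)|$ on the skew product $T(\mathbf i, r) = (\sigma\mathbf i, \sigma_{i_1}(r))$ over $\Sigma_N \times \{1,\ldots,d\}$. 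The singular value function then decomposes pointwise as
\[
  \log\varphi^s(A_{i_1}\cdots A_{i_n}) = \max_{(S,j)\in\mathcal I}\Bigl(\sum_{r\in S}\log|\gamma_r^{(n)}| + (s-k)\log|\gamma_j^{(n)}|\Bigr),
\]
where $\mathcal I = \{(S,j) : S\subset\{1,\ldots,d\},\ |S| = k,\ j\notin S\}$ has cardinality exactly $(d-k)\binom{d}{k}$, and this combinatorial bound is the source of the upper bound in the theorem.

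For each $(S,j)\in\mathcal I$, I would consider the orbit $\Omega_{S,j}$ of $(S,j)$ under the diagonal action of $G = \langle\sigma_1,\ldots,\sigma_N\rangle$ and regard $\Sigma_N \times \Omega_{S,j}$ as a topologically transitive subshift of finite type encoding the skew-product dynamics. The locally constant (hence H\"older) potential
\[
  \Phi_{S,j}(\mathbf i, (S', j')) = \sum_{r\in S'}\log|c_{i_1}(r)| + (s-k)\log|c_{i_1}(j')|
\]
admits by classical Ruelle--Bowen theory a unique equilibrium state $\tilde\mu_{S,j}$ with the Gibbs property; its projection $\mu_{S,j}$ onto $\Sigma_N$ is shift-invariant and fully supported. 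Combining the pointwise decomposition of $\log\varphi^s$ with the elementary bounds $\max \le \sum \le |\mathcal I|\max$ I would prove the pressure identity $P_{\mathsf{A}}(\varphi^s) = \max_{(S,j)\in\mathcal I} P(\Phi_{S,j})$. Any ergodic $\varphi^s$-equilibrium state $\mu$ must therefore saturate some $P(\Phi_{S_0,j_0})$; exploiting ergodicity to make the pointwise maximiser asymptotically constant and the uniqueness of the equilibrium state of each $\Phi_{S,j}$, I would force $\mu = \mu_{S_0,j_0}$. This yields at most $(d-k)\binom{d}{k}$ distinct ergodic equilibrium states of $\varphi^s$, each fully supported.

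To show this bound is sharp, I would construct the symmetric example $N = d$, $A_i = \mathrm{diag}(1,\ldots,\lambda,\ldots,1)$ with the entry $\lambda > 1$ in position $i$. All $\sigma_i$ are then the identity, each orbit $\Omega_{S,j} = \{(S,j)\}$ is a singleton, and $\Phi_{S,j}$ reduces to $(\log\lambda)\mathbf 1_{i_1\in S} + (s-k)(\log\lambda)\mathbf 1_{i_1 = j}$. By the coordinate-permutation symmetry of the tuple, all $(d-k)\binom{d}{k}$ pressures $P(\Phi_{S,j}) = \log(k\lambda + \lambda^{s-k} + (d-k-1))$ coincide, and the corresponding Bernoulli equilibrium states $\mu_{S,j}$, with weights proportional to $\lambda$ at positions in $S$, to $\lambda^{s-k}$ at $j$, and to $1$ elsewhere, are pairwise distinct because $\lambda$, $\lambda^{s-k}$, and $1$ are distinct.

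The main technical obstacle is the core step of the second paragraph: establishing the pressure identity and the asymptotic stabilisation of the optimal pair for any ergodic $\varphi^s$-equilibrium state. The inequality $P_{\mathsf{A}}(\varphi^s)\ge P(\Phi_{S,j})$ is immediate from the pointwise inequality $\varphi^s(A_{i_1}\cdots A_{i_n})\ge \exp(\Phi_{S,j}^n)$ at a suitable starting fibre; the reverse inequality uses the coarse bound $\varphi^s \le |\mathcal I|\max_{(S,j)}\exp\Phi_{S,j}^n$. The delicate part is to couple these with the uniqueness of the equilibrium state of each $\Phi_{S,j}$ in order to pin down the ergodic $\varphi^s$-equilibrium states uniquely.
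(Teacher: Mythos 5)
Your proposal is correct, and its combinatorial core coincides with the paper's: your index set of pairs $(S,j)$ with weight $\prod_{r\in S}|c_{i}(r)|\cdot|c_{i}(j)|^{s-k}$, permuted diagonally by the $\sigma_i$, is exactly the paper's auxiliary basis $e_{S,j}$ of $\R^{d'}$, $d'=(d-k)\binom{d}{k}$, and your pointwise identity $\log\varphi^s(A_{\iii})=\max_{(S,j)}(\cdots)$ is precisely the statement $\varphi^s(A_\iii)=\|\mathfrak{h}_s(A_\iii)\|$ for the homomorphism $\mathfrak{h}_s\colon P_d(\R)\to P_{d'}(\R)$; your sharpness example is the paper's (diagonal matrices with a single entry $\lambda>1$, respectively $2$, on the diagonal), with the same pressure computation and the same separation of Bernoulli weights. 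Where you genuinely diverge is the second half: the paper simply invokes the Feng--K\"aenm\"aki theorem on $\|\cdot\|$-equilibrium states of an arbitrary $d'$-dimensional tuple (at most $d'$ ergodic ones, all fully supported), whereas you re-derive the needed special case by hand, via Bowen--Ruelle theory for the locally constant potential $\Phi$ on the finite skew-product extension $\Sigma\times\mathcal{I}$, whose transitive components are the group orbits $\Omega_{S,j}$, followed by a lifting argument. This buys self-containedness and an explicit Gibbs description of each candidate equilibrium state, at the cost of two points you must nail down carefully: (i) the word-order issue -- Birkhoff sums of your skew product encode $A_{i_n}\cdots A_{i_1}$, not $A_{i_1}\cdots A_{i_n}$, so you need either the measure-reversal bijection $\mu\mapsto\overleftarrow{\mu}$ (which preserves invariance, entropy, ergodicity and full support, hence the count) or to run the construction with the transposed tuple; and (ii) the "asymptotically constant maximiser" step, which is cleanest phrased as: any ergodic $\varphi^s$-equilibrium state $\mu$ admits an ergodic lift $\tilde\mu$ with $\int\Phi\,d\tilde\mu=\lambda_{\mathsf{A}}(\fii^s,\mu)$ and $h(\tilde\mu)=h(\mu)$ (the extension is finite-to-one), so $\tilde\mu$ is an equilibrium state of $\Phi$, hence the unique Gibbs state on a maximal-pressure component, and $\mu$ is its projection. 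With those two points made precise your argument is a complete, somewhat more elementary alternative to the paper's appeal to Theorem \ref{thm:FeKa}.
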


We prove Theorem \ref{thm:intro-perm} in \S \ref{sec:permutation} and its proof is based on finding an appropriate higher-dimensional auxiliary matrix tuple to which we can apply the theorem of Feng and K\"aenm\"aki \cite[Theorem 1.7]{FengKaenmaki2011}. The remaining case to investigate in the proof of Theorem \ref{thm:3d} is the reducible matrix tuples. In this case, the matrices are block-upper triangular in some basis and hence, the following theorem together with Proposition \ref{pr:3d-final} settles the proof.

\begin{maintheorem}\label{thm:intro-block}
  Let $\mathsf{A}=(A_1,\ldots,A_N) \in GL_d(\mathbb{R})^N$ and $0 <s< d$. If there exist integers $d_1,\ldots,d_\ell$ and real matrices $A_i^{(j,k)}$ such that $\sum_{i=1}^\ell d_i=d$ and
  \[
    A_i =
    \begin{pmatrix}
      A_i^{(1,1)} & A_i^{(1,2)} & \cdots & A_i^{(1,\ell)} \\
      0           & A_i^{(2,2)} & \cdots & A_i^{(2,\ell)} \\
      \vdots      & \vdots      & \ddots & \vdots \\
      0           & 0           & \cdots & A_i^{(\ell,\ell)}
    \end{pmatrix}
  \]
  for all $i \in \{ 1,\ldots,N\}$, where each matrix $A_i^{(j,k)}$ has dimension $d_j\times d_k$, then the set of $\varphi^s$-equilibrium states of $\mathsf{A}$ is precisely the set of $\varphi^s$-equilibrium states of $\mathsf{A}' = (A_1',\ldots, A_N') \in GL_d(\mathbb{R})^N$ defined by
  \[
    A_i' = 
    \begin{pmatrix}
      A_i^{(1,1)} & 0           & \cdots & 0 \\
      0           & A_i^{(2,2)} & \cdots & 0 \\
      \vdots      & \vdots      & \ddots & \vdots \\
      0           & 0           & \cdots & A_i^{(\ell,\ell)}
    \end{pmatrix}
  \]
  for all $i \in \{ 1,\ldots,N\}$.
\end{maintheorem}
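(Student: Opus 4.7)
The plan is to prove that $h(\mu)+\lambda_{\mathsf{A}}(\varphi^s,\mu) = h(\mu)+\lambda_{\mathsf{A}'}(\varphi^s,\mu)$ for every shift-invariant Borel probability measure $\mu$ on $\{1,\ldots,N\}^{\mathbb{N}}$. The variational characterisation of $P_{\mathsf{A}}(\varphi^s)$ then forces $P_{\mathsf{A}}(\varphi^s)=P_{\mathsf{A}'}(\varphi^s)$, and since $\varphi^s$-equilibrium states are by definition the ergodic measures attaining the supremum of the functional $\mu\mapsto h(\mu)+\lambda(\varphi^s,\mu)$, the two sets of equilibrium states must coincide.

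Because $\lambda_{\mathsf{A}}(\varphi^s,\mu)$ can be recovered from its values on the ergodic components of $\mu$ via the ergodic decomposition (the defining integrals $\tfrac{1}{n}\int \log\varphi^s(A_{i_1}\cdots A_{i_n})\,\mathrm{d}\mu$ are affine in $\mu$, and the limit preserves this), it is enough to establish the identity for ergodic $\mu$. For such a $\mu$ the multiplicative ergodic theorem produces Lyapunov exponents $\chi_1(\mathsf{A},\mu)\ge \cdots \ge \chi_d(\mathsf{A},\mu)$ describing the exponential growth of singular values, and a direct computation from the definition of $\varphi^s$ gives
\[
  \lambda_{\mathsf{A}}(\varphi^s,\mu) = \sum_{j=1}^{\lfloor s\rfloor}\chi_j(\mathsf{A},\mu) + (s-\lfloor s\rfloor)\chi_{\lceil s\rceil}(\mathsf{A},\mu),
\]
with the analogous formula holding for $\mathsf{A}'$. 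The problem therefore reduces to showing that $\mathsf{A}$ and $\mathsf{A}'$ have the same Lyapunov spectrum as a multiset.

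I would obtain this from the classical principle that the Lyapunov spectrum of a block-upper triangular linear cocycle equals, counted with multiplicity, the disjoint union of the Lyapunov spectra of its diagonal blocks. Concretely, the flag $\mathbb{R}^d = V_1\supsetneq V_2\supsetneq\cdots\supsetneq V_\ell \supsetneq V_{\ell+1}=\{0\}$ with $V_j = \{0\}^{d_1+\cdots+d_{j-1}}\oplus \mathbb{R}^{d_j}\oplus\cdots\oplus \mathbb{R}^{d_\ell}$ is preserved by every $A_i$, and the quotient action on $V_j/V_{j+1}$ is precisely $A_i^{(j,j)}$. Refining the Oseledets splitting of $\mathsf{A}$ so that it is subordinate to this flag identifies the Lyapunov exponents of $\mathsf{A}$ with the multiset union of those of $\mathsf{A}^{(j,j)}=(A_1^{(j,j)},\ldots,A_N^{(j,j)})$ for $j=1,\ldots,\ell$; but that multiset is also the Lyapunov spectrum of the direct sum $\mathsf{A}'$.

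The substantive step is the ``upper bound'' half of this Lyapunov decomposition: every exponent of each quotient cocycle $\mathsf{A}^{(j,j)}$ manifestly appears in the spectrum of $\mathsf{A}$ (any vector in $V_j$ realising that exponent modulo $V_{j+1}$ grows at least as fast as its projection), but ruling out spurious exponents coming from the off-diagonal blocks $A_i^{(j,k)}$, $j<k$, is where the real work lies. By induction on $\ell$ the problem reduces to $\ell=2$, in which the upper-right block of $A_{i_1}\cdots A_{i_n}$ is the telescoping sum
\[
  \sum_{m=1}^n A_{i_1}^{(1,1)}\cdots A_{i_{m-1}}^{(1,1)} A_{i_m}^{(1,2)} A_{i_{m+1}}^{(2,2)}\cdots A_{i_n}^{(2,2)},
\]
whose norm, by standard subadditive ergodic estimates, grows at most at the rate $\max\{\chi_1(\mathsf{A}^{(1,1)},\mu),\chi_1(\mathsf{A}^{(2,2)},\mu)\}$; this is precisely what prevents any additional Lyapunov exponent from entering the spectrum of $\mathsf{A}$ beyond those already present in the two diagonal blocks.
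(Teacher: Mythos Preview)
Your strategy---showing $\lambda_{\mathsf{A}}(\varphi^s,\mu)=\lambda_{\mathsf{A}'}(\varphi^s,\mu)$ for every ergodic $\mu$ by identifying the Lyapunov spectra of the two cocycles---is valid in principle and genuinely different from the paper's argument. The paper never computes Lyapunov exponents: it first proves the pointwise inequality $\varphi^s(A)\ge\varphi^s(A')$ for any block-upper-triangular $A$ and its block-diagonal part $A'$, which immediately gives $\lambda_{\mathsf{A}}\ge\lambda_{\mathsf{A}'}$ and one inclusion of equilibrium states; for the reverse direction it conjugates each $A_i$ by $\mathrm{diag}(\varepsilon^{1/2}I,\varepsilon^{-1/2}I)$, which scales the off-diagonal block to $\varepsilon C_i$ without changing pressure or equilibrium states, then lets $\varepsilon\to 0$ and combines upper semicontinuity of $\lambda$ with the Feng--Shmerkin continuity of pressure. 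Your route trades the pressure-continuity theorem for Oseledets; the paper makes the opposite trade.

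That said, your execution has two real gaps. First, the flag you wrote is not invariant: for block-\emph{upper}-triangular $A_i$ the subspace $V_2=\{0\}^{d_1}\oplus\mathbb{R}^{d-d_1}$ is mapped out by the off-diagonal blocks. The invariant flag runs the other way, $W_j=\mathbb{R}^{d_1+\cdots+d_j}\oplus\{0\}$, with $A_i^{(j,j)}$ acting on $W_j/W_{j-1}$. Second, and more substantively, your ``no spurious exponents'' argument only bounds the \emph{norm} of the off-diagonal block, which at best controls $\chi_1(\mathsf{A})$; it says nothing about $\chi_2,\ldots,\chi_d$, and you need the full multiset equality of spectra (knowing that the spectrum of $B$ is contained in that of $A$, that the top exponents match, and that the determinants agree does not pin down the remaining exponents). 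Even the top-exponent bound is less immediate than you suggest: controlling $\|D_{i_{m+1}}\cdots D_{i_n}\|$ uniformly over $m$ is not a direct consequence of the subadditive ergodic theorem. The result you are invoking---that the Lyapunov spectrum of a cocycle with an invariant subspace $W$ is the multiset union of the spectra on $W$ and on the quotient---is classical and citable, but it genuinely requires the Oseledets filtration (or an exterior-power argument applied at every level $k$), not just the telescoping formula for the off-diagonal block.
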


Theorem \ref{thm:intro-block} is proved in \S \ref{sec:block-upper} and it substantially generalises the theorem of Falconer and Miao \cite[Theorem 2.5]{FalconerMiao2007} which treated the upper triangular as opposed to block-upper triangular case. Although Theorems \ref{thm:intro-alg}--\ref{thm:intro-block} are stated in arbitrary dimension, together they completely describe $\fii^s$-equilibrium states only in dimension three. Some obstacles to our understanding of the higher-dimensional case are discussed in detail in  \S \ref{sec:remarks-high} below.

The remainder of the article is structured as follows. In the following section we introduce some preliminary facts and lemmas common to the proofs of Theorems \ref{thm:3d} to \ref{thm:intro-block}. We then proceed to prove Theorems \ref{thm:intro-alg}, \ref{thm:intro-perm}, and \ref{thm:intro-block} before combining these results in the proof of Theorem \ref{thm:3d}. In the penultimate section of this article we prove Theorem \ref{thm:schmeling}. In the final section, we present examples to show that the strong irreducibility criterion of Theorem \ref{thm:intro-alg} cannot be removed, that the conclusion of Theorem \ref{thm:schmeling} can fail for certain degenerate choices of vector $\mathsf{v}$, and that that theorem can also fail when the affinities $f_i$ are allowed to be non-invertible.


\section{Preliminaries}

\subsection{Set of infinite words}
Fix $N \in \N$ such that $N \ge 2$ and equip the set of all infinite words $\Sigma = \{ 1,\ldots,N \}^\N$ with the usual ultrametric: the distance between two different words is defined to be $2^{-n}$, where $n$ is the first place at which the words differ. It is straightforward to see that $\Sigma$ is compact. The \emph{left shift} is a continuous map $\sigma \colon \Sigma \to \Sigma$ defined by setting $\sigma(\iii) = i_2 i_3 \cdots$ for all $\iii = i_1 i_2 \cdots \in \Sigma$.

Let $\Sigma_*$ be the free monoid on $\{ 1,\ldots,N \}$. The concatenation of two words $\iii \in \Sigma_*$ and $\jjj \in \Sigma_* \cup \Sigma$ is denoted by $\iii\jjj \in \Sigma_* \cup \Sigma$. The set $\Sigma_*$ is the set of all finite words $\{ \varnothing \} \cup \bigcup_{n \in \N} \Sigma_n$, where $\Sigma_n = \{ 1,\ldots,N \}^n$ for all $n \in \N$ and $\varnothing$ satisfies $\varnothing\iii = \iii\varnothing = \iii$ for all $\iii \in \Sigma_*$. For notational convenience, we set $\Sigma_0 = \{ \varnothing \}$. The word $i_2 \cdots i_n \in \Sigma_{n-1}$ is denoted by $\sigma(\iii)$ for all $n \in \N$ and $\iii = i_1 \cdots i_n \in \Sigma_n$.

The length of $\iii \in \Sigma_* \cup \Sigma$ is denoted by $|\iii|$. If $\iii \in \Sigma_*$, then we set
$
  [\iii] = \{ \iii\jjj \in \Sigma : \jjj \in \Sigma \}
$
and call it a \emph{cylinder set}. If $\jjj \in \Sigma_* \cup \Sigma$ and $1 \le n < |\jjj|$, we define $\jjj|_n$ to be the unique word $\iii \in \Sigma_n$ for which $\jjj \in [\iii]$. If $\jjj \in \Sigma_*$ and $n \ge |\jjj|$, then $\jjj|_n = \jjj$.

\subsection{Multilinear algebra} \label{sec:multilinear}
We recall some basic facts about the exterior algebra.
Let $\{ e_1,\ldots,e_d \}$ be the standard orthonormal basis of $\R^d$ and define
\begin{equation*}
  \wedge^k \R^d = \linspan\{ e_{i_1} \wedge \cdots \wedge e_{i_k} : 1 \le i_1 < \cdots < i_k \le d \}
\end{equation*}
for all $k \in \{ 1,\ldots,d \}$ with the convention that $\wedge^0\R^d=\mathbb{R}$. Recall that the wedge product $\wedge \colon \wedge^k\R^d \times \wedge^j\R^d \to \wedge^{k+j}\R^d$ is an associative and bilinear operator, anticommutative on the elements of $\R^d$.
This means that
\begin{equation} \label{eq:basicfact2}
  v \wedge w = (-1)^{kj} w \wedge v
\end{equation}
for all $v \in \wedge^k\R^d$ and $w \in \wedge^j\R^d$.
If $v \in \wedge^k\R^d$ can be expressed as a wedge product of $k$ vectors of $\R^d$, then $v$ is said to be \emph{decomposable}. Observe that e.g.\ $e_1 \wedge e_2 + e_3 \wedge e_4 \in \wedge^2\R^4$ is not decomposable.
The \emph{Hodge star operator} $* \colon \wedge^k\R^d \to \wedge^{d-k}\R^d$ is defined to be the bijective linear map satisfying
\begin{equation*}
  *(e_{i_1} \wedge \cdots \wedge e_{i_k}) = \sgn(i_1,\ldots,i_d) e_{i_{k+1}} \wedge \cdots \wedge e_{i_d}
\end{equation*}
for all $1 \le i_1 < \cdots < i_k \le d$, where $1 \le i_{k+1} < \cdots < i_d \le d$ are such that $\{ i_{k+1},\ldots,i_d \} = \{ 1,\ldots,d \} \setminus \{ i_1,\ldots,i_k \}$, and $\sgn(i_1,\ldots,i_d)=1$ if $(i_1,\ldots,i_d)$ is an even permutation of $\{ 1,\ldots,d \}$ and $\sgn(i_1,\ldots,i_d)=-1$ otherwise.
It is straightforward to see that
\begin{equation} \label{eq:basicfact1}
  *(*v) = (-1)^{k(d-k)} v
\end{equation}
for all $v \in \wedge^k\R^d$.

The group of $d \times d$ invertible matrices of real numbers is denoted by $GL_d(\R)$. This space has a topology induced from $\R^{d^2}$. If $A \in GL_d(\R)$, we define an invertible linear map $A^{\wedge k} \colon \wedge^k\R^d \to \wedge^k\R^d$ by setting
\[
(A^{\wedge k})(e_{i_1} \wedge \cdots \wedge e_{i_k}) = Ae_{i_1} \wedge \cdots \wedge Ae_{i_k}
\]
and extending by linearity. Observe that $A^{\wedge k}$ can be represented by a $\binom{d}{k} \times \binom{d}{k}$ matrix whose entries are the $k \times k$ minors of $A$. Using this and standard properties of determinants, it may be shown that
\begin{equation} \label{eq:morphism}
  (AB)^{\wedge k} = (A^{\wedge k})(B^{\wedge k}),
\end{equation}
i.e.\ $A \mapsto A^{\wedge k}$ is a morphism between the corresponding multiplicative linear groups. Furthermore, if $\alpha_1(A) \ge \cdots \ge \alpha_d(A) > 0$ are the singular values of $A$, that is, the square roots of the eigenvalues of the positive definite matrix $A^TA$, where $A^T$ is the transpose of $A$, then the products $\alpha_{i_1}(A) \cdots \alpha_{i_k}(A)$ are the singular values of $A^{\wedge k}$, for each $1 \le i_1 < \cdots < i_k \le d$. 
Furthermore, it is straightforward to see that
\begin{equation} \label{eq:basicfact4}
  *(A^{\wedge k}w) = A^{\wedge(d-k)}(*w)
\end{equation} 
for all $w \in \wedge^k\R^d$.

The inner product on $\wedge^k\R^d$ is defined by setting
\begin{equation} \label{eq:inner}
  \langle v, w\rangle_k=*(v\wedge *w)
\end{equation}
for all $v,w \in \wedge^k\R^d$. Thus, by \eqref{eq:basicfact1} and \eqref{eq:basicfact2}, we have
\begin{equation} \label{eq:basicfact3}
\begin{split}
  *v \wedge *w &= \langle *v,w \rangle_{d-k} \, e_1 \wedge \cdots \wedge e_d = \langle w,*v \rangle_{d-k} \, e_1 \wedge \cdots \wedge e_d \\
  &= w \wedge *(*v) = w \wedge (-1)^{k(d-k)} v  = (-1)^{2k(d-k)}  v \wedge w = v \wedge w
\end{split}
\end{equation}
for all $v \in \wedge^k\R^d$ and $w \in \wedge^{d-k}\R^d$.
The norm is defined by setting $|v|_k = \langle v,v \rangle_k^{1/2}$ for all $v \in \wedge^k\R^d$. It follows that $|v_1 \wedge \cdots \wedge v_k|_k$ is the $k$-dimensional volume of the parallelepiped with the vectors $v_1,\ldots,v_k$ as sides. The operator norm of the induced linear mapping $A^{\wedge k}$ is
\begin{equation} \label{eq:operator_norm_for_wedge}
  \| A^{\wedge k} \|_k = \max\{ |A^{\wedge k}v|_k : |v|_k = 1 \} = \alpha_1(A) \cdots \alpha_k(A).
\end{equation}

\subsection{Irreducibility} \label{sec:irred}
Let $\mathcal{A}$ be a set of matrices in $GL_d(\R)$. We say that $\mathcal{A}$ is \emph{irreducible} if there is no proper nontrivial linear subspace $V$ of $\R^d$ such that $A(V) \subset V$ for all $A \in \mathcal{A}$; otherwise $\mathcal{A}$ is called \emph{reducible}. The set $\mathcal{A}$ is \emph{strongly irreducible} if there does not exist a set $F$ which is equal to a finite union of proper nontrivial linear subspaces of $\R^d$ and satisfies $A(F) \subset F$ for all $A \in \mathcal{A}$. Furthermore, a tuple $\mathsf{A} = (A_1,\ldots,A_N) \in GL_d(\R)^N$ is \emph{irreducible} (resp.\ \emph{strongly irreducible}) if the corresponding set $\{ A_1,\ldots,A_N \}$ is irreducible (resp.\ \emph{strongly irreducible}). If $\mathsf{A}^{\wedge k} = (A^{\wedge k}_1, \ldots, A^{\wedge k}_N)$ is irreducible (resp.\ strongly irreducible) for some $k \in \{ 0,\ldots,d \}$, then we say that $\mathsf{A}$ is \emph{$k$-irreducible} (resp.\ strongly $k$-irreducible). For each $n \in \N$ and $\iii = i_1 \cdots i_n \in \Sigma_n$ we write $A_\iii = A_{i_1} \cdots A_{i_n} \in GL_d(\R)$.

\begin{lemma} \label{thm:irr_equiv1}
  If $\mathsf{A} = (A_1,\ldots,A_N) \in GL_d(\R)^N$, then the following conditions are equivalent:
  \begin{enumerate}
    \item The tuple $\mathsf{A}$ is irreducible.
    \item For every $0\neq v,w \in \R^d$ there is $\iii \in \Sigma_*$ such that $\langle v,A_\iii w \rangle \ne 0$.
    \item For every $0 \ne w \in \R^d$ it holds that $\linspan(\{ A_\iii w : \iii \in \Sigma_* \}) = \R^d$.
    \item The set $\{ A_\iii : \iii \in \Sigma_* \}$ is irreducible.
  \end{enumerate}
\end{lemma}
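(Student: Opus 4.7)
The plan is to prove the cycle (1) $\Rightarrow$ (3) $\Rightarrow$ (2) $\Rightarrow$ (1) and then observe that (1) $\Leftrightarrow$ (4) by a separate, essentially tautological argument. None of the four conditions seem delicate, so I don't expect a real obstacle; the work is to write down the right invariant subspace in each implication.

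For (1) $\Rightarrow$ (3), fix $w \ne 0$ and let $V = \linspan\{A_\iii w : \iii \in \Sigma_*\}$. Using (either the convention $A_\varnothing = I$, or just noting that each $A_j w$ is nonzero since $A_j$ is invertible), $V$ is nonzero, and for each $j \in \{1,\ldots,N\}$ we have $A_j A_\iii w = A_{j\iii} w \in V$, so $V$ is invariant under every $A_j$. Irreducibility forces $V = \R^d$. For (3) $\Rightarrow$ (2), given $v,w \ne 0$, if $\langle v, A_\iii w\rangle = 0$ for all $\iii \in \Sigma_*$ then $v \perp \linspan\{A_\iii w\} = \R^d$, so $v = 0$, a contradiction. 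For (2) $\Rightarrow$ (1), suppose instead that $\mathsf{A}$ is reducible with proper nontrivial invariant subspace $V$. Pick $w \in V \setminus\{0\}$; since $V$ is proper, pick $v \in V^\perp \setminus\{0\}$. By induction on $|\iii|$, $A_\iii w \in V$ for every $\iii \in \Sigma_*$, so $\langle v, A_\iii w\rangle = 0$ for every $\iii$, contradicting (2).

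For (1) $\Leftrightarrow$ (4), any subspace invariant under each $A_j$ is, by induction on word length, invariant under every product $A_\iii$, and conversely any subspace invariant under every $A_\iii$ is in particular invariant under each single-letter $A_j$. Hence the lattice of invariant subspaces is the same for the two sets, so one is irreducible if and only if the other is.

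The only nuance worth flagging is the treatment of the empty word: if we take the convention $A_\varnothing = I$, then (3) includes $w$ itself in the spanning set, and the step ``$V$ nonzero'' is automatic; if instead $\Sigma_*$ is implicitly $\bigcup_{n\in\N}\Sigma_n$, we use invertibility of the $A_j$ to guarantee $A_j w \ne 0$. Both readings yield the same conclusion, so the argument is robust to this minor ambiguity in the notation.
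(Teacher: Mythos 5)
Your proof is correct. The paper itself omits the argument, stating only that it is ``similar to that of \cite[Lemma 2.6]{FalconerSloan2009}''; the cycle $(1)\Rightarrow(3)\Rightarrow(2)\Rightarrow(1)$ together with the observation that the single matrices $A_j$ and all finite products $A_\iii$ have identical invariant-subspace lattices is precisely the standard elementary argument that reference supplies, so your proof fills in exactly what the paper declines to spell out.
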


\begin{proof}
  The proof is similar to that of \cite[Lemma 2.6]{FalconerSloan2009} and hence omitted.
\end{proof}

\begin{remark}
  For a tuple $\mathsf{A} = (A_1,\ldots,A_N) \in GL_2(\R)^N$ of invertible $2 \times 2$ matrices reducibility is equivalent to the property that the matrices $A_i$ can simultaneously be presented (in some coordinate system) as upper triangular matrices; see \cite[Remark 2.4(1)]{KaenmakiReeve2014}. In the higher dimensional case, by \cite[Proposition 1.4]{FengKaenmaki2011}, the reducible tuple $\mathsf{A}$ can be presented (in some coordinate system) as a tuple of block-upper triangular matrices.
\end{remark}

\begin{lemma} \label{thm:irr_equiv2}
  Let $k \in \{ 0,\ldots,d \}$ and $\mathsf{A} = (A_1,\ldots,A_N) \in GL_d(\R)^N$. Then $\mathsf{A}$ is $k$-irreducible if and only if $\mathsf{A}$ is $(d-k)$-irreducible.
\end{lemma}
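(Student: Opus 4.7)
The plan is to use the non-degenerate bilinear pairing
\[
\wedge^k\R^d \times \wedge^{d-k}\R^d \to \wedge^d\R^d \cong \R, \qquad (v,w) \mapsto v \wedge w,
\]
to dualise $A_i^{\wedge k}$-invariant subspaces of $\wedge^k\R^d$ into $A_i^{\wedge(d-k)}$-invariant subspaces of $\wedge^{d-k}\R^d$. By symmetry, it suffices to show that $k$-reducibility of $\mathsf{A}$ implies $(d-k)$-reducibility, since the roles of $k$ and $d-k$ are interchangeable.

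The key algebraic ingredient is the identity
\[
A^{\wedge k}(v) \wedge A^{\wedge(d-k)}(w) = A^{\wedge d}(v \wedge w) = \det(A)\,v \wedge w,
\]
valid for every $A \in GL_d(\R)$, $v \in \wedge^k\R^d$, and $w \in \wedge^{d-k}\R^d$. The first equality follows from bilinearity of the wedge product and the morphism property \eqref{eq:morphism}, while the second uses that $A^{\wedge d}$ acts as multiplication by $\det A$ on the one-dimensional space $\wedge^d\R^d$. Suppose $\mathsf{A}$ is $k$-reducible, so there exists a proper nontrivial subspace $V \subset \wedge^k\R^d$ with $A_i^{\wedge k}(V) \subset V$ for all $i$. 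Define its annihilator
\[
V^\circ = \{ w \in \wedge^{d-k}\R^d : v \wedge w = 0 \text{ for all } v \in V \}.
\]
Since the wedge pairing is non-degenerate, $\dim V + \dim V^\circ = \binom{d}{k} = \binom{d}{d-k}$, so $V^\circ$ is itself a proper nontrivial subspace of $\wedge^{d-k}\R^d$.

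The remaining step is to verify that $V^\circ$ is invariant under each $A_i^{\wedge(d-k)}$. Fix $i$. Since $A_i^{\wedge k}$ is invertible and $V$ is finite-dimensional, the inclusion $A_i^{\wedge k}(V) \subset V$ forces equality $A_i^{\wedge k}(V) = V$. Hence any $v \in V$ can be written as $v = A_i^{\wedge k}(v')$ with $v' \in V$, and for any $w \in V^\circ$ the identity above yields
\[
v \wedge A_i^{\wedge(d-k)}(w) = A_i^{\wedge k}(v') \wedge A_i^{\wedge(d-k)}(w) = \det(A_i)\,(v' \wedge w) = 0,
\]
so $A_i^{\wedge(d-k)}(w) \in V^\circ$ as required. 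No serious obstacle arises; the only point demanding care is the step $A_i^{\wedge k}(V) = V$, without which the representation $v = A_i^{\wedge k}(v')$ would not be available, and of course one should note that because $\det A_i \neq 0$, the identity transfers vanishing of $v' \wedge w$ back to vanishing of $v \wedge A_i^{\wedge(d-k)}(w)$.
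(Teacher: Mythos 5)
Your proof is correct, and it takes a genuinely different route from the paper. The paper proves the statement directly at the level of vectors: it invokes the inner-product characterisation of irreducibility from Lemma \ref{thm:irr_equiv1}(2) and transfers nonvanishing of $\langle *v, A_\iii^{\wedge k}(*w)\rangle_k$ to nonvanishing of $\langle v, A_\iii^{\wedge(d-k)} w\rangle_{d-k}$ via the Hodge star identities \eqref{eq:inner}, \eqref{eq:basicfact4}, and \eqref{eq:basicfact3}. You instead argue by contrapositive at the level of subspaces: you dualise an invariant subspace $V\subset\wedge^k\R^d$ to its annihilator $V^\circ\subset\wedge^{d-k}\R^d$ with respect to the non-degenerate pairing $(v,w)\mapsto v\wedge w$, and the identity $A^{\wedge k}v\wedge A^{\wedge(d-k)}w=\det(A)\,v\wedge w$ together with $A_i^{\wedge k}(V)=V$ (correctly justified by invertibility and finite dimension) gives invariance of $V^\circ$. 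Your version is self-contained -- it needs neither Lemma \ref{thm:irr_equiv1} nor the Hodge star machinery -- while the paper's version stays inside the inner-product framework that it reuses anyway for the $s$-irreducibility and quasimultiplicativity arguments, so each choice is natural in its context. One small inaccuracy: the equality $A^{\wedge k}(v)\wedge A^{\wedge(d-k)}(w)=A^{\wedge d}(v\wedge w)$ does not follow from the morphism property \eqref{eq:morphism}, which concerns $(AB)^{\wedge k}=A^{\wedge k}B^{\wedge k}$; the correct justification is to verify it on decomposable elements directly from the definition of $A^{\wedge m}$ and extend by bilinearity. This is a misattribution of a true and elementary fact, not a gap.
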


\begin{proof}
  By symmetry, we only need to prove the ``only if'' part. Indeed, fix $0\neq v,w \in \wedge^{d-k}\R^d$ and notice that $*v,*w \in \wedge^k\R^d$. Recalling Lemma \ref{thm:irr_equiv1}, the irreducibility of $\mathsf{A}^{\wedge k}$ implies that there exists $\iii \in \Sigma_*$ such that $\langle *v,A^{\wedge k}_\iii(*w) \rangle_k \ne 0$. By \eqref{eq:inner}, \eqref{eq:basicfact4}, and \eqref{eq:basicfact3}, we have
  \begin{align*}
    \langle v,A^{\wedge(d-k)}_\iii w \rangle_{d-k} &= *(v \wedge *(A^{\wedge(d-k)}_\iii w)) = *(v \wedge (A^{\wedge k}_\iii(*w))) \\
    &= *(*v \wedge *(A^{\wedge k}_\iii(*w))) = \langle *v,A^{\wedge k}_\iii(*w) \rangle_k \ne 0,
  \end{align*}
  where the bijectivity of $A$ and of $*$ are required to show that $*v$ and $A^{\wedge k}_\iii(*w)$ are nonzero. A second application of Lemma \ref{thm:irr_equiv1} finishes the proof.
\end{proof}

\subsection{Singular value function} \label{sec:svf-pressure}

Let $k \in \{ 0,\ldots,d-1 \}$ and $k \le s < k+1$. We define the \emph{singular value function} to be
\begin{equation} \label{eq:def-svf}
  \fii^s(A) = \| A^{\wedge k} \|_{k}^{k+1-s} \; \| A^{\wedge(k+1)} \|_{k+1}^{s-k} = \alpha_1(A) \cdots \alpha_{k}(A) \alpha_{k+1}(A)^{s-k}
\end{equation}
for all $A \in GL_d(\R)$ with the convention that $\|A^{\wedge 0}\|_0 = 1$. Observe that \eqref{eq:morphism} and the submultiplicativity of the operator norm imply
\begin{equation} \label{eq:cylinder2}
\begin{split}
  \fii^s(AB) &= \| (AB)^{\wedge k} \|_{k}^{k+1-s} \; \| (AB)^{\wedge(k+1)} \|_{k+1}^{s-k} \\ &\le \| A^{\wedge k} \|_{k}^{k+1-s} \; \| B^{\wedge k} \|_{k}^{k+1-s} \; \| A^{\wedge(k+1)} \|_{k+1}^{s-k} \; \| B^{\wedge(k+1)} \|_{k+1}^{s-k} = \fii^s(A) \fii^s(B)
\end{split}
\end{equation}
for all $A,B \in GL_d(\R)$. When $s \ge d$, we set $\fii^s(A) = |\det(A)|^{s/d}$ for completeness.

If $U,V \in GL_d(\mathbb{R})$ are isometries, then $\varphi^s(UAV)\leq \varphi^s(U)\varphi^s(A)\varphi^s(V)=\varphi^s(A)$ for all $A \in GL_d(\mathbb{R})$, and by symmetry $\varphi^s(A)\leq \varphi^s(UAV)$ since $U^{-1}$ and $V^{-1}$ are isometries as well. In particular, $\varphi^s(A)=\varphi^s(UAV)$ whenever $U$ and $V$ are isometries. If $A \in GL_d(\mathbb{R})$ is a diagonal matrix then the singular values of $A$ are simply the absolute values of the diagonal entries, so clearly
\begin{equation}\label{eq:svd-formula}
  \varphi^s(A)=\max\biggl\{\biggl(\prod_{i=1}^{k} \|Au_i\|\biggr)\|Au_{k+1}\|^{s-k} : u_1,\ldots,u_{k+1} \in S^{d-1} \text{ are pairwise orthogonal} \biggr\}
\end{equation}
for all diagonal matrices $A \in GL_d(\mathbb{R})$. Here $S^{d-1}$ is the unit sphere of $\mathbb{R}^d$. Since, by the singular value decomposition, every $A \in GL_d(\mathbb{R})$ can be written in the form $A=UDV$ where $U, V$ are isometries and $D$ is diagonal, it follows that \eqref{eq:svd-formula} holds for all $A \in GL_d(\mathbb{R})$. 

Fix $\mathsf{A} = (A_1,\ldots,A_N) \in GL_d(\R)^N$. If we let $\ualpha = \max\{ \alpha_1(A_i) : i \in \{ 1,\ldots,N \} \}$ and $\lalpha = \min\{ \alpha_d(A_i) : i \in \{ 1,\ldots,N \} \} > 0$, then it follows that
\begin{equation*}
  \fii^s(A_\iii)\lalpha^{\delta |\iii|}
  \le \fii^{s+\delta}(A_\iii)
  \le \fii^s(A_\iii)\ualpha^{\delta |\iii|}
\end{equation*}
for all $\delta \ge 0$ and $\iii \in \Sigma_*$. Moreover, \eqref{eq:cylinder2} implies
\begin{equation} \label{eq:sums_triv1}
  \sum_{\iii \in \Sigma_{n+m}} \fii^s(A_\iii) \le \biggl( \sum_{\iii \in \Sigma_n} \fii^s(A_\iii) \biggr)\biggl( \sum_{\iii \in \Sigma_m} \fii^s(A_\iii) \biggr)
\end{equation}
for all $n,m \in \N$. We define
\begin{equation} \label{eq:topo_def}
  P_{\mathsf{A}}(\fii^s) = \lim_{n \to \infty} \tfrac{1}{n} \log\sum_{\iii \in \Sigma_n} \fii^s(A_\iii) = \inf_{n \in \N} \tfrac{1}{n} \log\sum_{\iii \in \Sigma_n} \fii^s(A_\iii)
\end{equation}
and call it the \emph{singular value pressure} of $\mathsf{A}$. The limit above exists and equals to the infimum by the standard theory of subadditive sequences. It is easy to see that, as a function of $s$, the singular value pressure is continuous, strictly decreasing, and convex between any two consecutive integers. Furthermore, since $P_{\mathsf{A}}(\fii^0)=\log N > 0$ and $\lim_{s \to \infty} P_{\mathsf{A}}(\fii^s) = -\infty$ there exists unique $s \ge 0$ for which $P_{\mathsf{A}}(\fii^s)=0$. The minimum of $d$ and this $s$ is called the \emph{affinity dimension} of $\mathsf{A}$ and is denoted by $\dimaff(\mathsf{A})$.

It follows e.g.\ from \cite[Corollary 8.6.2]{GolubVanLoan1983} that the singular value function $\fii^s(A)$ is continuous as a function of $A$. Recently, it has been observed that the singular value pressure is continuous also as a function of $\mathsf{A}$. The following result is proved by Feng and Shmerkin \cite[Theorem 1.2]{FengShmerkin2014}, and subsequently re-proved by Morris \cite[Theorem 2.2]{Morris2015}.

\begin{theorem} \label{le:cty}
  If $0<s<d$, then the function $\mathsf{A} \mapsto P_{\mathsf{A}}(\fii^s)$ defined on $GL_d(\R)^N$ is continuous.
\end{theorem}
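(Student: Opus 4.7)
The proof splits into showing upper and lower semi-continuity of $\mathsf{A}\mapsto P_{\mathsf{A}}(\fii^s)$ separately.

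Upper semi-continuity is immediate from the infimum formula in \eqref{eq:topo_def}. Since $\fii^s$ is continuous on $GL_d(\R)$, for each fixed $n\in\N$ the map $\mathsf{A}\mapsto \tfrac{1}{n}\log \sum_{\iii\in\Sigma_n}\fii^s(A_\iii)$ is continuous on $GL_d(\R)^N$, and $P_{\mathsf{A}}(\fii^s)$ is the pointwise infimum of these.

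Lower semi-continuity is the essential content. The naive attempt via the variational principle is bound to fail: for a fixed invariant measure $\mu$ the mapping $\mathsf{A}\mapsto \lambda_{\mathsf{A}}(\fii^s,\mu)$ is itself an infimum of continuous functions of $\mathsf{A}$, hence only upper semi-continuous, and combining this with the lower bound $P_{\mathsf{A}_m}(\fii^s)\ge h(\mu)+\lambda_{\mathsf{A}_m}(\fii^s,\mu)$ yields the wrong inequality. My plan is instead to establish a matching \emph{quasi-multiplicative} lower bound, in the spirit of Feng and K\"aenm\"aki, at tuples $\mathsf{A}$ for which both $\mathsf{A}^{\wedge k}$ and $\mathsf{A}^{\wedge(k+1)}$ are strongly irreducible (where $k\le s<k+1$). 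For such $\mathsf{A}$ the plan is to exhibit finitely many ``bridging'' words $\kkk_1,\ldots,\kkk_r\in\Sigma_*$ and a constant $c>0$, with
\[
  \max_{1\le t\le r}\fii^s(A_{\iii\kkk_t\jjj}) \ge c\,\fii^s(A_\iii)\,\fii^s(A_\jjj)
\]
for all $\iii,\jjj\in\Sigma_*$. Combined with the sub-multiplicative upper bound \eqref{eq:sums_triv1}, this upgrades the inf-representation to a two-sided estimate $\tfrac{1}{n}\log \sum_{\iii\in\Sigma_n}\fii^s(A_\iii) = P_{\mathsf{A}}(\fii^s)+O(1/n)$ whose hidden constants depend only on uniform angular-separation data and on the bridging words, and therefore remain valid (with a slightly worse constant) for all $\mathsf{B}$ sufficiently close to $\mathsf{A}$. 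Coupled with continuity of $\mathsf{A}\mapsto \tfrac{1}{n}\log\sum_{\iii\in\Sigma_n}\fii^s(A_\iii)$ for fixed $n$, this yields continuity at every such strongly-irreducible $\mathsf{A}$.

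The main obstacle is extending to tuples for which the irreducibility hypothesis fails. My plan here is to approximate a general $\mathsf{A}$ by strongly-irreducible tuples $\mathsf{A}_m\to\mathsf{A}$ (which form a dense subset of $GL_d(\R)^N$) and to verify separately that $P_{\mathsf{A}_m}(\fii^s)\to P_{\mathsf{A}}(\fii^s)$. The $\limsup$ inequality is given by the upper semi-continuity already proved. For the $\liminf$ inequality---the hard step---I would put $\mathsf{A}$ in block-upper triangular form (possible since $\mathsf{A}$ is reducible by \cite[Proposition 1.4]{FengKaenmaki2011}), show via a Ruelle-operator-style argument that its pressure equals the pressure of its block-diagonal part (the substance of Theorem \ref{thm:intro-block}), and induct on the dimension $d$. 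The base case $d=1$ is trivial, since then $P_{\mathsf{A}}(\fii^s)=\log \sum_{i=1}^N|A_i|^s$ is manifestly continuous in $\mathsf{A}$.
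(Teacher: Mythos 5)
The paper does not actually prove Theorem \ref{le:cty}: it cites Feng and Shmerkin \cite[Theorem 1.2]{FengShmerkin2014} and Morris \cite[Theorem 2.2]{Morris2015}. So there is no internal proof to compare against; what follows is an assessment of your plan on its own terms.

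Your upper semi-continuity step is correct, and your diagnosis of why the variational principle does not give lower semi-continuity is accurate. Your plan for the strongly irreducible case is essentially sound in outline: quasi-multiplicativity gives a two-sided estimate $\tfrac{1}{n}\log\sum_{\iii\in\Sigma_n}\fii^s(A_\iii)=P_{\mathsf{A}}(\fii^s)+O(1/n)$, and the finite bridging data can be perturbed, yielding continuity near strongly $s$-irreducible tuples. (Making the constants locally stable requires a quantitative version of Lemma \ref{thm:irreducibility}, whose proof here is by compactness and contradiction, but this can be done.)

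The genuine gap is the reducible case, and the difficulty is logical rather than merely technical. Lower semi-continuity at a reducible $\mathsf{A}$ requires $P_{\mathsf{B}}(\fii^s)>P_{\mathsf{A}}(\fii^s)-\eps$ for \emph{all} $\mathsf{B}$ near $\mathsf{A}$, not just for a dense family. Your block-triangular reduction and the induction on dimension give information about $P_{\mathsf{A}}(\fii^s)$ itself---that it equals the pressure of the block-diagonal part, whose pieces live in lower dimension. But that is an internal fact about $\mathsf{A}$; it says nothing about $P_{\mathsf{B}}(\fii^s)$ when $\mathsf{B}$ is a nearby irreducible tuple, since $\mathsf{B}$ is not simultaneously block-triangularisable and the inductive hypothesis in lower dimension cannot be applied to it. Nor can you interpolate from the strongly irreducible estimate: the quasi-multiplicativity constants $c, K$ for strongly irreducible $\mathsf{A}_m\to\mathsf{A}$ must degenerate as $m\to\infty$ (a reducible tuple is typically not quasi-multiplicative at all---indeed reducibility is exactly the source of non-uniqueness of equilibrium states in this paper), so the radii of the neighbourhoods on which your uniform estimate is valid shrink to zero, and they do not cover any fixed neighbourhood of $\mathsf{A}$. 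Without a new idea, the argument stalls exactly at the reducible boundary.

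There is also a circularity concern you would need to discharge. In this paper the block-triangular pressure identity (Theorem \ref{thm:intro-block}) is itself \emph{deduced from} Theorem \ref{le:cty}: the key step of Proposition \ref{thm:2-block} conjugates off the superdiagonal by $\eps$ and invokes Theorem \ref{le:cty} to pass to the limit $\eps\downarrow 0$. You propose to prove the identity independently by ``a Ruelle-operator-style argument.'' For genuine upper-triangular matrices ($1\times 1$ blocks) this is Falconer--Miao \cite{FalconerMiao2007}, but you reduce to block-upper-triangular form with \emph{irreducible} (hence possibly higher-dimensional) blocks, for which the corresponding direct combinatorial estimate is substantially more delicate and is not supplied. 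The proofs actually cited by the paper avoid the irreducible/reducible dichotomy entirely, instead establishing uniform lower bounds on the finite-$n$ pressure sums that hold for arbitrary tuples; that uniformity across the reducible locus is precisely what your proposal lacks.
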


Let $k \in \{ 0,\ldots,d-1 \}$ and $k < s < k+1$. We say that $\mathsf{A}$ is \emph{$s$-irreducible} if for every $v_k,w_k \in \wedge^k\R^d$ and $v_{k+1},w_{k+1} \in \wedge^{k+1}\R^d$ there is $\iii \in \Sigma_*$ such that
\begin{equation*}
  \langle v_k,A^{\wedge k}_\iii w_k \rangle_k \ne 0 \quad \text{and} \quad \langle v_{k+1},A^{\wedge(k+1)}_\iii w_{k+1} \rangle_{k+1} \ne 0.
\end{equation*}
Observe that, by Lemma \ref{thm:irr_equiv1}, if $\mathsf{A}$ is $s$-irreducible, then it is $k$-irreducible and $(k+1)$-irreducible.
We say that $\mathsf{A}$ is \emph{$\fii^s$-quasimultiplicative} if there exists a constant $c \ge 1$ and $K \in \N \cup \{ 0 \}$ so that for every $\iii,\jjj \in \Sigma_*$ there is $\kkk \in \bigcup_{k=0}^K \Sigma_k$ such that
\begin{equation} \label{eq:irr}
  \fii^s(A_\iii)\fii^s(A_\jjj) \le c\fii^s(A_{\iii\kkk\jjj}).
\end{equation}
The following lemma is similar to \cite[Proposition 2.1]{FalconerSloan2009}, and is also a modification of \cite[Proposition 2.8]{Feng2009}.

\begin{lemma} \label{thm:irreducibility}
  Let $k \in \{ 0,\ldots,d-1 \}$ and $k < t < k+1$. If $\mathsf{A} = (A_1,\ldots,A_N) \in GL_d(\R)^N$ is $t$-irreducible, then $\mathsf{A}$ is $\fii^s$-quasimultiplicative for all $k < s < k+1$.
\end{lemma}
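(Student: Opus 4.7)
The plan is to derive the quasimultiplicativity inequality by combining an explicit singular-vector lower bound on $\varphi^s(A_{\iii\kkk\jjj})$ with a compactness argument that upgrades $t$-irreducibility into a uniform choice of the linking word $\kkk$ of bounded length.

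First, for each $\iii \in \Sigma_*$ and $\ell \in \{k, k+1\}$, I would fix unit vectors $u_\iii^\ell, \overline{u}_\iii^\ell \in \wedge^\ell\R^d$ realising the top singular value of $A_\iii^{\wedge \ell}$, so that $A_\iii^{\wedge \ell} u_\iii^\ell = \|A_\iii^{\wedge \ell}\|_\ell \overline{u}_\iii^\ell$ and, by transposition, $\langle \overline{u}_\iii^\ell, A_\iii^{\wedge \ell} w\rangle_\ell = \|A_\iii^{\wedge \ell}\|_\ell \langle u_\iii^\ell, w\rangle_\ell$ for all $w$. Using \eqref{eq:morphism} and testing the operator norm against the unit pair $(\overline{u}_\iii^\ell, u_\jjj^\ell)$, a direct computation yields
\[
  \|A_{\iii\kkk\jjj}^{\wedge \ell}\|_\ell \ge \|A_\iii^{\wedge \ell}\|_\ell \|A_\jjj^{\wedge \ell}\|_\ell \bigl|\bigl\langle u_\iii^\ell, A_\kkk^{\wedge \ell} \overline{u}_\jjj^\ell\bigr\rangle_\ell\bigr|.
\]
Raising this to the powers $k+1-s$ at $\ell=k$ and $s-k$ at $\ell=k+1$ and multiplying, \eqref{eq:def-svf} gives
\[
  \varphi^s(A_{\iii\kkk\jjj}) \ge \varphi^s(A_\iii)\,\varphi^s(A_\jjj)\, g_\kkk\bigl(u_\iii^k, \overline{u}_\jjj^k, u_\iii^{k+1}, \overline{u}_\jjj^{k+1}\bigr),
\]
where $g_\kkk(a,b,a',b') := |\langle a, A_\kkk^{\wedge k} b\rangle_k|^{k+1-s} |\langle a', A_\kkk^{\wedge(k+1)} b'\rangle_{k+1}|^{s-k}$. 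So it suffices to bound $\max_\kkk g_\kkk$ from below, uniformly over the compact product $X$ of the four relevant unit spheres (two in $\wedge^k\R^d$ and two in $\wedge^{k+1}\R^d$), using only words $\kkk$ of bounded length.

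The main obstacle, and the point at which $t$-irreducibility is essential, is precisely this uniform bound: one and the same $\kkk$ must simultaneously make both inner products nonzero. For each fixed configuration $(a,b,a',b') \in X$, $t$-irreducibility (applied to $(a,b)$ in $\wedge^k\R^d$ and $(a',b')$ in $\wedge^{k+1}\R^d$) supplies some $\kkk \in \Sigma_*$ with $g_\kkk(a,b,a',b') > 0$. By continuity of $g_\kkk$ this strict positivity persists on an open neighborhood of $(a,b,a',b')$, and compactness of $X$ then produces finitely many words $\kkk_1,\ldots,\kkk_m$ whose corresponding neighborhoods cover $X$. The function $G := \max_{1 \le j \le m} g_{\kkk_j}$ is therefore continuous and strictly positive on $X$, so $G \ge c_0$ for some $c_0 > 0$.

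To conclude, I would set $K = \max_j |\kkk_j|$ and $c = 1/c_0$. For arbitrary $\iii, \jjj \in \Sigma_*$, the configuration $(u_\iii^k, \overline{u}_\jjj^k, u_\iii^{k+1}, \overline{u}_\jjj^{k+1})$ lies in $X$, so some $\kkk_j$ of length at most $K$ achieves $g_{\kkk_j} \ge c_0$, and combining this with the Step~1 inequality yields $\varphi^s(A_\iii)\varphi^s(A_\jjj) \le c\,\varphi^s(A_{\iii\kkk_j\jjj})$, which is \eqref{eq:irr}. Beyond the simultaneity demand on $\kkk$, the argument is a standard quasimultiplicativity argument in the spirit of \cite[Proposition 2.1]{FalconerSloan2009} and \cite[Proposition 2.8]{Feng2009}.
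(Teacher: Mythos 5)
Your proof is correct and rests on the same ingredients as the paper's: the singular-vector bound $\varphi^s(A_{\iii\kkk\jjj}) \ge \varphi^s(A_\iii)\,\varphi^s(A_\jjj)\, g_\kkk(\cdot)$, and compactness of the unit spheres in $\wedge^k\R^d$ and $\wedge^{k+1}\R^d$ combined with $t$-irreducibility to bound the linking factor $g_\kkk$ uniformly from below. The only difference is cosmetic: you argue directly via a finite open subcover, while the paper argues by contradiction, extracting a convergent subsequence of singular vectors and showing that the limiting vectors would violate $t$-irreducibility.
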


\begin{proof}
  We assume, contrary to the claim, that there exists $k<s<k+1$ such that for every $K \in \N$ there are $\iii_K,\jjj_K \in \Sigma_*$ so that
  \begin{equation} \label{eq:irr_antithesis}
    \fii^s(A_{\iii_K\kkk\jjj_K}) < \fii^s(A_{\iii_K})\fii^s(A_{\jjj_K})/K
  \end{equation}
  for all $\kkk \in \Sigma_*$ with $|\kkk| \le K$. For each $K \in \N$ we choose $v_{K,k},w_{K,k} \in \wedge^k\R^d$ such that $|v_{K,k}|_k = |w_{K,k}|_k =1$ and
  \begin{align*}
    \| A^{\wedge k}_{\iii_K} \|_k &= \| (A^{\wedge k}_{\iii_K})^T \|_k = |(A^{\wedge k}_{\iii_K})^Tv_{K,k}|_k, \\
    \| A^{\wedge k}_{\jjj_K} \|_k &= |A^{\wedge k}_{\jjj_K}w_{K,k}|_k.
  \end{align*}
  Defining
  \begin{equation*}
    v'_{K,k} = \frac{(A^{\wedge k}_{\iii_K})^Tv_{K,k}}{\| A^{\wedge k}_{\iii_K} \|_k} \quad \text{and} \quad
    w'_{K,k} = \frac{A^{\wedge k}_{\jjj_K}w_{K,k}}{\| A^{\wedge k}_{\jjj_K} \|_k},
  \end{equation*}
  Cauchy-Schwarz inequality gives
  \begin{equation*}
    \langle v'_{K,k},A^{\wedge k}_\kkk w'_{K,k} \rangle_k = \frac{\langle (A^{\wedge k}_{\iii_K})^Tv_{K,k},A^{\wedge k}_{\kkk\jjj_K}w_{K,k} \rangle_k}{\| A^{\wedge k}_{\iii_K} \|_k \, \| A^{\wedge k}_{\jjj_K} \|_k}  \le \frac{\| A^{\wedge k}_{\iii_K\kkk\jjj_K} \|_k}{\| A^{\wedge k}_{\iii_K} \|_k \; \| A^{\wedge k}_{\jjj_K} \|_k}
  \end{equation*}
  for all $\kkk \in \Sigma_*$. We define $v'_{K,k+1}, w'_{K,k+1} \in \wedge^{k+1}\R^d$ in an analogous way. Hence, by recalling \eqref{eq:irr_antithesis} and the definition of the singular value function \eqref{eq:def-svf}, we get
  \begin{equation*}
    \langle v'_{K,k},A^{\wedge k}_\kkk w'_{K,k} \rangle_k^{k+1-s} \, \langle v'_{K,k+1},A^{\wedge(k+1)}_\kkk w'_{K,k+1} \rangle_{k+1}^{s-k} \le 1/K
  \end{equation*}
  for all $\kkk \in \Sigma_*$ with $|\kkk| \le K$. We may now choose a subsequence and elements $v_k,w_k \in \wedge^k\R^d$ and $v_{k+1},w_{k+1} \in \wedge^{k+1}\R^d$ with $|v_k|_k = |w_k|_k = 1 = |v_{k+1}|_{k+1} = |w_{k+1}|_{k+1}$ so that $v'_{K,k} \to v_k$, $w'_{K,k} \to w_k$, $v'_{K,k+1} \to v_{k+1}$, and $w'_{K,k+1} \to w_{k+1}$ when $K \to \infty$ along the subsequence. Therefore
  \begin{equation*}
    \langle v_k,A^{\wedge k}_\kkk w_k \rangle_k^{k+1-s} \, \langle v_{k+1},A^{\wedge(k+1)}_\kkk w_{k+1} \rangle_{k+1}^{s-k} = 0
  \end{equation*}
  for all $\kkk \in \Sigma_*$. This contradicts the hypothesis of $t$-irreducibility.
\end{proof}

\subsection{Equilibrium states} \label{sec:equilibrium}
We denote the collection of all Borel probability measures on $\Sigma$ by $\MM(\Sigma)$, and endow it with the weak$^*$ topology. We say that $\mu \in \MM(\Sigma)$ is \emph{fully supported} if $\mu([\iii]) > 0$ for all $\iii \in \Sigma_*$. Let
\[
  \MM_\sigma(\Sigma) = \{ \mu \in \MM(\Sigma) : \mu \text{ is $\sigma$-invariant} \},
\]
where \emph{$\sigma$-invariance} of $\mu$ means that
$\mu([\iii]) = \mu(\sigma^{-1}([\iii])) = \sum_{i=1}^N \mu([i\iii])$
for all $\iii \in \Sigma_*$. Observe that if $\mu \in \MM_\sigma(\Sigma)$, then $\mu(A)=\mu(\sigma^{-1}(A))$ for all Borel sets $A \subset \Sigma$.
We say that $\mu$ is \emph{ergodic} if $\mu(A) = 0$ or $\mu(A) = 1$ for every Borel set $A \subset \Sigma$ with $A = \sigma^{-1}(A)$. Recall that the set $\MM_\sigma(\Sigma)$ is compact and convex with ergodic measures as its extreme points.

If $\mu \in \MM_\sigma(\Sigma)$, then we define the \emph{entropy} $h$ of $\mu$ by setting
\begin{equation*}
  h(\mu) = \lim_{n \to \infty} \tfrac{1}{n} \sum_{\iii \in \Sigma_n} -\mu([\iii]) \log\mu([\iii]) = \inf_{n \in \N} \tfrac{1}{n} \sum_{\iii \in \Sigma_n} -\mu([\iii]) \log\mu([\iii]).
\end{equation*}
In addition, if $\mathsf{A} = (A_1,\ldots,A_N) \in GL_d(\R)^N$, then we define the $i$th \emph{Lyapunov exponent} of $\mu$ by setting
\begin{equation*}
  \lambda_{\mathsf{A}}(\alpha_i,\mu) = \lim_{n \to \infty} \tfrac{1}{n} \sum_{\iii \in \Sigma_n} \mu([\iii]) \log\alpha_i(A_\iii)
\end{equation*}
for all $i \in \{ 1,\ldots,d \}$. Furthermore, if $k \in \{ 0,\ldots,d-1 \}$ and $k<s<k+1$, then we define
\begin{equation*}
\begin{split}
  \lambda_{\mathsf{A}}(\fii^s,\mu) &= \lim_{n \to \infty} \tfrac{1}{n} \sum_{\iii \in \Sigma_n} \mu([\iii]) \log\fii^s(A_\iii) = \inf_{n \in \N} \tfrac{1}{n} \sum_{\iii \in \Sigma_n} \mu([\iii]) \log\fii^s(A_\iii) \\
  &= \lambda_{\mathsf{A}}(\alpha_1,\mu) + \cdots + \lambda_{\mathsf{A}}(\alpha_k,\mu) + (s-k)\lambda_{\mathsf{A}}(\alpha_{k+1},\mu).
\end{split}
\end{equation*}
Recalling \eqref{eq:cylinder2} and the fact that $\mu$ is invariant, the limits above exist and equal the infimums of the corresponding sequences by the standard theory of subadditive sequences.

An application of Jensen's inequality yields $P_{\mathsf{A}}(\fii^s) \ge h(\mu) + \lambda_{\mathsf{A}}(\fii^s,\mu)$ for all $\mu \in \MM_\sigma(\Sigma)$ and $s \ge 0$. Given ergodic $\mu\in \MM_\sigma(\Sigma)$ the number $s$ for which $h(\mu) + \lambda_{\mathsf{A}}(\fii^s,\mu) = 0$ is called the \emph{Lyapunov dimension} of $\mu$. A measure $\mu \in \MM_\sigma(\Sigma)$ is called an \emph{$\varphi^s$-equilibrium state} of $\mathsf{A}$ if it satisfies the following variational principle:
\begin{equation*}
  P_{\mathsf{A}}(\fii^s) = h(\mu) + \lambda_{\mathsf{A}}(\fii^s,\mu).
\end{equation*}
K\"{a}enm\"{a}ki \cite[Theorems 2.6 and 4.1]{Kaenmaki2004} proved that for each $\mathsf{A} \in GL_d(\R)^N$ and $s \ge 0$ there exists an ergodic $\fii^s$-equilibrium state of $\mathsf{A}$; see also \cite[Theorem 3.3]{KaenmakiVilppolainen2010}. The example of K\"aenm\"aki and Vilppolainen \cite[Example 6.2]{KaenmakiVilppolainen2010} shows that such an equilibrium state is not necessarily unique.

As a first observation on the structure of the set of all equilibrium states, we recall the following result; see Feng and K\"aenm\"aki \cite[Proposition 1.2]{FengKaenmaki2011}, Feng \cite[Theorem 5.5]{Feng2011}, and K\"aenm\"aki and Reeve \cite[Theorem A]{KaenmakiReeve2014}.

\begin{theorem} \label{thm:unique-eq}
  If $0 \le s \le d$ and $\mathsf{A} = (A_1,\ldots,A_N) \in GL_d(\R)^N$ is $\fii^s$-quasimultiplicative, then there exists a unique $\fii^s$-equilibrium state of $\mathsf{A}$ and it is fully supported.
\end{theorem}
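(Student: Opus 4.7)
The plan is to follow the classical Gibbs-measure route for subadditive thermodynamic formalism: derive two-sided control on the partition sums, use it to construct an equilibrium state as a weak$^*$ limit, promote the control to a Gibbs property of that measure, and then deduce uniqueness and full support. Throughout, write $P = P_{\mathsf{A}}(\fii^s)$, $\psi(\iii) = \fii^s(A_\iii) e^{-|\iii|P}$, and $Z_n = \sum_{\iii \in \Sigma_n} \fii^s(A_\iii)$. The inequality $Z_n \ge e^{nP}$ is immediate from \eqref{eq:topo_def}. For the matching upper bound, for each $(\iii,\jjj) \in \Sigma_n \times \Sigma_m$ quasimultiplicativity supplies $\kkk=\kkk(\iii,\jjj)$ with $|\kkk|\le K$ and $\fii^s(A_\iii)\fii^s(A_\jjj)\le c\fii^s(A_{\iii\kkk\jjj})$; summing and grouping by $\kkk$ (the maps $(\iii,\jjj)\mapsto \iii\kkk\jjj$ are injective for each fixed $\kkk$) gives $Z_n Z_m \le c\sum_{j=0}^K Z_{n+j+m}$. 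Combined with the trivial estimate $Z_{n+j+m}\le M Z_{n+m}$ with $M = \max_{0\le j\le K} Z_j$, this yields $Z_{n+m}\ge Z_n Z_m/(c(K+1)M)$, and a standard Fekete-type lemma produces $Z_n \le C_0 e^{nP}$ for a constant $C_0$.

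Define $\nu_n \in \MM(\Sigma)$ by $\nu_n([\iii]) = \fii^s(A_\iii)/Z_n$ for $\iii \in \Sigma_n$ (with the mass inside each cylinder distributed in an arbitrary Borel way) and set $\mu_n = \frac{1}{n}\sum_{k=0}^{n-1}\sigma^k_* \nu_n$. Any weak$^*$ subsequential limit $\mu$ lies in $\MM_\sigma(\Sigma)$, and the variational argument of \cite{Kaenmaki2004} shows $\mu$ is an $\fii^s$-equilibrium state. The Gibbs upper bound $\mu([\iii])\le C_0^2 \psi(\iii)$ follows from submultiplicativity \eqref{eq:cylinder2}: for $\iii \in \Sigma_m$ and $0\le k\le n-m$,
\[
  \nu_n(\sigma^{-k}[\iii]) = \tfrac{1}{Z_n}\sum_{\lll\in\Sigma_k}\sum_{\jjj'\in\Sigma_{n-k-m}}\fii^s(A_{\lll\iii\jjj'}) \le \tfrac{Z_k Z_{n-k-m}}{Z_n}\fii^s(A_\iii) \le C_0^2\, \psi(\iii),
\]
and the bound survives Cesaro averaging and weak$^*$ passage.

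The Gibbs lower bound is the heart of the argument and applies quasimultiplicativity twice. For $\iii \in \Sigma_m$ and parameters $r,r'$, each pair $(\lll,\jjj') \in \Sigma_r\times\Sigma_{r'}$ yields $\kkk_1,\kkk_2$ with $|\kkk_i|\le K$ satisfying $\fii^s(A_\lll)\fii^s(A_\iii)\fii^s(A_{\jjj'})\le c^2\fii^s(A_{\lll\kkk_1\iii\kkk_2\jjj'})$. Summing over $(\lll,\jjj')$, partitioning by $(\kkk_1,\kkk_2)$, and invoking the two-sided bounds on $Z$ give
\[
  \psi(\iii) \le C_1 \sum_{|\kkk_1|,|\kkk_2|\le K} \nu_{n(\kkk_1,\kkk_2)}\bigl(\sigma^{-r}[\kkk_1\iii\kkk_2]\bigr),
\]
where $n(\kkk_1,\kkk_2) = r+|\kkk_1|+m+|\kkk_2|+r'$ and $C_1$ is a constant depending on $c, K, C_0, P$. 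Averaging over $r \in \{0,\ldots,L\}$ and letting $L,r'\to\infty$ along a subsequence realising the weak$^*$ limit $\mu$ turns each summand into $\mu([\kkk_1\iii\kkk_2])$. Since $\sigma^{-|\kkk_1|}[\iii]\supseteq[\kkk_1\iii]\supseteq[\kkk_1\iii\kkk_2]$, $\sigma$-invariance yields $\mu([\kkk_1\iii\kkk_2])\le\mu([\iii])$, and therefore
\[
  \psi(\iii) \le C_1 N_K^2\, \mu([\iii]), \qquad N_K = \sum_{j=0}^K N^j,
\]
giving the desired lower bound. Since $\psi(\iii)>0$ for every $\iii \in \Sigma_*$, the measure $\mu$ is fully supported.

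For uniqueness, let $\mu'$ be any $\fii^s$-equilibrium state. A standard relative-entropy computation using the upper Gibbs bound for $\mu$, the equilibrium identity $h(\mu')+\lambda_{\mathsf{A}}(\fii^s,\mu') = P$, and the asymptotic $\sum_\iii \psi(\iii) \asymp 1$, forces $\mu'$ to satisfy both Gibbs bounds as well, so $\mu$ and $\mu'$ are mutually equivalent with uniformly bounded Radon-Nikodym derivative; decomposing $\mu'$ into ergodic components (each of which is itself an equilibrium state) and using that two ergodic invariant measures which are mutually equivalent must coincide then yields $\mu' = \mu$. I expect the main obstacle to be the careful bookkeeping in the Gibbs lower bound --- the two-sided padding insertion, the dependence of $n(\kkk_1,\kkk_2)$ on $(\kkk_1,\kkk_2)$, and the interchange of the $r$-averaging with the weak$^*$ limit along a common subsequence all require delicate handling --- whereas the uniqueness step becomes routine once the Gibbs property is in place.
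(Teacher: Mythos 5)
You should first note that the paper does not actually prove Theorem \ref{thm:unique-eq}: it is recalled from the literature (Feng--K\"aenm\"aki, Feng, and K\"aenm\"aki--Reeve), and the route you sketch --- two-sided bounds on the partition sums from quasimultiplicativity, a Ces\`aro-limit construction of an invariant measure with a two-sided Gibbs property, and full support from the lower Gibbs bound --- is essentially the route of those references. That part of your proposal is sound: the estimate $Z_nZ_m \le c(K+1)\bigl(\max_{0\le j\le K}Z_j\bigr)Z_{n+m}$, the upper bound $\mu([\iii])\le C_0^2\psi(\iii)$, and the double insertion of connecting words for the lower bound all go through; for the lower bound the clean bookkeeping is to fix the total length $n$ from the subsequence defining $\mu$, let $r$ range over $\{0,\ldots,n-m-2K\}$ with $r'$ determined by $r,\kkk_1,\kkk_2$, and absorb the $O((m+K)/n)$ boundary terms before passing to the limit on the clopen cylinders.

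The genuine gap is in the uniqueness step. You assert that a ``standard relative-entropy computation'' forces an arbitrary equilibrium state $\mu'$ to satisfy both Gibbs bounds. What the equilibrium identity actually gives (using $\tfrac1n H_n(\mu')\ge h(\mu')$ and $\tfrac1n\int\log\fii^s(A_{\iii|_n})\,d\mu'\ge\lambda_{\mathsf{A}}(\fii^s,\mu')$) is, for every $n$, $\sum_{\iii\in\Sigma_n}\mu'([\iii])\log\bigl(\psi(\iii)/\mu'([\iii])\bigr)\ge 0$ with $\psi(\iii)=e^{-|\iii|P_{\mathsf{A}}(\fii^s)}\fii^s(A_\iii)$, i.e.\ the Kullback--Leibler divergence of the $n$-th cylinder marginal of $\mu'$ from the normalised weights is bounded by $\log C_0$ uniformly in $n$. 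A uniform bound on relative entropy does not yield pointwise two-sided control of cylinder masses --- it is even compatible with $\mu'([\iii])=0$ for many $\iii$, since such terms contribute nothing to the sum --- so neither Gibbs bound for $\mu'$, nor equivalence of $\mu'$ and $\mu$ with bounded density, follows from this computation; the conclusion you want is exactly what uniqueness would give and cannot be used to prove it. The standard repair is different: Lebesgue-decompose an arbitrary equilibrium state as $\mu'=\alpha\nu_1+(1-\alpha)\nu_2$ with $\nu_1\perp\mu$ and $\nu_2\ll\mu$, observe that both parts are invariant and again equilibrium states by affineness of entropy and Lyapunov exponent, rule out $\alpha>0$ by Bowen's log-sum argument (apply the displayed inequality to a union of $n$-cylinders carrying most of the $\nu_1$-mass but, by the lower Gibbs bound for $\mu$, little $\psi$-mass), and then conclude $\mu'=\mu$ from $\mu'\ll\mu$ together with ergodicity of $\mu$ --- which is itself not free and must be extracted from quasimultiplicativity, e.g.\ via $\mu([\iii\kkk\jjj])\ge c'\mu([\iii])\mu([\jjj])$ for some connecting word $|\kkk|\le K$ and a pigeonhole argument over the finitely many possible gaps. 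So your uniqueness paragraph needs to be replaced by such an argument (or by a direct appeal to K\"aenm\"aki--Reeve, Theorem A, as the paper does), while the rest of your construction stands.
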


By Lemma \ref{thm:irreducibility}, we thus have introduced a condition on matrices to guarantee the uniqueness of the equilibrium state.

Similarly to \eqref{eq:topo_def}, given $\mathsf{A} = (A_1,\ldots,A_N) \in GL_d(\R)^N$ and $s \ge 0$, we define
\begin{equation*}
  P_{\mathsf{A}}(\|\cdot\|^s) = \lim_{n \to \infty} \tfrac{1}{n} \log\sum_{\iii \in \Sigma_n} \| A_\iii \|^s
\end{equation*}
and call it the \emph{norm pressure} of $\mathsf{A}$. Note that $P_{\mathsf{A}}(\|\cdot\|^s) = P_{\mathsf{A}}(\fii^s)$ for all $0 \le s \le 1$. If $\mu \in \MM_\sigma(\Sigma)$, then we also set
\begin{equation*}
  \lambda_{\mathsf{A}}(\|\cdot\|^s,\mu) = s\lambda_{\mathsf{A}}(\alpha_1,\mu) = \lim_{n \to \infty} \tfrac{1}{n} \sum_{\iii \in \Sigma_n} \mu([\iii]) \log\|A_\iii\|^s.
\end{equation*}
It follows that $P_{\mathsf{A}}(\|\cdot\|^s) \ge h(\mu) + \lambda_{\mathsf{A}}(\|\cdot\|^s,\mu)$ for all $\mu \in \MM_\sigma(\Sigma)$ and $s \ge 0$. A measure $\mu \in \MM_\sigma(\Sigma)$ is called a \emph{$\|\cdot\|^s$-equilibrium state} of $\mathsf{A}$ if
\begin{equation*}
  P_{\mathsf{A}}(\|\cdot\|^s) = h(\mu) + \lambda_{\mathsf{A}}(\|\cdot\|^s,\mu).
\end{equation*}
The following theorem is proved by Feng and K\"aenm\"aki \cite[Theorem 1.7]{FengKaenmaki2011}.

\begin{theorem} \label{thm:FeKa}
  If $s\geq 0$ and $\mathsf{A} \in GL_d(\R)^N$, then there exist at most $d$ distinct ergodic $\|\cdot\|^s$-equilibrium states of $\mathsf{A}$ and they are all fully supported. Furthermore, if $\mathsf{A}$ is irreducible, then the equilibrium state is unique.
\end{theorem}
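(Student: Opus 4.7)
The plan is to split by the irreducibility of $\mathsf{A}$. For the irreducible case, I would establish a norm-analogue of quasi-multiplicativity by adapting Lemma \ref{thm:irreducibility}: for any $\iii, \jjj \in \Sigma_*$, choose unit vectors $v, w \in S^{d-1}$ realizing the operator norms $\|A_\iii\| = |A_\iii^T v|$ and $\|A_\jjj\| = |A_\jjj w|$, then use Lemma \ref{thm:irr_equiv1}(2) together with a compactness argument on $S^{d-1} \times S^{d-1}$ to produce a word $\kkk$ of uniformly bounded length with $|\langle v, A_\kkk w\rangle| \ge c > 0$. Since $\|A_{\iii\kkk\jjj}\| \ge |\langle v, A_{\iii\kkk\jjj} w\rangle|$, this yields $\|A_{\iii\kkk\jjj}\|^s \ge c^s \|A_\iii\|^s \|A_\jjj\|^s$, and the Gibbs-type argument underlying Theorem \ref{thm:unique-eq} then furnishes a unique, fully supported $\|\cdot\|^s$-equilibrium state in the irreducible case.

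For general $\mathsf{A}$, change coordinates so that each $A_i$ is block-upper triangular with diagonal blocks $A_i^{(j,j)}$ of sizes $d_1, \ldots, d_\ell$ summing to $d$, arranged by iterating the reduction so that each diagonal subtuple $\mathsf{A}^{(j,j)} = (A_1^{(j,j)}, \ldots, A_N^{(j,j)})$ is itself irreducible. The key quantitative step is the sandwich estimate
\[
  \max_{1 \le j \le \ell} \|A_\iii^{(j,j)}\| \;\le\; \|A_\iii\| \;\le\; C \, p(|\iii|) \max_{1 \le j \le \ell} \|A_\iii^{(j,j)}\|
\]
for all $\iii \in \Sigma_*$, where $p$ is a fixed polynomial and $C$ is a constant independent of $\iii$. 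The lower bound holds because each $A_\iii^{(j,j)}$ is a quotient representation on an invariant subquotient of $\R^d$; the upper bound is obtained by expanding each off-diagonal entry of $A_\iii$ as a telescoping sum indexed by nondecreasing block-paths, of which there are polynomially many. Passing to $\tfrac{1}{|\iii|}\log$ absorbs the polynomial factor, yielding $P_{\mathsf{A}}(\|\cdot\|^s) = \max_j P_{\mathsf{A}^{(j,j)}}(\|\cdot\|^s)$ and, by integrating against $\mu \in \MM_\sigma(\Sigma)$, also $\lambda_{\mathsf{A}}(\|\cdot\|^s,\mu) = \max_j \lambda_{\mathsf{A}^{(j,j)}}(\|\cdot\|^s,\mu)$.

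Given an ergodic $\|\cdot\|^s$-equilibrium state $\mu$ of $\mathsf{A}$, the variational principle $P_{\mathsf{A}}(\|\cdot\|^s) = h(\mu) + \lambda_{\mathsf{A}}(\|\cdot\|^s,\mu)$ together with $P_{\mathsf{A}^{(j,j)}}(\|\cdot\|^s) \ge h(\mu) + \lambda_{\mathsf{A}^{(j,j)}}(\|\cdot\|^s,\mu)$ for every $j$ and the identity for $\lambda$ forces $\mu$ to be a $\|\cdot\|^s$-equilibrium state of some $\mathsf{A}^{(j,j)}$ maximising $P_{\mathsf{A}^{(j,j)}}(\|\cdot\|^s)$. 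Each irreducible block having a unique ergodic equilibrium state by the first step, it follows that $\mathsf{A}$ admits at most $\ell \le d$ ergodic $\|\cdot\|^s$-equilibrium states, all fully supported on $\Sigma$.

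The principal obstacle is the upper half of the sandwich estimate: expanding a product of block-upper-triangular matrices creates cross terms involving off-diagonal blocks whose norms could a priori compete with those of the diagonal blocks. A clean route is to conjugate $\mathsf{A}$ by the diagonal matrix $D_\eps = \operatorname{diag}(I_{d_1}, \eps I_{d_2}, \ldots, \eps^{\ell-1} I_{d_\ell})$, which shrinks the $(j,k)$-block by the factor $\eps^{k-j}$ without changing the diagonal blocks; for sufficiently small $\eps$ the telescoping sum is dominated by the diagonal contribution, and since conjugation affects $\|A_\iii\|$ only by a factor independent of $|\iii|$, this factor is absorbed in the subadditive limit defining the pressure.
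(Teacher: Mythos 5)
The irreducible half of your argument is sound: the compactness argument you sketch (the norm analogue of Lemma \ref{thm:irreducibility}) does give quasi-multiplicativity of $\iii\mapsto\|A_\iii\|$, hence of $\|A_\iii\|^s$, and the Gibbs/uniqueness machinery behind Theorem \ref{thm:unique-eq} (in its general subadditive form from K\"aenm\"aki--Reeve and Feng) then yields a unique, fully supported equilibrium state; this is essentially the route of Feng and K\"aenm\"aki, whose Theorem 1.7 the paper cites in place of a proof. The genuine gap is your ``key quantitative step'' in the reducible case: the upper half of the sandwich estimate is false. Take $d=2$, $A_1=\left(\begin{smallmatrix}2&1\\0&1/2\end{smallmatrix}\right)$, $A_2=\left(\begin{smallmatrix}1/2&1\\0&2\end{smallmatrix}\right)$ and $\iii=1^n2^n$. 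Both $1\times 1$ diagonal blocks of $A_\iii$ equal $1$, while the $(1,2)$ entry is a sum of nonnegative terms, one of which is $2^{n-1}\cdot 1\cdot 2^{n}$, so $\|A_\iii\|\ge 2^{|\iii|-1}$ whereas $\max_j\|A_\iii^{(j,j)}\|=1$: the discrepancy is exponential in $|\iii|$, not polynomial. The obstruction is not the size of the off-diagonal blocks but that a cross term couples the block-$1$ product over a prefix with the block-$2$ product over the complementary suffix, and $\|A^{(1,1)}_{\iii_1}\|\,\|A^{(2,2)}_{\iii_2}\|$ can be exponentially larger than both $\|A^{(1,1)}_{\iii}\|$ and $\|A^{(2,2)}_{\iii}\|$. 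For the same reason your proposed repair cannot work: conjugating by $D_\eps$ changes $\|A_\iii\|$ only by a factor bounded independently of $|\iii|$ (in the example it multiplies the offending entry by the fixed constant $\eps$), so it cannot absorb an exponentially large cross term, and the claim that ``for sufficiently small $\eps$ the telescoping sum is dominated by the diagonal contribution'' fails for long words.

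The two consequences you actually use are nevertheless true, but they require different arguments. The pressure identity $P_{\mathsf{A}}(\|\cdot\|^s)=\max_j P_{\mathsf{A}^{(j,j)}}(\|\cdot\|^s)$ must be proved at the level of the partition sums $\sum_{\iii\in\Sigma_n}\|A_\iii\|^s$, where summing over all words factorises each cross term into products of partial sums and only a polynomial overhead appears, or else by the route the paper itself uses for $\varphi^s$ in Proposition \ref{thm:2-block}: conjugacy-invariance of the pressure together with Theorem \ref{le:cty} and upper semicontinuity of $\mathsf{B}\mapsto\lambda_{\mathsf{B}}(\|\cdot\|^s,\mu)$, applied to pressures and exponents rather than to individual norms. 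The exponent identity $\lambda_{\mathsf{A}}(\|\cdot\|^s,\mu)=\max_j\lambda_{\mathsf{A}^{(j,j)}}(\|\cdot\|^s,\mu)$ is the statement that the top Lyapunov exponent of a block-triangular cocycle is the maximum of the top exponents of its diagonal blocks; it holds for \emph{ergodic} $\mu$ and should be obtained from the subadditive ergodic theorem (or Oseledets) rather than by integrating a pointwise norm inequality, and it can fail for non-ergodic $\mu$ since maximum and ergodic decomposition do not commute, so you must restrict attention to ergodic equilibrium states from the outset. With these two repairs, your block-reduction architecture does deliver the conclusion: every ergodic $\|\cdot\|^s$-equilibrium state of $\mathsf{A}$ is the unique (fully supported) equilibrium state of a pressure-maximising irreducible diagonal block, giving at most $\ell\le d$ of them.
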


As remarked in \cite[\S 3]{FengKaenmaki2011}, it has the following corollary which further gives information about the structure of the set of all equilibrium states.

\begin{theorem} \label{thm:FeKa2}
  If $s \in (0,1) \cup (d-1,d) \cup \{ 0,\ldots,d \}$ and $\mathsf{A} \in GL_d(\R)^N$, then there exist at most $\binom{d}{s}$, if $s$ is an integer, and $d$, if otherwise, distinct ergodic $\fii^s$-equilibrium states of $\mathsf{A}$, and they are all fully supported.
\end{theorem}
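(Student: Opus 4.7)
The plan is to reduce each of the three parameter ranges appearing in the statement to Theorem~\ref{thm:FeKa} by expressing $\varphi^s$, up to a suitable rescaling, as a power of an operator norm on an exterior power of $\R^d$. I would split the argument into three cases: (i) $s \in (0,1)$; (ii) integer $s \in \{0,1,\ldots,d\}$; (iii) $s \in (d-1,d)$. For case (i), the definition \eqref{eq:def-svf} with $k=0$ gives $\varphi^s(A) = \|A\|^s$ pointwise, so the $\varphi^s$- and $\|\cdot\|^s$-pressures, Lyapunov exponents, and equilibrium states of $\mathsf{A}$ coincide verbatim, and Theorem~\ref{thm:FeKa} applied directly to $\mathsf{A}$ yields the bound $d$ with full support.

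For case (ii), either from \eqref{eq:def-svf} (with $s=k$, which makes the second factor equal $1$) or from the convention $\varphi^d(A) = |\det A| = \|A^{\wedge d}\|_d$, one reads off $\varphi^s(A) = \|A^{\wedge s}\|_s$. I would then pass to the auxiliary tuple $\mathsf{A}^{\wedge s} = (A_1^{\wedge s},\ldots,A_N^{\wedge s}) \in GL_{\binom{d}{s}}(\R)^N$. The morphism property \eqref{eq:morphism} gives $A_\iii^{\wedge s} = A_{i_1}^{\wedge s} \cdots A_{i_n}^{\wedge s}$ for every word $\iii = i_1\cdots i_n$, so $\varphi^s(A_\iii) = \|A_\iii^{\wedge s}\|_s$. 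This instantly produces $P_{\mathsf{A}}(\varphi^s) = P_{\mathsf{A}^{\wedge s}}(\|\cdot\|)$ and the analogous identity for Lyapunov exponents, whence the $\varphi^s$-equilibrium states of $\mathsf{A}$ are precisely the $\|\cdot\|$-equilibrium states of $\mathsf{A}^{\wedge s}$. Applying Theorem~\ref{thm:FeKa} to $\mathsf{A}^{\wedge s}$ in dimension $\binom{d}{s}$ returns the bound $\binom{d}{s}$ with full support.

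Case (iii) is the most subtle. Here \eqref{eq:def-svf} with $k = d-1$ rewrites as $\varphi^s(A) = |\det A|^{s-(d-1)}\,\|A^{\wedge(d-1)}\|^{d-s}$, and the plan is to absorb the determinant factor into a rescaled exterior power. Set $c_i = |\det A_i|^{(s-(d-1))/(d-s)} > 0$ and $B_i = c_i A_i^{\wedge(d-1)} \in GL_d(\R)$ (using $\binom{d}{d-1} = d$). Because the $c_i$ are positive scalars and $|\det(\cdot)|$ is multiplicative, $B_\iii = |\det A_\iii|^{(s-(d-1))/(d-s)} A_\iii^{\wedge(d-1)}$ for every finite word $\iii$, so $\|B_\iii\|^{d-s} = \varphi^s(A_\iii)$ identically. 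This identity at the level of words supplies $P_{\mathsf{A}}(\varphi^s) = P_{\mathsf{B}}(\|\cdot\|^{d-s})$ and the matching identity for Lyapunov exponents, so $\mu$ is a $\varphi^s$-equilibrium state of $\mathsf{A}$ if and only if it is a $\|\cdot\|^{d-s}$-equilibrium state of $\mathsf{B} = (B_1,\ldots,B_N)$. Theorem~\ref{thm:FeKa} applied to $\mathsf{B}$ with exponent $d-s$ then delivers the bound $d$ with full support.

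The only genuine obstacle is the bookkeeping for case (iii): one must check carefully that the scalar rescaling of the exterior powers really turns $\varphi^s$ into a power of $\|\cdot\|$ at the level of arbitrary finite products. Once this single identity for $\|B_\iii\|^{d-s}$ is in hand, all three cases reduce to direct applications of Theorem~\ref{thm:FeKa}, with the full-support conclusions propagating through the bijections of equilibrium states.
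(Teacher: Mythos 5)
Your proposal is correct and follows essentially the same route the paper intends: the paper derives this statement as a corollary of Theorem \ref{thm:FeKa} exactly as remarked in \cite[\S 3]{FengKaenmaki2011}, namely by identifying $\varphi^s$ with $\|\cdot\|^s$ for $s\in(0,1)$, with $\|A^{\wedge s}\|$ for integer $s$, and with a power of the norm of the determinant-rescaled $(d-1)$st exterior power for $s\in(d-1,d)$. Your bookkeeping in the third case (the identity $\|B_\iii\|^{d-s}=\varphi^s(A_\iii)$ at the level of words, via multiplicativity of the determinant and \eqref{eq:morphism}) is exactly the required verification, so nothing is missing.
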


Observe that together with the non-uniqueness observation \cite[Example 6.2]{KaenmakiVilppolainen2010} this immediately results in a two-dimensional version of Theorem \ref{thm:3d}.


\section{A geometric argument} \label{sec:alg}

In this section, we prove Theorem \ref{thm:intro-alg}. Recalling Lemma \ref{thm:irreducibility} and Theorem \ref{thm:unique-eq}, its proof follows after we have shown the following proposition.

\begin{proposition}\label{th:alg}
  Let $k \in \{ 0,\ldots,d-1 \}$ and $\emptyset \ne \mathcal{S} \subset GL_d(\R)$ be a $k$-irreducible and $(k+1)$-irreducible semigroup. If there exist nonzero $v_k, w_k \in \wedge^k \R^d$ and $v_{k+1}, w_{k+1} \in \wedge^{k+1} \R^d$ such that
  \begin{equation*}
    \la v_k, A^{\wedge k}w_k \ra_k \, \la v_{k+1}, A^{\wedge(k+1)}w_{k+1} \ra_{k+1} = 0,
  \end{equation*}
  for all $A \in \mathcal{S}$, then $\mathcal{S}$ is neither strongly $k$-irreducible nor strongly $(k+1)$-irreducible.
\end{proposition}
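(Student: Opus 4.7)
The plan is to use the Zariski closure of $\mathcal{S}$ in $M_d(\R)$ and exploit the fact that the hypothesis is a polynomial identity. Let $p_k(A) = \la v_k, A^{\wedge k}w_k\ra_k$ and $p_{k+1}(A) = \la v_{k+1}, A^{\wedge(k+1)}w_{k+1}\ra_{k+1}$; these are polynomials in the entries of $A$ because every $k\times k$ minor is. Let $\overline{\mathcal{S}}$ denote the Zariski closure of $\mathcal{S}$ in $M_d(\R)$. A standard argument using polynomial continuity of matrix multiplication shows $\overline{\mathcal{S}}$ is again a semigroup, and since the zero locus $\{p_kp_{k+1}=0\}$ is Zariski closed and contains $\mathcal{S}$, it contains $\overline{\mathcal{S}}$. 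Moreover, for any linear subspace $V\subset \wedge^k\R^d$ the set $\{A : A^{\wedge k}V\subseteq V\}$ is cut out by polynomial equations in $A$, and the same holds for any finite union of subspaces; hence $\mathcal{S}$-invariant subspaces (and finite unions thereof) are automatically $\overline{\mathcal{S}}$-invariant, and in particular $\overline{\mathcal{S}}$ is $k$- and $(k+1)$-irreducible. It therefore suffices to produce invariant finite unions of proper subspaces for $\overline{\mathcal{S}}$.

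Next I would show that $G := \overline{\mathcal{S}}\cap GL_d(\R)$ is a real algebraic group. For $A\in G$ the left-multiplication map $\mu_A\colon X\mapsto AX$ is a linear automorphism of $M_d(\R)$, and since $\overline{\mathcal{S}}$ is a semigroup containing $A$ we get the descending chain of Zariski closed sets $\overline{\mathcal{S}}\supseteq A\overline{\mathcal{S}}\supseteq A^2\overline{\mathcal{S}}\supseteq\cdots$. Noetherianity of $M_d(\R)$ forces stabilisation $A^N\overline{\mathcal{S}}=A^{N+1}\overline{\mathcal{S}}$, and cancelling by $\mu_{A^N}$ yields $\overline{\mathcal{S}}=A\overline{\mathcal{S}}$. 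Writing $A=AX$ with $X\in\overline{\mathcal{S}}$ forces $X=I\in\overline{\mathcal{S}}$, and then $I=A\cdot A^{-1}$ places $A^{-1}$ in $\overline{\mathcal{S}}$. Hence $G$ is a Zariski closed subgroup of $GL_d(\R)$ with only finitely many Zariski irreducible components; let $G^{\circ}$ denote the identity component, which is normal of finite index.

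Each coset $gG^{\circ}$ is an irreducible algebraic variety, and $p_kp_{k+1}$ vanishes identically on it; hence at least one of $p_k$ or $p_{k+1}$ vanishes identically on each coset. Let $V_k$ (respectively $V_{k+1}$) be the union of cosets on which $p_k$ (respectively $p_{k+1}$) vanishes; then $G=V_k\cup V_{k+1}$. Both sets are nonempty: if $V_{k+1}$ were empty then $p_k$ would vanish on all of $G$, so the orbit $G^{\wedge k}w_k$ would lie inside the hyperplane $\{y:\la v_k,y\ra_k=0\}$, giving a proper nonzero $G^{\wedge k}$-invariant subspace and contradicting $k$-irreducibility via Lemma~\ref{thm:irr_equiv1}; symmetrically $V_k\neq\emptyset$ from $(k+1)$-irreducibility.

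Now fix $g_0$ with $p_k|_{g_0G^{\circ}}\equiv 0$. Then $h^{\wedge k}w_k\in (g_0^{-1})^{\wedge k}\{y:\la v_k,y\ra_k=0\}$ for every $h\in G^{\circ}$, so
\begin{equation*}
  W_k := \linspan\{h^{\wedge k}w_k : h\in G^{\circ}\}
\end{equation*}
is a proper nontrivial $G^{\circ}$-invariant subspace of $\wedge^k\R^d$ (proper because it sits in a hyperplane, nontrivial because it contains $w_k$). Since $G^{\circ}$ is normal in $G$, for any $g\in G$ and $h\in G^{\circ}$ one has $h^{\wedge k}(g^{\wedge k}W_k) = g^{\wedge k}((g^{-1}hg)^{\wedge k}W_k) = g^{\wedge k}W_k$, so every translate $g^{\wedge k}W_k$ is again $G^{\circ}$-invariant, and the finite collection $\{g^{\wedge k}W_k : g\in G/G^{\circ}\}$ is permuted by $G^{\wedge k}$. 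Its union is therefore a finite $G^{\wedge k}$-invariant union of proper nontrivial subspaces, which shows $\mathcal{S}$ is not strongly $k$-irreducible. Starting instead from a coset in $V_{k+1}$ produces the corresponding invariant union in $\wedge^{k+1}\R^d$, giving the other non-strong irreducibility. The principal obstacle is establishing that $G$ is genuinely a group (not merely a semigroup) so that the coset decomposition is available; the Noetherian chain argument above is what makes this work, and the rest of the proof is a matter of tracing polynomial identities through the algebraic structure of $G$.
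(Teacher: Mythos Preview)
Your proof is correct and follows the same architecture as the paper's: pass to the Zariski closure $G$ of $\mathcal{S}$, show it is a group with finitely many components (your Noetherian descending-chain argument is essentially the content of the paper's Lemma~\ref{le:lie}), locate a component on which each factor $p_k$, $p_{k+1}$ vanishes, build from this a proper $G^\circ$-invariant subspace, and take its finite $G$-orbit. The one point of departure is how you split the product $p_kp_{k+1}=0$ on a component. The paper (Lemma~\ref{le:reps}) invokes von Neumann's closed-subgroup theorem to give $G$ a real-analytic structure, observes that each $X_i=\{p_i=0\}$ contains a nonempty open set, and then uses analyticity of $p_i$ to propagate the vanishing to an entire Euclidean connected component. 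You instead stay in the algebraic category: each coset $gG^\circ$ is Zariski-irreducible, so $p_kp_{k+1}\equiv 0$ on it forces one factor to vanish identically there. Your route is slightly more direct and avoids the detour through Lie theory, at the cost of appealing to the standard fact that the Zariski identity component of a real algebraic group is irreducible; the paper's analytic argument trades this for facts about real-analytic functions that some readers may find more familiar.
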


Before going into the proof, we remark that, instead of just being fully supported, the unique $\fii^s$-equilibrium state of $\mathsf{A}$ found in Theorem \ref{thm:intro-alg} satisfies a certain Gibbs property. The original formulation of Theorem \ref{thm:unique-eq} in \cite{KaenmakiReeve2014} implies this immediately. Since this fact is not needed in our considerations, we only state it in the following remark for possible future reference.

\begin{remark} \label{rem:gibbs}
  Let $k \in \{ 0,\ldots,d-1 \}$, $k<s<k+1$, and $\mathsf{A}\in GL_d(\mathbb{R})^N$. If $\mathsf{A}^{\wedge k}$ and $\mathsf{A}^{\wedge(k+1)}$ are both irreducible, and one of them is strongly irreducible, then there exists a unique $\varphi^s$-equilibrium state $\mu$ of $\mathsf{A}$ and it satisfies the following property: there exists $C \ge 1$ depending only on $\mathsf{A}$ and $s$ such that
  \begin{equation*}
    C^{-1}e^{-nP_{\mathsf{A}}(\fii^s)} \fii^s(A_\iii) \le \mu([\iii]) \le Ce^{-nP_{\mathsf{A}}(\fii^s)} \fii^s(A_\iii)
  \end{equation*}
  for all $\iii \in \Sigma_*$.
\end{remark}

We recall some elementary facts of algebraic geometry. Let us say that a function $p \colon GL_d(\mathbb{R}) \to \mathbb{R}$ is a \emph{polynomial} if it maps each matrix $A=[a_{ij}]_{i,j=1}^d$ to the same polynomial function of the variables\footnote{The conventional inclusion of the variable $(\det A)^{-1}$ is motivated by the fact that each entry of the matrix $A^{-1}$ is then a polynomial function of the matrix $A$. However, our interest in polynomials is essentially restricted to their use in defining the Zariski topology on $GL_d(\mathbb{R})$. In particular since $\det A$ is itself a polynomial function of the variables $a_{11},\ldots,a_{dd}$, the class of Zariski-closed sets which we consider is unaffected if the variable $(\det A)^{-1}$ is omitted.} $a_{11},\ldots,a_{dd}$ and $(\det A)^{-1}$. The \emph{Zariski topology} on $GL_d(\mathbb{R})$ is then defined to be the smallest topology in which every set of the form $\{A \in GL_d(\mathbb{R}) : p(A)=0\}$ is closed. The Zariski topology has the following important property, called the \emph{descending chain condition}: if $(Z_n)_{n=1}^\infty$ is a sequence of Zariski-closed sets such that $Z_{n+1} \subset Z_n$ for every $n \in \N$, then $(Z_n)_{n=1}^\infty$ is eventually constant. This property implies that a set is Zariski closed if and only if it is the intersection of the zero loci of a finite collection of polynomials. 

The following result is well-known, but we include a proof for the convenience of the reader who may be unfamiliar with algebraic geometry.

\begin{lemma}\label{le:lie}
Let $\mathcal{S}\subset GL_d(\mathbb{R})$ be a semigroup. Then the Zariski closure of $\mathcal{S}$ is a Lie group and has finitely many connected components.
\end{lemma}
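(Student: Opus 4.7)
The plan is to show that the Zariski closure $G$ of $\mathcal{S}$ is in fact a subgroup of $GL_d(\R)$; the Lie structure will then follow from Cartan's closed subgroup theorem, and the finiteness of components from the classical theorem of Whitney that every real algebraic set has only finitely many Euclidean connected components. To verify that $G$ is closed under multiplication, I would first fix $A \in \mathcal{S}$ and observe that the left translation $L_A \colon X \mapsto AX$ is polynomial in the entries of $X$, hence continuous in the Zariski topology. Since $\mathcal{S}$ is a semigroup, $L_A(\mathcal{S}) \subset \mathcal{S} \subset G$, and because $G$ is Zariski closed this forces $L_A(G) \subset G$; thus $\mathcal{S}\cdot G \subset G$. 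Then for any $B \in G$ the set $\{X \in GL_d(\R) : XB \in G\}$ is the preimage of $G$ under the polynomial right translation $X \mapsto XB$, so it is Zariski closed, and it contains $\mathcal{S}$ by the previous step; hence it contains $G$, giving $G B \subset G$ and altogether $G \cdot G \subset G$.

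To produce inverses I would fix $A \in G$ and examine the descending chain of Zariski-closed sets
\begin{equation*}
  G \supset AG \supset A^2 G \supset \cdots,
\end{equation*}
where the nesting uses the semigroup property $AG \subset G$ just established, and Zariski closedness of each $A^k G$ follows from the fact that $L_{A^k}$ is a Zariski homeomorphism of $GL_d(\R)$. By the descending chain condition recalled immediately before the lemma, this chain stabilises: there is some $n$ with $A^n G = A^{n+1} G$. In particular $A^n = A^{n+1} g$ for some $g \in G$, and left-multiplication by $A^{-(n+1)}$ inside $GL_d(\R)$ gives $A^{-1} = g \in G$. Hence $G$ is a subgroup of $GL_d(\R)$.

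Finally, since every polynomial is continuous in the Euclidean topology, Zariski-closed sets are Euclidean closed; so $G$ is a closed subgroup of the Lie group $GL_d(\R)$, and Cartan's closed subgroup theorem upgrades it to an embedded Lie subgroup. Moreover, as already observed in the excerpt, the descending chain condition implies that $G$ is cut out by finitely many polynomial equations, so $G$ is a real algebraic subset of $\R^{d^2}$, and Whitney's theorem guarantees that such a set has only finitely many Euclidean connected components. The main conceptual step is the semigroup-to-group passage via the stabilising chain, which is really the only place where the invertibility of elements of $\mathcal{S}$ inside $GL_d(\R)$ is used in an essential way; once this is in place everything else is a direct appeal to classical facts about Lie groups and real algebraic geometry, and I do not anticipate any serious obstacle beyond marshalling these citations cleanly.
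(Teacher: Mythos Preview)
Your proposal is correct and follows essentially the same route as the paper: closure under multiplication via Zariski continuity of one-sided translations, inverses via stabilisation of the descending chain $A^nG$, then the closed subgroup theorem (the paper attributes it to von Neumann, you to Cartan) and a finiteness-of-components result for real algebraic sets (the paper cites Benedetti--Risler, you cite Whitney). The only step that deserves an extra line is the deduction ``$A^n=A^{n+1}g$ for some $g\in G$'': as written this presumes $A^n\in A^nG$, i.e.\ $I\in G$, which has not yet been shown; one should first note that $A^{n+1}=A^n\cdot A\in A^nG=A^{n+1}G$ forces $I\in G$, after which your conclusion follows immediately.
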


\begin{proof}
In this proof all closures are taken with respect to the Zariski topology. To avoid triviality we assume $\mathcal{S}$ to be nonempty. We observe that if $A \in GL_d(\mathbb{R})$ then the preimage under left-multiplication by $A$ of a Zariski-closed set $Z=\bigcap_{p \in P} \{B \in GL_d(\mathbb{R})\colon p(B)=0\}$ is the set $A^{-1}Z=\bigcap_{p \in P} \{B \in GL_d(\mathbb{R})\colon p(AB)=0\}$, which is also Zariski-closed since $B \mapsto p(AB)$ is a polynomial. It follows that left multiplication by $A$ defines a Zariski continuous map from $GL_d(\mathbb{R})$ to itself. Since left-multiplication by $A^{-1}$ is also Zariski continuous it follows that left-multiplicaiton by any $A \in GL_d(\mathbb{R})$ induces a Zariski homeomorphism of $GL_d(\mathbb{R})$. Similar remarks apply to right multiplication, and we deduce that in particular $\overline{AX}=A\overline{X}$ and $\overline{XA}=\overline{X}A$ for every set $X \subset GL_d(\mathbb{R})$.

To begin the proof of the lemma let us show that the Zariski closure $\overline{\mathcal{S}}$ is a semigroup, that is, that $AB \in \overline{\mathcal{S}}$ for all $A,B \in \overline{\mathcal{S}}$. Obviously $\mathcal{S}A\subset \mathcal{S}$ for all $A \in \mathcal{S}$ and therefore $\overline{\mathcal{S}}A=\overline{\mathcal{S}A}\subset \overline{\mathcal{S}}$ for every $A \in \mathcal{S}$. Thus $\overline{\mathcal{S}}\mathcal{S}\subset \overline{\mathcal{S}}$. If $A \in \overline{S}$ we thus have in particular $A\mathcal{S}\subset \overline{\mathcal{S}}$ and therefore $A\overline{\mathcal{S}}=\overline{A\mathcal{S}}\subset \overline{\mathcal{S}}$. We conclude that for every $A,B \in \overline{\mathcal{S}}$ we have $AB \in \overline{\mathcal{S}}$ as claimed.

Let us now show that $\overline{\mathcal{S}}$ is in fact a group, for which it suffices to show that $A^{-1}\overline{\mathcal{S}}\subset \overline{\mathcal{S}}$ for every $A \in \overline{\mathcal{S}}$. Let $A \in \overline{\mathcal{S}}$. The sequence of sets $(A^n\overline{\mathcal{S}})_{n=1}^\infty$ is a descending sequence of Zariski-closed subsets of $GL_d(\mathbb{R})$ and by the descending chain condition it is eventually constant. Thus $A^{n}\overline{\mathcal{S}}= A^{n+1}\overline{\mathcal{S}}$ for some integer $n$, and by left multiplication by $A^{-n-1}$ we have $A^{-1}\overline{\mathcal{S}}=\overline{\mathcal{S}}$. It follows that $\overline{\mathcal{S}}$ is a group as claimed.

Since $\overline{\mathcal{S}}$ is Zariski closed it is the intersection of the zero loci of some finite collection of real polynomials. Such a set is well-known to have only finitely many connected components with respect to the standard topology (see e.g.\ \cite{BenedettiRisler1990}). It therefore remains only to show that $\overline{S}$ is a Lie group: but since $\overline{S}$ is closed in the Zariski topology it is closed in the standard topology, and by a celebrated theorem of von Neumann (\cite[Theorem 20.10]{Lee2013}) every subgroup of $GL_d(\mathbb{R})$ which is closed in the standard topology is a Lie group.
\end{proof}

We recall that every real Lie group admits a natural real-analytic structure and that every Lie group homomorphism is analytic. If $G$ is a Lie group then we let $G^\circ$ denote the identity component of $G$, that is, the unique connected component of $G$ which contains the identity element. Recall that, if $V$ is a real vector space, then $\Aut(V)$ is the group of all automorphisms of $V$, i.e.\ the set of all bijective linear transformations $V \to V$, together with functional composition as group operation. The set $\End(V)$ is the collection of all endomorphisms of $V$, i.e.\ the collection of all linear transformations $V \to V$. Recall that a representation of a Lie group $G$ on a finite-dimensional vector space $V$ over $\mathbb{R}$ is a group homomorphism $\rho \colon G \to \Aut(V)$. We say that a Lie group representation $\rho \colon G \to \Aut(V)$ is \emph{irreducible} if $\rho(G)$ is irreducible, that is, if there is no proper nonzero subspace of $V$ which is preserved by every element of $\rho(G)$.

\begin{lemma}\label{le:reps}
Let $G$ be a real Lie group and for each $i\in\{1,2\}$ let $V_i$ be a finite-dimensional real vector space with inner product $\langle \cdot,\cdot\rangle_{V_i}$, $\rho_i\colon G \to \Aut(V_i)$ an irreducible Lie group representation, and $u_i,v_i \in V_i$ nonzero vectors. Suppose that
\[
  \langle u_1,\rho_1(g)v_1\rangle_{V_1}\,\langle u_2,\rho_2(g)v_2\rangle_{V_2}=0
\]
for all $g \in G$. Then for each $i\in\{1,2\}$ there exists a nonzero vector $\hat{v}_i \in V_i$ such that $\langle u_i,\rho_i(g)\hat{v}_i\rangle_{V_i}=0$ for all $g \in G^\circ$.
\end{lemma}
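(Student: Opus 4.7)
The plan is to exploit the real-analyticity of the matrix coefficients $c_i(g) := \langle u_i, \rho_i(g) v_i\rangle_{V_i}$. Each $c_i$ is real-analytic on $G$ (since a Lie group homomorphism into $\Aut(V_i)$ is automatically analytic), and the hypothesis is precisely that the product $c_1 c_2$ vanishes identically on $G$.

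First I would analyze the situation component by component. Every connected component of $G$ is a coset $gG^\circ$, homeomorphic via left translation to the connected real-analytic manifold $G^\circ$, and on each such component the product $c_1 c_2$ vanishes identically. The identity theorem for analytic functions on a connected manifold then forces one of $c_1, c_2$ to vanish identically on that component (if $c_1$ is nonzero somewhere in the component it is nonzero on an open set, on which $c_2$ must vanish, and hence $c_2$ vanishes on the whole component). Accordingly, I would let $\mathcal{A}_i$ denote the collection of components on which $c_i \equiv 0$, so that $\mathcal{A}_1 \cup \mathcal{A}_2$ exhausts the full set of connected components of $G$. Next, using irreducibility of $\rho_i$, I would rule out the possibility that either $\mathcal{A}_i$ contains \emph{every} component: if $c_i \equiv 0$ on all of $G$, then $u_i$ is orthogonal to the $G$-invariant subspace $\linspan\{\rho_i(g)v_i : g \in G\}$, which by irreducibility of $\rho_i$ and $v_i \neq 0$ must equal $V_i$, forcing $u_i = 0$ and contradicting the hypothesis. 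Combined with the covering property $\mathcal{A}_1 \cup \mathcal{A}_2 = \{\text{all components}\}$ this shows that each $\mathcal{A}_i$ is a nonempty proper subcollection of components.

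Finally, for each $i$ I would pick any component $C_i \in \mathcal{A}_i$ and any representative $g_i \in C_i$, and set $\hat v_i := \rho_i(g_i) v_i \neq 0$. Since $G^\circ$ is normal in $G$, for every $h \in G^\circ$ we have $h g_i \in G^\circ g_i = g_i G^\circ = C_i$, whence
$\langle u_i, \rho_i(h) \hat v_i\rangle_{V_i} = c_i(h g_i) = 0$, as required. The main conceptual obstacle is that the naive analytic-vanishing argument, if applied only on $G^\circ$, produces a valid vector for only \emph{one} of the two representations (whichever of $c_1, c_2$ happens to vanish on $G^\circ$); the component-by-component refinement, together with normality of $G^\circ$ to translate a coset-vanishing statement into a $G^\circ$-vanishing statement, is the key ingredient that yields vectors for \emph{both} indices simultaneously.
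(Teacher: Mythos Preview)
Your proof is correct and follows essentially the same approach as the paper's: both use real-analyticity of the matrix coefficients $c_i$, irreducibility to rule out $c_i\equiv 0$ on all of $G$, and the identity theorem on connected components to locate a component $H_i$ on which $c_i$ vanishes, then translate by some $g_i\in H_i$ to produce $\hat v_i=\rho_i(g_i)v_i$. The only cosmetic difference is that the paper first observes each zero set $X_i$ contains a nonempty open set (since its complement is a proper closed set) and then extends by analyticity, whereas you argue component-by-component from the outset; the logical content is identical.
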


\begin{proof}
Let us define $X_i=\{g \in G : \langle u_i,\rho_i(g)v_i\rangle_{V_i}=0\}$ for $i\in\{1,2\}$. Obviously $G=X_1\cup X_2$ and each $X_i$ is closed. We claim that neither $X_i$ is equal to $G$. Indeed, if $X_i=G$ then defining $U_i=\linspan\{\rho_i(g)v_i \colon g\in G\}\subset V_i$ we find that $\rho_i(g)U_i=U_i$ for every $g \in G$. Since $U_i$ is contained in the orthogonal complement of $u_i\neq 0$ it is a proper subspace of $V_i$, and since $U_i$ contains $v_i \neq 0$ it is not the zero subspace. It follows that the representation $\rho_i \colon G \to \Aut(V_i)$ is reducible, contradicting the hypotheses of the lemma. This completes the proof of the claim.

Since $G=X_1 \cup X_2$ and $X_1$ is a closed proper subset of $G$, the set $X_2$ contains the nonempty open set $G \setminus X_1$. Similarly $X_1$ contains the nonempty open set $G \setminus X_2$. For each $i\in\{1,2\}$ let $\ell_i \colon \mathrm{End}(V_i) \to \mathbb{R}$ be the linear mapping given by $\ell_i(A)=\langle u_i,Av_i\rangle_{V_i}$ for every $A \in \mathrm{End}(V_i)$. The composition $\ell_i \circ \rho_i \colon G \to \mathbb{R}$ is consequently real analytic and is zero on $X_i$, which contains a nonempty open set. By analyticity it therefore follows that $\ell_i \circ \rho_i$ is zero on a connected component $H_i$ of $G$. 

For each $i\in\{1,2\}$ choose any $g_i \in H_i$. Since right multiplication by $g_i$ induces a homeomorphism of $G$ the set $G^\circ g_i$ is a closed and open subset of $G$, and since $G^\circ$ contains the identity, $G^\circ g_i$ contains $g_i$. It follows that $G^\circ g_i=H_i$ for all $i\in\{1,2\}$. In particular, we have
\[
  \langle u_i,\rho_i(g)\rho_i(g_i)v_i\rangle_{V_i}=\langle u_i,\rho_i(gg_i)v_i\rangle_{V_i}=0
\]
for every $g \in G^\circ$. Taking $\hat{v}_i=\rho_i(g_i)v_i \in V_i$ completes the proof of the lemma.
\end{proof}

\begin{proof}[Proof of Proposition \ref{th:alg}]
Let $G$ be the Zariski closure of $\mathcal{S}$, which by Lemma \ref{le:lie} is a Lie subgroup of $GL_d(\mathbb{R})$ with finitely many connected components. The set of all $A \in GL_d(\mathbb{R})$ such that
\begin{equation}\label{eq:prodzero}\la v_k, A^{\wedge k}w_k \ra_k \, \la v_{k+1}, A^{\wedge(k+1)}w_{k+1} \ra_{k+1} = 0\end{equation}
is the zero set of a polynomial function $GL_d(\mathbb{R}) \to \mathbb{R}$ and hence is Zariski closed. Since this set contains $\mathcal{S}$, it contains the Zariski closure of $\mathcal{S}$ and therefore every $A$ in $G$ satisfies \eqref{eq:prodzero}.

Define Lie group representations $\rho_1 \colon G \to \Aut(\wedge^k\mathbb{R}^d)$ and $\rho_2 \colon G \to \Aut(\wedge^{k+1}\mathbb{R}^d)$ by $A \mapsto A^{\wedge k}$ and $A\mapsto A^{\wedge(k+1)}$, respectively. If $\rho_1$ were reducible then $G$, and hence $\mathcal{S}$, would preserve a proper nonzero linear subspace of $\wedge^k\mathbb{R}^d$, contradicting the hypothesis that $\mathcal{S}$ is $k$-irreducible. It follows that $\rho_1$ is an irreducible representation, and similarly $\rho_2$ is irreducible since $\mathcal{S}$ is $(k+1)$-irreducible.  Lemma \ref{le:reps} thus implies that there exist nonzero $\hat{w}_k \in \wedge^k\mathbb{R}^d$ and $\hat{w}_{k+1}\in \wedge^{k+1}\mathbb{R}^d$ such that $\la v_k, A^{\wedge k}\hat{w}_k \ra_k=0$ and $\la v_{k+1}, A^{\wedge(k+1)}\hat{w}_{k+1} \ra_{k+1} = 0$ for all $A \in G^\circ$.

Let us define linear subspaces $U_1$ of $ \wedge^k\mathbb{R}^d$ and $U_2$ of  $\wedge^{k+1}\mathbb{R}^d$ by
\[
  U_1=\linspan\{A^{\wedge k}\hat{w}_k : A \in G^\circ\} \quad \text{and} \quad U_2=\linspan\{A^{\wedge(k+1)}\hat{w}_{k+1} : A \in G^\circ\}.
\]
Clearly $U_1$ is a proper subspace of $\wedge^k\mathbb{R}^d$ since it is contained in the orthogonal complement of $v_k$, and similarly $U_2$ is a proper subspace of $\wedge^{k+1}\mathbb{R}^d$. It is also clear that $A^{\wedge k}U_1\subset U_1$ and $A^{\wedge (k+1)}U_2\subset U_2$ for every $A \in G^\circ$, which by the invertibility of $A$ implies 
$A^{\wedge k}U_1=U_1$ and $A^{\wedge (k+1)}U_2=U_2$ for every $A \in G^\circ$. 

We claim that if $A,B \in G$ belong to the same connected component of $G$ then $A^{\wedge k}U_1=B^{\wedge k}U_1$. To see this fix $B \in G$ and note that $BG^\circ$ is a closed and open subset of $G$ which contains $B$; thus, $BG^\circ$ is the connected component of $B$. In particular if $A$ is in the same component as $B$ then $A \in BG^\circ$ and therefore $B^{-1}A \in G^\circ$. It follows that $(B^{-1}A)^{\wedge k}U_1=U_1$ and therefore $A^{\wedge k}U_1=B^{\wedge k}U_1$. In the same manner it follows that if $A,B \in G$ belong to the same connected component of $G$ then $A^{\wedge (k+1)}U_2=B^{\wedge (k+1)}U_2$.

Let us define
\[
  X_1=\bigcup_{A \in G} A^{\wedge k}U_1 \quad \text{and} \quad X_2=\bigcup_{A \in G} A^{\wedge (k+1)}U_2.
\]
Obviously $A^{\wedge k}X_1=X_1$ and $A^{\wedge(k+1)}X_2=X_2$ for every $A \in G$. By the preceding claim, the number of distinct subspaces $A^{\wedge k}U_1$ as $A$ ranges over $G$ is no greater than the number of connected components of $G$, which is finite. Thus $X_1$ is equal to the union of finitely many proper nonzero linear subspaces of $\wedge^k\mathbb{R}^d$. Similarly $X_2$ is a finite union of proper nonzero subspaces of $\wedge^{k+1}\mathbb{R}^d$. Since $X_1$ is preserved by $A^{\wedge k}$ for every $A \in \mathcal{S}\subset G$ it follows that $\mathcal{S}$ is not strongly $k$-irreducible, and by similar consideration of $X_2$, $\mathcal{S}$ is not strongly $(k+1)$-irreducible. The proof of the proposition is complete.
\end{proof}


\section{Permutation matrices} \label{sec:permutation}

In this section, we prove Theorem \ref{thm:intro-perm} by showing that the $\varphi^s$-equilibrium states of generalised permutation matrices can be understood via the $\|\cdot\|$-equilibrium states of certain auxiliary matrices. Recall that $P_d(\R) \subset GL_d(\mathbb{R})$ is the group of generalised permutation matrices and that $A \in P_d(\R)$ if every row and every column of $A$ has exactly one nonzero entry. Observe that $A$ is a generalised permutation matrix if and only if there exist a bijection $\pi_A \colon \{1,\ldots,d\} \to \{1,\ldots,d\}$ and nonzero real numbers $a_1,\ldots,a_d$ such that $Ae_i=a_ie_{\pi_A(i)}$ for all $i \in \{1,\ldots,d\}$. If $A, B \in P_d(\R)$, then clearly
\begin{equation} \label{eq:permutation-product}
  ABe_i=a_{\pi_B(i)}b_ie_{(\pi_A \circ \pi_B)(i)}
\end{equation}
for all $i\in\{1,\ldots,d\}$.

Fix $k \in \{ 0,\ldots,d-1 \}$,  $k < s < k+1$, and $d'=(d-k)\binom{d}{k}$. Let $S_{k,d}$ be the set of all $k$-combinations of $\{ 1,\ldots,d \}$. Denote the standard basis of $\R^d$ by $\{ e_1,\ldots,e_d \}$ and let the standard basis of $\R^{d'}$ be relabelled as
\begin{equation*}
  \{ e_{S,i} : S \in S_{k,d} \text{ and } i \in \{ 1,\ldots,d \} \setminus S \}.
\end{equation*}
If $A \in P_d(\R)$, then let $\pi_A \colon \{1,\ldots,d\} \to \{1,\ldots,d\}$ be a bijection and $a_1,\ldots,a_d$ nonzero real numbers such that $Ae_i = a_ie_{\pi_A(i)}$ for all $i \in \{ 1,\ldots,d \}$. Setting
\begin{equation*}
  \mathfrak{h}_s(A)e_{S,i} = \biggl(\prod_{j \in S}|a_j|\biggr)|a_i|^{s-k} e_{\pi_A(S),\pi_A(i)}
\end{equation*}
defines a mapping $\mathfrak{h}_s \colon P_d(\R) \to P_{d'}(\R)$.

\begin{lemma} \label{thm:ddim-permutation}
  The mapping $\mathfrak{h}_s \colon P_d(\R) \to P_{d'}(\R)$ is a homomorphism and $\varphi^s(A)=\|\mathfrak{h}_s(A)\|$ for all $A \in P_d(\R)$.
\end{lemma}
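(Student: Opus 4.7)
The plan is to verify the homomorphism property by direct computation with the defining action on the basis $\{e_{S,i}\}$, and to establish the norm identity by identifying both sides with the same combinatorial maximum over choices of a $k$-subset together with an extra index.

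First I would unpack the multiplication in $P_d(\R)$. Given $A,B \in P_d(\R)$ with $Ae_i = a_i e_{\pi_A(i)}$ and $Be_i = b_i e_{\pi_B(i)}$, formula \eqref{eq:permutation-product} identifies $\pi_{AB} = \pi_A \circ \pi_B$ with multipliers $c_i = a_{\pi_B(i)} b_i$. Applying the definition of $\mathfrak{h}_s$ to $AB$ and expanding $\mathfrak{h}_s(A)\mathfrak{h}_s(B)e_{S,i}$ in two steps, the key identity to observe is that, since $\pi_B$ is a bijection of $\{1,\ldots,d\}$,
\[
  \prod_{j' \in \pi_B(S)} |a_{j'}| = \prod_{j \in S} |a_{\pi_B(j)}|.
\]
Combining this with $|c_j| = |a_{\pi_B(j)}|\,|b_j|$ and $|c_i|^{s-k} = |a_{\pi_B(i)}|^{s-k}|b_i|^{s-k}$ yields equality of $\mathfrak{h}_s(AB)e_{S,i}$ and $\mathfrak{h}_s(A)\mathfrak{h}_s(B)e_{S,i}$ for every basis element, so $\mathfrak{h}_s$ is a homomorphism.

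For the norm identity, the crucial observation is that both $A$ and $\mathfrak{h}_s(A)$ are generalised permutation matrices, so their columns are pairwise orthogonal and their operator norms equal the largest absolute value appearing as a column scalar. Hence
\[
  \|\mathfrak{h}_s(A)\| = \max\Bigl\{\Bigl(\prod_{j \in S}|a_j|\Bigr)|a_i|^{s-k} : S \in S_{k,d},\ i \in \{1,\ldots,d\}\setminus S\Bigr\}.
\]
On the other side, since $A \in P_d(\R)$ permutes the coordinate axes, the singular values $\alpha_1(A) \ge \cdots \ge \alpha_d(A)$ are just the values $|a_1|,\ldots,|a_d|$ sorted in decreasing order, so by \eqref{eq:def-svf} we have $\fii^s(A) = \alpha_1(A)\cdots\alpha_k(A)\,\alpha_{k+1}(A)^{s-k}$.

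It remains to identify the two quantities, which is a simple rearrangement argument. Given any admissible pair $(S,i)$, if there is some $j \in S$ with $|a_j| < |a_i|$ then swapping the roles of $j$ and $i$ replaces the factor $|a_j|\,|a_i|^{s-k}$ by $|a_i|\,|a_j|^{s-k}$; using $0 < 1-(s-k) < 1$ one sees that the latter is at least the former, so the maximum is attained by taking $S$ to be the indices of the $k$ largest $|a_j|$ and $i$ the index of the $(k+1)$st largest, giving exactly $\alpha_1(A)\cdots\alpha_k(A)\,\alpha_{k+1}(A)^{s-k} = \fii^s(A)$. (Alternatively, one may appeal directly to \eqref{eq:svd-formula}, using that the standard basis already diagonalises $A^TA$.) I do not anticipate serious obstacles here; the whole argument is bookkeeping with permutations plus one monotonicity estimate.
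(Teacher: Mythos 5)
Your proposal is correct and follows essentially the same route as the paper: the homomorphism property is verified by the same direct computation on the basis $\{e_{S,i}\}$ using $\prod_{j'\in\pi_B(S)}|a_{j'}|=\prod_{j\in S}|a_{\pi_B(j)}|$, and the norm identity is obtained by identifying $\|\mathfrak{h}_s(A)\|$ with the maximum of $\bigl(\prod_{j\in S}|a_j|\bigr)|a_i|^{s-k}$ over admissible pairs $(S,i)$, which equals $\fii^s(A)$ since the singular values of $A$ are the sorted $|a_j|$. Your explicit swap/monotonicity argument is a fine (slightly more hands-on) substitute for the paper's implicit appeal to \eqref{eq:svd-formula}, which you also note as an alternative.
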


\begin{proof}
  The norm of a generalised permutation matrix $A \in P_d(\R)$ is simply the maximum of the absolute values of its entries, and the singular values are the absolute values of the nonzero entries listed in decreasing order. It follows that
  \[
    \varphi^s(A) = \max_{\substack{1 \leq i_1,\ldots,i_{k+1}\leq d\\ i_j \neq i_{\ell}}} |a_{i_1}\cdots a_{i_k}| |a_{i_{k+1}}|^{s-k}=\max_{\substack{S \in S_{k,d}\\ i \in \{ 1,\ldots,d \} \setminus S}} \biggl(\prod_{j\in S}|a_j|\biggr)|a_i|^{s-k}=\|\mathfrak{h}_s(A)\|.
  \]
  To check that $\mathfrak{h}_s$ is a homomorphism, let $A, B \in P_d(\R)$ respectively satisfy $Ae_i=a_ie_{\pi_A(i)}$ and $Be_i=b_ie_{\pi_B(i)}$ for all $i \in \{1,\ldots,d\}$. For every $S \subset \{1,\ldots,d\}$ and $i \in \{1,\ldots,d\}\setminus S$ we have
  \[
    \mathfrak{h}_s(B)e_{S,i} =\biggl(\prod_{j \in S}|b_j|\biggr)|b_i|^{s-k} e_{\pi_B(S),\pi_B(i)}
  \]
  and similarly for $A$. Therefore,
  \[
    \mathfrak{h}_s(A)\mathfrak{h}_s(B)e_{S,i}=\biggl(\prod_{j' \in \pi_B(S)} |a_{j'}|\biggr)\biggl(\prod_{j \in S} |b_j|\biggr)|a_{\pi_B(i)}|^{s-k}|b_i|^{s-k} e_{(\pi_A\circ \pi_B)(S),(\pi_A\circ \pi_B)(i)}.
  \]
  On the other hand, by \eqref{eq:permutation-product}, we have
  \[
    \mathfrak{h}_s(AB)e_{S,i}=\biggl(\prod_{j \in S} |a_{\pi_B(j)}b_j|\biggr)|a_{\pi_B(i)}b_i|^{s-k} e_{(\pi_A\circ \pi_B)(S),(\pi_A\circ \pi_B)(i)}
  \]
  and hence, $\mathfrak{h}_s(AB)=\mathfrak{h}_s(A)\mathfrak{h}_s(B)$ as required.
\end{proof}

With the auxiliary matrices given by $\mathfrak{h}_s$, we may now apply Theorem \ref{thm:FeKa}.

\begin{proposition}\label{co:permutation-equilibrium}
  Let $k \in \{ 0,\ldots,d-1 \}$, $k < s < k+1$, and $\mathsf{A}=(A_1,\ldots,A_N)\in P_d(\mathbb{R})^N$. Then $\mu$ is a $\varphi^s$-equilibrium state of $\mathsf{A}$ if and only if it is a $\|\cdot\|$-equilibrium state of $\mathfrak{h}_s(\mathsf{A}) = (\mathfrak{h}_s(A_1),\ldots,\mathfrak{h}_s(A_N))$. In particular, there are at most $(d-k)\binom{d}{k}$ distinct ergodic $\varphi^s$-equilibrium states of $\mathsf{A}$ and they are all fully supported.
\end{proposition}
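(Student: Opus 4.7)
The plan is to exploit Lemma \ref{thm:ddim-permutation} in order to transfer the problem of describing $\varphi^s$-equilibrium states of $\mathsf{A}$ to the easier problem of describing $\|\cdot\|$-equilibrium states of the auxiliary tuple $\mathfrak{h}_s(\mathsf{A}) \in GL_{d'}(\R)^N$, to which Theorem \ref{thm:FeKa} applies directly.

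First I would observe that, since $\mathfrak{h}_s \colon P_d(\R) \to P_{d'}(\R)$ is a homomorphism, for every $n \in \N$ and every $\iii = i_1 \cdots i_n \in \Sigma_n$ we have $\mathfrak{h}_s(A_\iii) = \mathfrak{h}_s(A_{i_1}) \cdots \mathfrak{h}_s(A_{i_n}) = \mathfrak{h}_s(\mathsf{A})_\iii$. Combined with the identity $\varphi^s(A) = \|\mathfrak{h}_s(A)\|$ from Lemma \ref{thm:ddim-permutation}, this yields the crucial pointwise equality
\begin{equation*}
  \varphi^s(A_\iii) = \|\mathfrak{h}_s(\mathsf{A})_\iii\|
\end{equation*}
for every $\iii \in \Sigma_*$.

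From this identity, passing to the definitions in \S \ref{sec:svf-pressure} and \S \ref{sec:equilibrium}, I would deduce the two key equalities
\begin{equation*}
  P_{\mathsf{A}}(\varphi^s) = P_{\mathfrak{h}_s(\mathsf{A})}(\|\cdot\|) \quad \text{and} \quad \lambda_{\mathsf{A}}(\varphi^s, \mu) = \lambda_{\mathfrak{h}_s(\mathsf{A})}(\|\cdot\|, \mu)
\end{equation*}
for every $\mu \in \MM_\sigma(\Sigma)$. Indeed, both statements follow by substituting $\varphi^s(A_\iii) = \|\mathfrak{h}_s(\mathsf{A})_\iii\|$ into the relevant limits. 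Consequently, the variational principles $P_{\mathsf{A}}(\varphi^s) = h(\mu) + \lambda_{\mathsf{A}}(\varphi^s, \mu)$ and $P_{\mathfrak{h}_s(\mathsf{A})}(\|\cdot\|) = h(\mu) + \lambda_{\mathfrak{h}_s(\mathsf{A})}(\|\cdot\|, \mu)$ are equivalent, giving the first claim: $\mu$ is a $\varphi^s$-equilibrium state of $\mathsf{A}$ if and only if it is a $\|\cdot\|$-equilibrium state of $\mathfrak{h}_s(\mathsf{A})$.

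For the counting and full-support statements, I would then apply Theorem \ref{thm:FeKa} with exponent $s = 1$ to the tuple $\mathfrak{h}_s(\mathsf{A}) \in GL_{d'}(\R)^N$, where $d' = (d-k)\binom{d}{k}$. The theorem gives at most $d'$ distinct ergodic $\|\cdot\|$-equilibrium states of $\mathfrak{h}_s(\mathsf{A})$, all of them fully supported; by the equivalence above these are exactly the ergodic $\varphi^s$-equilibrium states of $\mathsf{A}$, completing the proof. There is no real obstacle here — the work has been done in Lemma \ref{thm:ddim-permutation}, which cleverly encodes the singular value function of a generalised permutation matrix as an operator norm in a higher-dimensional space where Feng--K\"aenm\"aki's machinery is available.
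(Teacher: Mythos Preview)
Your proof is correct and follows essentially the same approach as the paper: both use Lemma \ref{thm:ddim-permutation} to obtain $P_{\mathsf{A}}(\varphi^s) = P_{\mathfrak{h}_s(\mathsf{A})}(\|\cdot\|)$ and $\lambda_{\mathsf{A}}(\varphi^s,\mu) = \lambda_{\mathfrak{h}_s(\mathsf{A})}(\|\cdot\|,\mu)$, conclude that the two sets of equilibrium states coincide, and then invoke Theorem \ref{thm:FeKa} for the bound $(d-k)\binom{d}{k}$ and full support. Your version simply spells out a bit more explicitly the intermediate step $\varphi^s(A_\iii) = \|\mathfrak{h}_s(\mathsf{A})_\iii\|$ via the homomorphism property, which the paper leaves implicit.
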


\begin{proof}
  By Lemma \ref{thm:ddim-permutation}, we have 
  \[
    P_{\mathsf{A}}(\fii^s) = P_{\mathfrak{h}_s(\mathsf{A})}(\|\cdot\|) \quad \text{and} \quad \lambda_{\mathsf{A}}(\fii^s,\mu) = \lambda_{\mathfrak{h}_s(\mathsf{A})}(\|\cdot\|,\mu)
  \]
  for all $\mu \in \MM_\sigma(\Sigma)$. This implies that the two sets of equilibrium states are identical. The number of ergodic $\varphi^s$-equilibrium states of $\mathsf{A}$ is therefore equal to the number of ergodic $\|\cdot\|$-equilibrium states of $\mathfrak{h}_s(\mathsf{A})$. By Theorem \ref{thm:FeKa}, this number is bounded above by the dimension $(d-k)\binom{d}{k}$ of the matrices $\mathfrak{h}_s(A_i)$. Moreover, each of the ergodic equilibrium state is fully supported.
\end{proof}

To finish the proof of Theorem \ref{thm:intro-perm} it only requires to show that the upper bound $(d-k)\binom{d}{k}$ found in Proposition \ref{co:permutation-equilibrium} can be attained. This is done in the following proposition.

\begin{proposition}\label{pr:max-e-states}
  Let $k \in \{ 0,\ldots,d-1 \}$, $k<s<k+1$, and $\mathsf{A}=(A_1,\ldots,A_d)\in P_d(\mathbb{R})^d$ be such that $A_ie_i=2e_i$ and $A_ie_j=e_j$ for all $i,j \in \{ 1,\ldots,d \}$ with $i \ne j$. Then the number of distinct ergodic $\varphi^s$-equilibrium states of $\mathsf{A}$ is precisely $(d-k)\binom{d}{k}$.
\end{proposition}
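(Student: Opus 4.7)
The plan is to verify that the specific tuple $\mathsf{A}$ realises the upper bound $(d-k)\binom{d}{k}$ established in Proposition \ref{co:permutation-equilibrium}. Since that proposition provides a bijection between $\varphi^s$-equilibrium states of $\mathsf{A}$ and $\|\cdot\|$-equilibrium states of the auxiliary tuple $\mathfrak{h}_s(\mathsf{A})$, the task reduces to exhibiting $(d-k)\binom{d}{k}$ distinct ergodic $\|\cdot\|$-equilibrium states of $\mathfrak{h}_s(\mathsf{A})$.

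The first step is a direct computation of $\mathfrak{h}_s(A_i)$. Because each $A_i$ has trivial associated permutation $\pi_{A_i}=\mathrm{id}$, the matrix $\mathfrak{h}_s(A_i)$ is diagonal in the basis $\{e_{S,j}\}$, and therefore so is every product $\mathfrak{h}_s(A_\iii)$. Writing $d_{i,(S,j)}$ for the diagonal entry of $\mathfrak{h}_s(A_i)$ at $(S,j)$, the recipe in \S\ref{sec:permutation} gives
\[
  d_{i,(S,j)} = \begin{cases} 2^{s-k}, & i=j, \\ 2, & i \in S, \\ 1, & \text{otherwise.} \end{cases}
\]
Summing over $i$ yields $Z := \sum_i d_{i,(S,j)} = 2^{s-k}+2k+(d-k-1)$, which is independent of the pair $(S,j)$.

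The second step produces one candidate equilibrium state for each index $\ell=(S,j)$: let $\mu_{S,j}$ be the Bernoulli measure on $\Sigma$ with probabilities $p_i^{(S,j)}=d_{i,(S,j)}/Z$. Since $\mathfrak{h}_s(A_\iii)$ is diagonal with nonnegative entries, one has the elementary bounds
\[
  \max_\ell \prod_{m=1}^{n} d_{i_m,\ell} \le \|\mathfrak{h}_s(A_\iii)\| \le \sum_\ell \prod_{m=1}^n d_{i_m,\ell}.
\]
The upper bound integrated over $\Sigma_n$ gives $P_{\mathfrak{h}_s(\mathsf{A})}(\|\cdot\|)\le \log Z$. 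The lower bound, applied with $\ell=(S,j)$, combined with the standard Gibbs identity $h(\mu_{S,j}) + \sum_i p_i^{(S,j)}\log d_{i,(S,j)} = \log Z$, gives
\[
  h(\mu_{S,j}) + \lambda_{\mathfrak{h}_s(\mathsf{A})}(\|\cdot\|,\mu_{S,j}) \ge \log Z,
\]
so $\mu_{S,j}$ attains the pressure and is therefore an ergodic $\|\cdot\|$-equilibrium state of $\mathfrak{h}_s(\mathsf{A})$.

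The third step verifies that the measures $\mu_{S,j}$ are pairwise distinct. Since $k<s<k+1$ implies $1<2^{s-k}<2$, the three numbers $1$, $2^{s-k}$, $2$ are distinct, so from the probability vector $(p_i^{(S,j)})_{i=1}^d$ one can recover $j$ as the unique index of value $2^{s-k}/Z$ and $S$ as the set of indices of value $2/Z$. Thus the map $(S,j)\mapsto \mu_{S,j}$ is injective, yielding $(d-k)\binom{d}{k}$ distinct ergodic equilibrium states. Combined with the upper bound from Proposition \ref{co:permutation-equilibrium}, this gives the sharp count. The argument is essentially a bookkeeping exercise once one observes that the matrices $\mathfrak{h}_s(A_i)$ are simultaneously diagonal, so there is no real obstacle; the only point requiring attention is the distinctness step, for which the hypothesis $k<s<k+1$ is crucial.
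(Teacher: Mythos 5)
Your proof is correct and takes essentially the same route as the paper: reduce via Proposition~\ref{co:permutation-equilibrium} to the auxiliary diagonal tuple $\mathfrak{h}_s(\mathsf{A})$, compute the diagonal entries, observe the pressure $\log Z$ is independent of the coordinate, and show the Bernoulli measures attached to different pairs $(S,j)$ are pairwise distinct by reading off $j$ as the unique coordinate with weight $2^{s-k}/Z$ and $S$ as the set with weight $2/Z$ (the paper verifies the same distinctness by comparing cylinder masses case-by-case). The only stylistic difference is that, where the paper invokes the structure theorem of Feng--K\"aenm\"aki \cite[Theorem 1.7]{FengKaenmaki2011} a second time to characterise the equilibrium states of the diagonal tuple as the Bernoulli states for the maximal-pressure coordinate, you give a self-contained variational argument: bound $P_{\mathfrak{h}_s(\mathsf{A})}(\|\cdot\|) \le \log Z$ from the sum-of-diagonals estimate, and show each $\mu_{S,j}$ attains $\log Z$ from the entropy identity for Bernoulli measures plus $\|\mathfrak{h}_s(A_\iii)\|\ge\prod_m d_{i_m,(S,j)}$. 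Both routes close the argument by combining the exhibited family with the $(d-k)\binom{d}{k}$ upper bound; your version is more elementary and avoids the second citation, while the paper's is shorter given the machinery already at hand.
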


\begin{proof}
  Let $d'=(d-k)\binom{d}{k}$. By Proposition \ref{co:permutation-equilibrium}, it is sufficient to show that the $d$-tuple of $d'$-dimensional matrices  $\mathfrak{h}_s(\mathsf{A}) = (\mathfrak{h}_s(A_1),\ldots,\mathfrak{h}_s(A_d))$ has precisely $d'$ distinct ergodic $\|\cdot\|$-equilibrium states. Observe that the matrix $\mathfrak{h}_s(A_i)$ satisfies
  \begin{equation*}
    \mathfrak{h}_s(A_i)e_{S,j} =
    \begin{cases}
      e_{S,j}, &\text{if } i \notin S \cup \{j\}, \\
      2e_{S,j}, &\text{if } i \in S, \\
      2^{s-k}e_{S,j}, &\text{if } i=j,
    \end{cases}
  \end{equation*}
  for all $i,j \in \{ 1,\ldots,d \}$. In particular, each $\mathfrak{h}_s(A_i)$ is diagonal. Therefore, by \cite[Theorem 1.7]{FengKaenmaki2011}, an ergodic measure $\mu$ is a $\|\cdot\|$-equilibrium state of $\mathfrak{h}_s(\mathsf{A})$ if and only if there exists a basis element $e_{S,j}$ such that $\mu$ is a $\|\cdot\|$-equilibrium state of the $d'$-tuple of $1\times 1$ matrices $\mathfrak{h}_s(\mathsf{A})e_{S,j} = (|\mathfrak{h}_s(A_1)e_{S,j}|,\ldots,|\mathfrak{h}_s(A_d)e_{S,j}|)$ and such that the norm pressure of $\mathfrak{h}_s(\mathsf{A})e_{S,j}$ is maximal with respect to the choice of $(S,j)$.

  To prove the proposition it therefore suffices to show that if $(S_1,i_1) \neq (S_2,i_2)$ then the norm pressures of $\mathfrak{h}_s(\mathsf{A})e_{S_1,i_1}$ and $\mathfrak{h}_s(\mathsf{A})e_{S_2,i_2}$ are the same but their $\|\cdot\|$-equilibrium states are different. Indeed, for $j \in \{1,2\}$ the norm pressures are simply given by
  \begin{align*}
    P_{\mathfrak{h}_s(\mathsf{A})e_{S_j,i_j}}(\|\cdot\|) &= \log \biggl(\sum_{i=1}^d |\mathfrak{h}_s(A_i)e_{S_j,i_j}|\biggr) \\
    &=\log(2k+2^{s-k}+d-k-1)=\log(2^{s-k}+d+k-1)
  \end{align*}
  since exactly $k$ summands equal $2$, exactly one summand equals $2^{s-k}$, and the remaining $d-k-1$ summands equal $1$.  The two norm pressures are therefore equal as desired.
  Now let $\mu_1$ and $\mu_2$ denote the respective $\|\cdot\|$-equilibrium states corresponding to the distinct pairs $(S_1,i_1)$ and $(S_2,i_2)$. For $j\in \{1,2\}$ the measure $\mu_j$ is the unique Bernoulli measure on $\Sigma$ such that 
  \[
    \mu_j([i])=\frac{|\mathfrak{h}_s(A_i)e_{S_j,i_j}|}{2^{s-k}+d+k-1}
  \]
  for every $i\in\{1,\ldots,d\}$. If $i_1 \neq i_2$, then $|\mathfrak{h}_s(A_{i_1})e_{S_1,i_1}|=2^{s-k}$ but $|\mathfrak{h}_s(A_{i_1})e_{S_2,i_2}|$ is either $1$ or $2$ depending on whether $i_1 \in S_2$, so $\mu_1([i_1])\neq \mu_2([i_1])$. On the other hand, if $S_1 \neq S_2$ then for $i \in S_1 \triangle S_2$ one of the values $|\mathfrak{h}_s(A_{i})e_{S_1,i_1}|$ and  $|\mathfrak{h}_s(A_{i})e_{S_2,i_2}|$ equals $2$ and the other equals either $1$ or $2^{s-k}$, and therefore $\mu_1([i])\neq \mu_2([i])$. We conclude that the number of distinct ergodic $\|\cdot\|$-equilibrium states of $\mathfrak{h}_s(\mathsf{A})$ equals the number of distinct basis elements $(S,j)$ which is of course precisely $d'$.
\end{proof}

We note that the $\varphi^s$-equilibrium states of $\mathsf{A} \in GL_d(\mathbb{R})^N$ cannot in general be represented as $\|\cdot\|^t$-equilibrium states of  a collection of auxiliary matrices of dimension strictly less than $(d-k)\binom{d}{k}$. If this were the case then the maximum number of $\varphi^s$-equilibrium states of $\mathsf{A}$ would have to be strictly less than $(d-k)\binom{d}{k}$, contradicting Proposition \ref{pr:max-e-states}.


\section{Block upper-triangular matrices} \label{sec:block-upper}

In this section, we prove Theorem \ref{thm:intro-block}. To that end, we will first state and prove a technical lemma which allows us to estimate the singular value function of a block upper triangular matrix by the singular value function of the corresponding block diagonal matrix.

\begin{lemma} \label{thm:triangular-diagonal}
  If $0<s<d$ and $A, A' \in GL_d(\mathbb{R})$ are such that
  \begin{equation*}
    A =
    \begin{pmatrix}
      B & C \\
      0 & D
    \end{pmatrix}
    \quad \text{and} \quad
    A' =
    \begin{pmatrix}
      B & 0 \\
      0 & D
    \end{pmatrix}
  \end{equation*}
  for some square matrices $B$ and $D$, then $\fii^s(A) \ge \fii^s(A')$.
\end{lemma}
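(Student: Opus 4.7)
The plan is to apply the variational formula \eqref{eq:svd-formula} using an orthonormal frame adapted to the block structure. Write $k = \lfloor s \rfloor$ and let $B$ and $D$ be of sizes $p \times p$ and $q \times q$, so $p + q = d$. First I would observe that since $(A')^T A' = \mathrm{diag}(B^T B, D^T D)$ is block diagonal, one can choose an orthonormal basis $u_1, \ldots, u_d$ of right-singular vectors of $A'$ such that each $u_i$ is of pure block form, i.e., either $u_i = (x_i, 0)$ with $x_i \in \R^p$ or $u_i = (0, y_i)$ with $y_i \in \R^q$. Ordering these so that $\|A' u_1\| \geq \cdots \geq \|A' u_d\|$, we have $\|A' u_i\| = \alpha_i(A')$.

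The key computation is then pointwise comparison of $A$ and $A'$ on these vectors. If $u_i = (x_i, 0)$ then $A u_i = (B x_i, 0) = A' u_i$, so $\|A u_i\| = \|A' u_i\|$. If instead $u_i = (0, y_i)$ then $A u_i = (C y_i, D y_i)$ whereas $A' u_i = (0, D y_i)$, and the Pythagorean theorem yields $\|A u_i\|^2 = \|C y_i\|^2 + \|D y_i\|^2 \geq \|A' u_i\|^2$. Either way, $\|A u_i\| \geq \|A' u_i\|$ for every $i$. Applying \eqref{eq:svd-formula} to the pairwise orthogonal unit vectors $u_1, \ldots, u_{k+1}$ then gives
\begin{equation*}
\varphi^s(A) \geq \biggl(\prod_{i=1}^{k} \|A u_i\|\biggr)\,\|A u_{k+1}\|^{s-k} \geq \biggl(\prod_{i=1}^{k} \|A' u_i\|\biggr)\,\|A' u_{k+1}\|^{s-k} = \varphi^s(A'),
\end{equation*}
where the last equality uses that $u_1,\ldots,u_{k+1}$ realize the top $k+1$ singular values of $A'$ in order.

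The only delicate point I anticipate is the pure block form of the right-singular vectors of $A'$. This is immediate when $B^T B$ and $D^T D$ have disjoint spectra, but when they share eigenvalues one must explicitly choose, within each $(A')^T A'$-eigenspace, an orthonormal basis that is the concatenation of an eigenbasis for the corresponding eigenspace of $B^T B$ (embedded as $(\cdot,0)$) and one for $D^T D$ (embedded as $(0,\cdot)$); this remains orthonormal because the two embedded subspaces are orthogonal in $\R^d$. Everything else is a direct computation, and the same argument carries over verbatim to the more-than-two-block case needed in Theorem \ref{thm:intro-block}.
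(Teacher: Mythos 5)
Your proof is correct and uses essentially the same mechanism as the paper: the variational formula \eqref{eq:svd-formula} applied to a pairwise orthogonal frame on which $A'$ attains $\varphi^s(A')$, combined with the pointwise comparison $\|Au_i\|\ge\|A'u_i\|$. The only cosmetic difference is that the paper first conjugates by block-diagonal isometries (via the SVDs of $B$ and $D$) so that this frame becomes the standard basis, whereas you construct the block-adapted right-singular vectors of $A'$ directly and handle the repeated-eigenvalue case explicitly, which is a valid shortcut.
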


\begin{proof}
  By the singular value decomposition there exist isometries $U_1,V_1,U_2,V_2$ and diagonal matrices $G_1,G_2$ such that
  \begin{equation*}
    U_1BV_1 = G_1 \quad \text{and} \quad U_2DV_2 = G_2.
  \end{equation*}
  Since $\varphi^s(UAV)=\varphi^s(A)$ whenever $U$ and $V$ are isometries it follows that
  \[
    \fii^s(A) = \fii^s\biggl(
    \begin{pmatrix}
      U_1 & 0 \\
      0 & U_2
    \end{pmatrix}
    \begin{pmatrix}
      B & C \\
      0 & D
    \end{pmatrix}
    \begin{pmatrix}
      V_1 & 0 \\
      0 & V_2
    \end{pmatrix}\biggr)
    = \fii^s
    \begin{pmatrix}
      G_1 & U_1CV_2 \\
      0 & G_2
    \end{pmatrix},
  \]
  where we note that the final matrix is upper triangular, and
  \[
    \fii^s(A') = \fii^s\biggl(
    \begin{pmatrix}
      U_1 & 0 \\
      0 & U_2
    \end{pmatrix}
    \begin{pmatrix}
      B & 0 \\
      0 & D
    \end{pmatrix}
    \begin{pmatrix}
      V_1 & 0 \\
      0 & V_2
    \end{pmatrix}\biggr)
    = \fii^s
    \begin{pmatrix}
      G_1 & 0 \\
      0 & G_2
    \end{pmatrix},
  \]
  where we observe the final matrix to be diagonal. We therefore lose no generality by assuming $A$ to be upper triangular and $A'$ diagonal. If $k \in \{ 0,\ldots,d-1 \}$ is such that $k<s<k+1$, then, by \eqref{eq:svd-formula}, we have
  \begin{equation*}
    \varphi^s(B)=\max\biggl\{\biggl(\prod_{i=1}^{k} \|Bu_i\|\biggr)\|Bu_{k+1}\|^{s-k} : u_1,\ldots,u_{k+1} \in S^{d-1} \text{ are pairwise orthogonal} \biggr\}
  \end{equation*}
  for all $B \in GL_d(\R)$. For $A'$ it is clear that this maximum is attained by taking $u_1,\ldots,u_{k+1}$ to be an appropriate subset of the standard basis. In this case, we clearly have $\|Au_i\|\geq \|A' u_i\|$ for every $i \in \{ 1,\ldots,k+1 \}$, and therefore $\fii^s(A) \ge \fii^s(A')$ as claimed. 
\end{proof}

Theorem \ref{thm:intro-block} may be obtained by a repeated application of the following proposition. Its proof is based on the continuity of the singular value pressure and the previous lemma.

\begin{proposition}\label{thm:2-block}
  Let $l \in \{ 1,\ldots,d-1 \}$ and $\mathsf{A}=(A_1,\ldots,A_N) \in GL_d(\mathbb{R})^N$ be such that
  \[
    A_i =
    \begin{pmatrix}
      B_i & C_i \\
      0   & D_i
    \end{pmatrix}
  \]
  for every $i \in \{ 1,\ldots,N \}$, where $B_i \in GL_l(\mathbb{R})$, $D_i \in GL_{d-l}(\mathbb{R})$, and the matrices $C_i$ have dimension $l \times (d-l)$. If $\mathsf{A}' = (A_1',\ldots,A_N') \in GL_d(\mathbb{R})$ is such that
  \[
    A_i' =
    \begin{pmatrix}
      B_i & 0 \\
      0   & D_i
    \end{pmatrix}
  \]
  for all $i \in \{1,\ldots,N\}$, then the set of all $\varphi^s$-equilibrium states of $\mathsf{A}$ is precisely the set of all $\varphi^s$-equilibrium states of $\mathsf{A}'$ for all $0<s<d$.
\end{proposition}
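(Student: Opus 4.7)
The plan is to introduce a one-parameter family of tuples $\mathsf{A}_\epsilon$ which interpolates between $\mathsf{A}'$ and $\mathsf{A}$, built so that $\mathsf{A}_\epsilon$ is conjugate to $\mathsf{A}$ for every $\epsilon>0$ but degenerates to $\mathsf{A}'$ as $\epsilon\downarrow 0$. Concretely, for $\epsilon>0$ let $T_\epsilon$ be the block-diagonal matrix with upper block $I_l$ and lower block $\epsilon I_{d-l}$, and set $A_i(\epsilon)=T_\epsilon^{-1}A_iT_\epsilon=\bigl(\begin{smallmatrix}B_i & \epsilon C_i\\ 0 & D_i\end{smallmatrix}\bigr)$, together with $\mathsf{A}_\epsilon=(A_1(\epsilon),\ldots,A_N(\epsilon))$. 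Then $\mathsf{A}_1=\mathsf{A}$, $\mathsf{A}_\epsilon$ converges entry-wise to $\mathsf{A}'$ as $\epsilon\downarrow 0$, and for each $\iii\in\Sigma_*$ the product $A_\iii(\epsilon)=T_\epsilon^{-1}A_\iii T_\epsilon$ is block upper-triangular with the same block-diagonal part as $A_\iii'$.

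The submultiplicativity \eqref{eq:cylinder2} of $\fii^s$ gives the uniform bound
\[
  \bigl|\log\fii^s(A_\iii(\epsilon))-\log\fii^s(A_\iii)\bigr|\le\log\bigl(\fii^s(T_\epsilon)\fii^s(T_\epsilon^{-1})\bigr)
\]
for every $\epsilon>0$ and $\iii\in\Sigma_*$, with right-hand side independent of $\iii$. Dividing by $|\iii|$ and letting $|\iii|\to\infty$ therefore yields $P_{\mathsf{A}_\epsilon}(\fii^s)=P_{\mathsf{A}}(\fii^s)$ and $\lambda_{\mathsf{A}_\epsilon}(\fii^s,\mu)=\lambda_{\mathsf{A}}(\fii^s,\mu)$ for every $\epsilon>0$ and every $\mu\in\MM_\sigma(\Sigma)$. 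Combined with the continuity of the singular value pressure (Theorem \ref{le:cty}) this immediately gives $P_{\mathsf{A}}(\fii^s)=\lim_{\epsilon\downarrow 0}P_{\mathsf{A}_\epsilon}(\fii^s)=P_{\mathsf{A}'}(\fii^s)$.

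The harder step, and what I expect to be the main obstacle, is the matching identity $\lambda_{\mathsf{A}}(\fii^s,\mu)=\lambda_{\mathsf{A}'}(\fii^s,\mu)$ for each $\mu\in\MM_\sigma(\Sigma)$. The inequality $\lambda_{\mathsf{A}_\epsilon}(\fii^s,\mu)\ge\lambda_{\mathsf{A}'}(\fii^s,\mu)$ is immediate: by Lemma \ref{thm:triangular-diagonal} applied to $A_\iii(\epsilon)$ (whose block-diagonal part is $A_\iii'$), $\fii^s(A_\iii(\epsilon))\ge\fii^s(A_\iii')$ for every $\iii\in\Sigma_*$ and every $\epsilon\ge 0$, and the inequality passes through the limit defining $\lambda$. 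For the reverse I would exploit the infimum characterisation
\[
  \lambda_{\mathsf{A}_\epsilon}(\fii^s,\mu)=\inf_n\tfrac{1}{n}\sum_{\iii\in\Sigma_n}\mu([\iii])\log\fii^s(A_\iii(\epsilon)).
\]
Since $\Sigma_n$ is finite and $\epsilon\mapsto\log\fii^s(A_\iii(\epsilon))$ is continuous on $[0,1]$ for each fixed $\iii\in\Sigma_n$, bounded convergence gives convergence of each $n$-term sum to the corresponding $\mathsf{A}'$-sum as $\epsilon\downarrow 0$. Using the trivial one-sided bound $\lambda_{\mathsf{A}_\epsilon}(\fii^s,\mu)\le\tfrac{1}{n}\sum_{\iii\in\Sigma_n}\mu([\iii])\log\fii^s(A_\iii(\epsilon))$, taking $\limsup$ as $\epsilon\downarrow 0$ and then $\inf$ over $n$ gives $\limsup_{\epsilon\downarrow 0}\lambda_{\mathsf{A}_\epsilon}(\fii^s,\mu)\le\lambda_{\mathsf{A}'}(\fii^s,\mu)$; combined with the conjugation identity $\lambda_{\mathsf{A}_\epsilon}=\lambda_{\mathsf{A}}$ for $\epsilon>0$, this forces $\lambda_{\mathsf{A}}(\fii^s,\mu)=\lambda_{\mathsf{A}'}(\fii^s,\mu)$.

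Having matched both $P_{\mathsf{A}}=P_{\mathsf{A}'}$ and $\lambda_{\mathsf{A}}(\fii^s,\mu)=\lambda_{\mathsf{A}'}(\fii^s,\mu)$, the variational equation $P_{\mathsf{A}}(\fii^s)=h(\mu)+\lambda_{\mathsf{A}}(\fii^s,\mu)$ characterising $\fii^s$-equilibrium states of $\mathsf{A}$ coincides literally with the analogous equation for $\mathsf{A}'$, which finishes the proof. The delicate point throughout is that Lemma \ref{thm:triangular-diagonal} alone yields only one-sided comparisons for both $P$ and $\lambda$; the two-sided identities emerge only after combining it with the conjugation rigidity of $P$ and $\lambda$ for $\epsilon>0$ and with the continuity input from Theorem \ref{le:cty}.
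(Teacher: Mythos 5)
Your proof is correct and uses exactly the same ingredients as the paper's: the conjugation $A_i(\epsilon)=T_\epsilon^{-1}A_iT_\epsilon$, continuity of the pressure (Theorem \ref{le:cty}), upper semi-continuity of $\mathsf{B}\mapsto\lambda_{\mathsf{B}}(\fii^s,\mu)$ via the infimum characterisation, and the pointwise comparison $\fii^s(A)\ge\fii^s(A')$ from Lemma \ref{thm:triangular-diagonal}. The one cosmetic difference is that you package these into the symmetric pointwise identity $\lambda_{\mathsf{A}}(\fii^s,\mu)=\lambda_{\mathsf{A}'}(\fii^s,\mu)$ for every $\mu\in\MM_\sigma(\Sigma)$, which the paper leaves implicit and instead argues the two inclusions between the sets of equilibrium states asymmetrically; your version is slightly cleaner and makes the underlying rigidity explicit, but it is not a different route.
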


\begin{proof}
  Let $\mu$ be a $\fii^s$-equilibrium state of $\mathsf{A}$. For each $\varepsilon>0$ let us define $\mathsf{A}^\varepsilon=(A_1^\varepsilon,\ldots,A_N^\varepsilon)$ by setting
  \[
    A_i^\varepsilon =
    \begin{pmatrix}
      B_i & \varepsilon C_i \\
      0   & D_i
    \end{pmatrix}
    =
    \begin{pmatrix}
      \varepsilon^{1/2} I & 0 \\
      0 & \varepsilon^{-1/2} I
    \end{pmatrix}
    \begin{pmatrix}
      B_i & C_i \\
      0   & D_i
    \end{pmatrix}
    \begin{pmatrix}
      \varepsilon^{-1/2} I & 0 \\
      0 & \varepsilon^{1/2} I
    \end{pmatrix}
  \]
  for all $i \in \{ 1,\ldots,N \}$, where $I$ denotes the $l \times l $ or $(d-l) \times (d-l)$ identity matrix as appropriate. Since $\mathsf{A}^\varepsilon$ is conjugate to $\mathsf{A}$ it has the same singular value pressure and the same $\varphi^s$-equilibrium states. Thus $\mu$ is a $\varphi^s$-equilibrium state of $\mathsf{A}^\varepsilon$ for all $\eps > 0$. Observe that the function
  \[
    \mathsf{B} \mapsto \lambda_{\mathsf{B}}(\fii^s,\mu) =  \inf_{n \in \N} \tfrac{1}{n} \sum_{\iii \in \Sigma_n} \mu([\iii])\log\fii^s(B_\iii)
  \]
  defined on $GL_d(\R)^N$ is an infimum of continuous functions and hence upper semi-continuous. It follows that
  \begin{equation*}
    \lambda_{\mathsf{A}'}(\fii^s,\mu) \ge \limsup_{\eps \downarrow 0} \lambda_{\mathsf{A}^\eps}(\fii^s,\mu)
  \end{equation*}
  and hence
  \begin{equation*}
    h(\mu) + \lambda_{\mathsf{A}'}(\fii^s,\mu) \ge \limsup_{\eps \downarrow 0} P_{\mathsf{A}^\eps}(\fii^s) = P_{\mathsf{A}'}(\fii^s)
  \end{equation*}
  by Theorem \ref{le:cty}. Therefore $\mu$ is a $\fii^s$-equilibrium state of $\mathsf{A}'$.

  To show the other direction, let $\mu$ be a $\fii^s$-equilibrium state of $\mathsf{A}'$. Recall that $P_{\mathsf{A}}(\fii^s) = P_{\mathsf{A}^\eps}(\fii^s)$ by conjugacy and $\lim_{\eps \downarrow 0} P_{\mathsf{A}^\eps}(\fii^s) = P_{\mathsf{A}'}(\fii^s)$ by Theorem \ref{le:cty}. Therefore, by Lemma \ref{thm:triangular-diagonal}, we have
  \begin{equation*}
    P_{\mathsf{A}}(\fii^s) = P_{\mathsf{A}'}(\fii^s) = h(\mu) + \lambda_{\mathsf{A}'}(\fii^s,\mu) \le h(\mu) + \lambda_{\mathsf{A}}(\fii^s,\mu)\leq P_{\mathsf{A}}(\fii^s)
  \end{equation*}
  and $\mu$ is a $\fii^s$-equilibrium state of $\mathsf{A}$.
\end{proof}


\section{The three-dimensional case} \label{sec:3d}

In this section, we give the proof of Theorem \ref{thm:3d}, which proceeds through a series of cases. If $0 < s \le 1$ or $2 \le s < 3$, then the claim follows immediately from Theorem \ref{thm:FeKa2} and Proposition \ref{pr:max-e-states}, so we assume $1<s<2$ throughout the section. Furthermore, if $\mathsf{A} = (A_1,\ldots,A_N) \in GL_d(\R)^N$ is $\varphi^s$-quasimultiplicative, then the result follows from Theorem \ref{thm:unique-eq}, so we assume throughout the section that this is not the case. Observe that if $\mathsf{A}$ is strongly irreducible then $\mathsf{A}^{\wedge 2}$ must be irreducible by Lemma \ref{thm:irr_equiv2} and therefore Theorem \ref{th:alg} and Lemma \ref{thm:irreducibility} show that $\mathsf{A}$ must be $\varphi^s$-quasimultiplicative. Thus, our standing assumptions in this section imply that $\mathsf{A}$ cannot be strongly irreducible. In this section, we let $\rho(A)$ denote the spectral radius of the matrix $A$.

\subsection{The irreducible case} \label{sec:irred-case}
We first consider the case in which $\mathsf{A}$ is irreducible but not $\varphi^s$-quasimul\-tiplicative. We begin our analysis with a pair of lemmas.

\begin{lemma}\label{thm:boundedness}
  If $\mathsf{A} = (A_1,\ldots,A_N) \in GL_3(\R)^N$ is irreducible and $\rho(A_\iii)=1$ for every $\iii \in \Sigma_*$, then there exists $M>1$ such that  $\|A_\iii\|\leq M$ for every $\iii \in \Sigma_*$.
\end{lemma}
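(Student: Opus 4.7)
The plan is to argue by contradiction. Assuming $\sup_{\iii \in \Sigma_*}\|A_\iii\|=\infty$, I would pick $(\iii_n)_{n=1}^\infty\subset \Sigma_*$ with $\|A_{\iii_n}\|\to \infty$ and pass to a subsequence so that the normalised matrices $B_n=A_{\iii_n}/\|A_{\iii_n}\|$ converge to some $B \in M_3(\R)$ with $\|B\|=1$, and in particular $B \ne 0$. The goal is then to derive $B=0$, giving the required contradiction.

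The key step is to show that $A_\jjj B$ is nilpotent for every $\jjj\in \Sigma_*$. Since $A_{\jjj \iii_n} = A_\jjj A_{\iii_n}$, the hypothesis gives $\rho(A_{\jjj\iii_n})=1$, so
\[
  \rho(A_\jjj B_n)=\frac{\rho(A_{\jjj\iii_n})}{\|A_{\iii_n}\|}=\frac{1}{\|A_{\iii_n}\|}\longrightarrow 0.
\]
Because the spectral radius is a continuous function of a matrix (its eigenvalues depend continuously on the entries), this yields $\rho(A_\jjj B)=0$; that is, $A_\jjj B$ is nilpotent and in particular has vanishing trace. Taking $\jjj=\varnothing$ with the convention $A_\varnothing=I$ also gives $\operatorname{tr}(B)=0$.

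Next, I would leverage irreducibility through a real version of Burnside's theorem. Let $\mathcal{A}\subset M_3(\R)$ denote the unital $\R$-subalgebra generated by $A_1,\ldots,A_N$. Since $\mathsf{A}$ is irreducible, $\mathcal{A}$ acts irreducibly on $\R^3$, and by Schur's lemma its commutant is a real division algebra; by the theorem of Frobenius this commutant is isomorphic to $\R$, $\C$, or $\mathbb{H}$. The $\C$- and $\mathbb{H}$-cases would force the ambient real dimension to be a multiple of $2$ or $4$ respectively, so in our odd dimension $3$ the commutant must be $\R$. The Jacobson density theorem then yields $\mathcal{A}=M_3(\R)$. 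Since $\mathcal{A}$ coincides with the $\R$-linear span of $\{A_\iii : \iii\in \Sigma_*\}$, the identities $\operatorname{tr}(A_\iii B)=0$ propagate to $\operatorname{tr}(XB)=0$ for every $X\in M_3(\R)$, and the non-degeneracy of the trace pairing on $M_3(\R)$ then forces $B=0$, contradicting $\|B\|=1$.

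The principal obstacle is this real-Burnside step. In higher odd dimensions the same reasoning would still go through, but in general real irreducibility is strictly weaker than absolute (complex) irreducibility and the $\R$-linear span of an irreducible semigroup need not be the full matrix algebra; it is the oddness of $d=3$ that rules out the $\C$- and $\mathbb{H}$-centralizer cases and allows the argument to close.
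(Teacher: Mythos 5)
Your proof is correct, but it takes a genuinely different route from the paper. The paper disposes of the lemma in two citations: since $\rho(A_\iii)=1$ for all products, the Berger--Wang theorem gives that the joint spectral radius of $\{A_1,\ldots,A_N\}$ equals $1$, and boundedness of the semigroup then follows from the known fact (quoted from Jungers' book) that an irreducible family with joint spectral radius $1$ is product-bounded. You instead give a self-contained compactness argument: normalise an unbounded sequence of products to obtain a limit $B$ with $\|B\|=1$, use homogeneity and continuity of $\rho$ to get $\operatorname{tr}(A_\jjj B)=0$ for all $\jjj\in\Sigma_*$ (including $\jjj=\varnothing$), and then invoke irreducibility through Schur's lemma, Frobenius' theorem and Jacobson density to conclude that the unital algebra spanned by the $A_\iii$ is all of $M_3(\R)$ -- here the oddness of $d=3$ correctly rules out the $\C$- and $\mathbb{H}$-commutant cases -- so that nondegeneracy of the trace pairing forces $B=0$, a contradiction. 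All steps check out, and your argument even uses only boundedness of the spectral radii of products rather than their exact value. What each approach buys: the paper's proof is shorter and works verbatim in every dimension, since the cited boundedness result needs only irreducibility; yours is elementary and self-contained but, as written, is tied to odd dimension through the real-Burnside step. (One could in fact salvage your argument in general dimension by noting that $B$ lies in the algebra $\mathcal{A}$ itself, which is simple with nondegenerate trace form even when $\mathcal{A}\neq M_d(\R)$, but for the lemma as stated with $d=3$ this is not needed.)
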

\begin{proof}
  By a well-known theorem of Berger and Wang (see e.g.\ \cite{BeWa92, El95, Bo03}), if $\rho(A_\iii)=1$ for every $\iii \in \Sigma_*$ then it follows immediately that the joint spectral radius
  \[\lim_{n \to \infty} \max_{|\iii|=n} \left\|A_\iii\right\|^{1/n}\]
  is equal to $1$. This implies the boundedness of the set $\{A_\iii \colon \iii \in \Sigma_*\}$ by e.g.\ \cite[Theorem 2.1]{Ju09}.
\end{proof}

\begin{lemma} \label{thm:equal-eigenvalues}
  If $\mathsf{A} = (A_1,\ldots,A_N) \in GL_3(\R)^N$ is irreducible, and for every $\iii \in \Sigma_*$ the three eigenvalues of $A_\iii$ all have the same modulus, then $\mathsf{A}$ is $\fii^s$-quasimultiplicative for all $0 < s < 3$.
\end{lemma}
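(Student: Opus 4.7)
The plan is to reduce to the setting of Lemma \ref{thm:boundedness} by rescaling each matrix so that its spectral radius becomes $1$, and then exploit the fact that on the rescaled tuple the singular value function $\fii^s$ is comparable to a constant. Concretely, since every $A_\iii$ has three eigenvalues of equal modulus, its spectral radius satisfies $\rho(A_\iii) = |\det A_\iii|^{1/3}$. I would therefore set $B_i = A_i / |\det A_i|^{1/3}$ for each $i \in \{1,\ldots,N\}$, noting that the rescaling preserves every invariant subspace so $\mathsf{B} = (B_1,\ldots,B_N)$ remains irreducible and that $\rho(B_\iii) = 1$ for every $\iii \in \Sigma_*$.

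Applying Lemma \ref{thm:boundedness} to $\mathsf{B}$ then yields a constant $M > 1$ such that $\|B_\iii\| \le M$ for every $\iii \in \Sigma_*$. Combined with $|\det B_\iii| = 1$, the singular values $\alpha_1(B_\iii) \ge \alpha_2(B_\iii) \ge \alpha_3(B_\iii) > 0$ must all lie in the interval $[M^{-2}, M]$, since $\alpha_1 \le M$ and $\alpha_1\alpha_2\alpha_3 = 1$ force $\alpha_3 \ge M^{-2}$. Consequently there is a constant $C = C(M,s) \ge 1$ with
\[
  C^{-1} \le \fii^s(B_\iii) \le C
\]
for all $\iii \in \Sigma_*$. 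Using the scaling identity $\fii^s(\lambda A) = \lambda^s \fii^s(A)$ for $\lambda > 0$, this translates back to
\[
  C^{-1} |\det A_\iii|^{s/3} \le \fii^s(A_\iii) \le C |\det A_\iii|^{s/3}
\]
for every $\iii \in \Sigma_*$.

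From here, $\fii^s$-quasimultiplicativity is essentially free, with $K = 0$ and $\kkk = \varnothing$: by multiplicativity of the determinant,
\[
  \fii^s(A_{\iii\jjj}) \ge C^{-1} |\det A_{\iii\jjj}|^{s/3} = C^{-1}|\det A_\iii|^{s/3}|\det A_\jjj|^{s/3} \ge C^{-3}\fii^s(A_\iii)\fii^s(A_\jjj),
\]
which is precisely the quasimultiplicativity estimate \eqref{eq:irr} with constant $c = C^3$. The only nontrivial input is Lemma \ref{thm:boundedness}, whose proof rests on the Berger--Wang joint-spectral-radius theorem together with the boundedness criterion for irreducible bounded-joint-spectral-radius families. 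I do not expect any genuine obstacle in the argument itself; the one point deserving a brief sentence is checking that irreducibility is inherited by $\mathsf{B}$, which is immediate because each $B_i$ is a nonzero scalar multiple of $A_i$ and thus preserves exactly the same linear subspaces of $\R^3$.
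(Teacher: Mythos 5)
Your proof is correct and follows essentially the same route as the paper: normalise each matrix so that every product has determinant of modulus one (equivalently spectral radius one), apply Lemma \ref{thm:boundedness} to bound the singular values of all products uniformly, deduce that $\fii^s$ is comparable to a constant (equivalently to $|\det|^{s/3}$), and conclude quasimultiplicativity with an empty connecting word. The only cosmetic difference is that you extract the lower bound on $\alpha_3$ from the determinant constraint, whereas the paper instead applies Lemma \ref{thm:boundedness} a second time to the tuple of inverses; both variants are fine.
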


\begin{proof}
  Fix $0 < s < 3$. By replacing each $A_i$ with $|\det(A_i)|^{-1/d}A_i$ if necessary, we may assume without loss of generality that every  $A_\iii$ has determinant $\pm 1$ and therefore every $A_\iii$ has all eigenvalues of modulus 1. In particular $\rho(A_\iii)=\rho(A_\iii^{-1})=1$ for every $\iii \in \Sigma_*$. By the previous lemma it follows that $\{A_\iii \colon \iii \in \Sigma_*\}$ is bounded, and applying this reasoning to $(A_1^{-1},\ldots,A_N^{-1})$ it follows that $\{A_\iii^{-1} \colon \iii \in \Sigma_*\}$ is bounded also. Thus there exists a constant $M>1$ such that for every $\iii \in \Sigma_*$
  \[
    M^{-1}\leq \|A_\iii^{-1}\|^{-1} =\alpha_d(A_\iii) \leq \alpha_{d-1}(A_\iii) \leq \cdots \leq \alpha_1(A_\iii) = \|A_\iii\|\leq M
  \]
  and consequently, $M^{-s} \leq \fii^s(A_\iii)\leq M^s$ for all $\iii \in \Sigma_*$.  It follows that for every $\iii,\jjj \in \Sigma_*$
  \[
    \varphi^s(A_\iii)\varphi^s(A_\jjj) \leq M^{2s} \leq M^{3s}\varphi^s(A_{\iii\jjj})
  \]
  and therefore $\mathsf{A}$ is $\fii^s$-quasimultiplicative as claimed.
  \end{proof}

If $u \in \mathbb{R}^3$ is a nonzero vector, let us write $\overline{u}$ for the one-dimensional subspace generated by $u$. We may now demonstrate that in three dimensions the irreducible but not $\varphi^s$-quasimultiplicative case may be reduced to the case of generalised permutation matrices studied in \S \ref{sec:permutation}.

\begin{proposition} \label{thm:3dim-permutation}
  Let $1<s<2$ and $\mathsf{A}=(A_1,\ldots,A_N)\in GL_3(\mathbb{R})^N$ be irreducible such that $\mathsf{A}$ is not $\fii^s$-quasi\-multiplicative. Then there exists a basis of $\mathbb{R}^3$ with respect to which $\mathsf{A} \in P_3(\R)^N$.
\end{proposition}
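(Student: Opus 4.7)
Since $\mathsf{A}$ is irreducible it is $2$-irreducible by Lemma~\ref{thm:irr_equiv2}, and since $\mathsf{A}$ is not $\varphi^s$-quasimultiplicative the contrapositive of Lemma~\ref{thm:irreducibility} (with $t=s$) produces nonzero vectors $v_1,w_1\in\R^3$ and $v_2,w_2\in\wedge^2\R^3$ for which $\langle v_1,A_\iii w_1\rangle_1\langle v_2,A_\iii^{\wedge 2}w_2\rangle_2=0$ for every $\iii\in\Sigma_*$. Let $G$ denote the Zariski closure of the semigroup $\mathcal{S}=\{A_\iii:\iii\in\Sigma_*\}$; by Lemma~\ref{le:lie}, $G$ is a Lie group with finitely many components, and the argument in the proof of Proposition~\ref{th:alg} (via Lemma~\ref{le:reps}) shows that its identity component $G^\circ$ preserves some proper nonzero subspace of $\R^3$. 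Because preserving a subspace is a Zariski-closed condition, $G$ itself acts irreducibly on $\R^3$, so if the preserved subspace happens to be a plane $P$ then the $G$-orbit of $P$ consists of at least two distinct planes whose pairwise intersections are $G^\circ$-preserved lines. In either case $G^\circ$ preserves some line of $\R^3$.

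Let $\{\ell_1,\ldots,\ell_r\}$ be the $G$-orbit of such a $G^\circ$-preserved line. Its linear span is $G$-invariant and hence equals $\R^3$ by irreducibility, so $r\ge 3$; choose $\ell_1,\ell_2,\ell_3$ linearly independent and fix a basis $(v_1,v_2,v_3)$ with $v_i\in\ell_i$. Since $G^\circ$ preserves every $\ell_i$, the Lie algebra $\mathfrak{g}=\mathrm{Lie}(G^\circ)$ consists of matrices diagonal in this basis, and the conclusion follows from a trichotomy on $\mathfrak{g}$.

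If $\mathfrak{g}$ contains a diagonal matrix with three distinct entries, then $\overline{v_1},\overline{v_2},\overline{v_3}$ are the only common eigenlines of $\mathfrak{g}$, forcing $r=3$; consequently $G$ permutes $\ell_1,\ell_2,\ell_3$ with individual scalings, and therefore every element of $G$---and in particular each $A_i$---is a generalised permutation matrix in the basis $(v_1,v_2,v_3)$, giving $\mathsf{A}\in P_3(\R)^N$. If $\mathfrak{g}$ is nonscalar yet no element has three distinct entries, a short linear-algebra observation---the difference of two diagonal matrices with differing ``equal pairs'' of entries has three distinct entries---forces, after relabelling the basis, that $\mathfrak{g}\subset\{\operatorname{diag}(a,a,c):a,c\in\R\}$; the canonical $G^\circ$-eigenspaces $\linspan(v_1,v_2)$ and $\overline{v_3}$ then have distinct dimensions and so must be stabilised by the normaliser $N_{GL_3(\R)}(G^\circ)\supset G$, contradicting the irreducibility of $\mathsf{A}$. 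Finally, if $\mathfrak{g}$ is scalar then $G^\circ\subset \R^{\ast}I$, and since $G/G^\circ$ is finite, a uniform power of every $A_\iii$ lies in $\R^{\ast}I$; the three eigenvalues of each $A_\iii$ therefore share a common modulus, and Lemma~\ref{thm:equal-eigenvalues} forces $\mathsf{A}$ to be $\varphi^s$-quasimultiplicative, contradicting our standing assumption. The most delicate step is expected to be the nonscalar-degenerate case, where the normaliser computation must be executed carefully to ensure that the two distinguished eigenspaces of $G^\circ$ are $G$-stable (exploiting that their dimensions $2$ and $1$ prevent them from being exchanged).
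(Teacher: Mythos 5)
Your argument reaches the correct conclusion by a route that is genuinely different from the paper's. In \S\ref{sec:3d} the paper uses the failure of $\fii^s$-quasimultiplicativity only once, at the top of the section, to deduce (via Theorem~\ref{thm:intro-alg}) that $\mathsf{A}$ is not strongly irreducible; inside the proof of Proposition~\ref{thm:3dim-permutation} it then works with the group $G$ \emph{generated} by $A_1,\ldots,A_N$, produces an element whose eigenvalues have unequal moduli (via Lemma~\ref{thm:equal-eigenvalues}), and uses the leading eigenspace of that element together with the finite orbit of a line to force the orbit to have exactly three members. No Lie theory appears. You instead re-enter the machinery of \S\ref{sec:alg}: taking $G$ to be the Zariski closure of the semigroup, you extract a $G^\circ$-invariant line (reducing from an invariant plane by intersecting two planes in its finite $G$-orbit), observe that $\mathfrak{g}=\mathrm{Lie}(G^\circ)$ is diagonal in a basis adapted to three independent lines from the orbit, and run a trichotomy on $\mathfrak{g}$. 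This is a legitimate and arguably more structural alternative; the paper's approach is shorter and more elementary once the strong-irreducibility reduction has been made, while yours makes the role of $G^\circ$ and its isotypic decomposition explicit.

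One auxiliary claim in your middle case is false as stated. You assert that the difference of two diagonal matrices with differing ``equal pairs'' of entries has three distinct entries, but $\operatorname{diag}(2,2,1)-\operatorname{diag}(2,1,1)=\operatorname{diag}(0,1,0)$ is a counterexample. The conclusion you want (that $\mathfrak{g}$ is contained, after relabelling, in $\{\operatorname{diag}(a,a,c)\}$) is nonetheless correct: $\mathfrak{g}$ is a linear subspace of diagonal matrices avoiding all matrices with three distinct entries, so it lies in the union of the three hyperplanes $\{a_{11}=a_{22}\}$, $\{a_{11}=a_{33}\}$, $\{a_{22}=a_{33}\}$, and a real vector space cannot equal a finite union of proper subspaces, so $\mathfrak{g}$ lies in one of them; equivalently, for $D_1$ with repeated pair in positions $\{1,2\}$ and $D_2$ with repeated pair in positions $\{2,3\}$, the combination $D_1+tD_2$ has three distinct entries for all but at most one $t$. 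With this repair the rest of your case analysis (the normaliser argument forcing $G$ to stabilise isotypic components of distinct dimensions $2$ and $1$, and the scalar case settled via Lemma~\ref{thm:equal-eigenvalues}) is sound.
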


\begin{proof}
  Let $G$ denote the group generated by $A_1,\ldots,A_N$. Clearly it suffices to find a basis in which every element of $G$ is a generalised permutation matrix. If for every $A \in G$ all of the eigenvalues of $A$ are equal in modulus then, by Lemma \ref{thm:equal-eigenvalues}, $\mathsf{A}$ is $\fii^s$-quasimultiplicative for all $0 < s < 3$. Since this contradicts the assumption we conclude that there exists $A \in G$ whose eigenvalues are not all equal in modulus.

  It is sufficient to prove that there exist linearly independent vectors $v_1,v_2,v_3 \in \mathbb{R}^3$ such that $\{\overline{v}_1,\overline{v}_2,\overline{v}_3\}$ is preserved by $G$. Taking these vectors to be a new basis yields the result. Since $\mathsf{A}$ is not strongly irreducible (as pointed out in the beginning of this section) there exists a proper nontrivial subspace $V$ of $\mathbb{R}^3$ such that the orbit of $V$ under the semigroup generated by $\mathsf{A}$, and therefore under $G$, is finite. We will prove the proposition in the case where $V$ is a one-dimensional space. If instead $V$ is two-dimensional then the one-dimensional space $V^\perp$ has a finite orbit under the irreducible group $G^T=\{B^T\colon B \in G\}$ and so the conclusion of the proposition applies to $G^T$. Obviously if $G^T$ is simultaneously similar to a group of generalised permutation matrices then so is $G$, and thus no generality is lost by the assumption $\dim V=1$.

  Either $A$ or $A^{-1}$ has the property that its leading eigenspace is one-dimensional, and without loss of generality we assume this to be $A$. Let $\overline{u}$ be the leading eigenspace of $A$ and $P$ the $A$-invariant plane generated by its other two (generalised) eigenvectors. Let $\{\overline{v}_1,\ldots,\overline{v}_n\}$ denote the orbit of $V$, which is a finite set of distinct one-dimensional subspaces of $\mathbb{R}^3$ which is preserved by $G$. We will show that necessarily $n=3$.

  Let us first show that $n \geq 3$. We also show that precisely one of the $\overline{v}_i$'s is transverse to $P$. Indeed, if $n \leq 2$ then the linear span of the subspaces $\overline{v}_i$ would be a nontrivial proper subspace of $\mathbb{R}^3$ which is invariant under $G$, contradicting irreducibility. Similarly, if every $\overline{v}_i$ belongs to $P$, then the linear span of the $\overline{v}_i$'s is a proper nontrivial $G$-invariant subspace of $\mathbb{R}^3$, so at least one $\overline{v}_i$ is transverse to $P$. Now if $\overline{v}_i$ is transverse to $P$ then $\lim_{n\to\infty} \overline{A^nv}_i=\overline{u}$ since $\overline{u}$ is the leading eigenspace of $A$. It follows that if $\overline{v}_i$ is transverse to $P$ then either $\overline{v}_i=\overline{u}$ or the orbit of $\overline{v}_i$ under $G$ is infinite, but the latter is a contradiction. We conclude that exactly one $\overline{v}_i$ is equal to $\overline{u}$ and the remainder are subspaces of $P$. Without loss of generality we take $\overline{v}_1=\overline{u}$.

  Let us then show that $n \leq 3$. By irreducibility, the span of the subspaces $\overline{v}_2,\ldots,\overline{v}_n$ is not invariant under $G$, so there exist $B \in G$ and $\overline{v}_\ell$ with $\ell>1$ such that $\overline{Bv}_\ell=\overline{v}_1$. In particular $\overline{v}_\ell$ is a subspace of $P$. Assuming contrarily that $n \geq 4$, we may choose $\overline{v}_j$ and $\overline{v}_k$ which are subspaces of $P$ and are not equal to $\overline{v}_\ell$. In particular $Bv_j, Bv_k \in P$ since only $v_\ell$ is mapped outside $P$ by $B$. Since $P$ is two-dimensional we may write $v_\ell=\alpha v_j+\beta v_k$ for some $\alpha,\beta \neq 0$. We have $\overline{Bv}_\ell=\overline{v}_1=\overline{u}$ which is transverse to $P$, but since $\overline{Bv}_j,\overline{Bv}_k$ are subspaces of $P$ we have $Bv_\ell=\alpha Bv_j+\beta Bv_k \in P$ which is a contradiction. We conclude that $n=3$ and the set $\{\overline{v}_1,\overline{v}_2,\overline{v}_3\}$ is preserved by $G$. Since $\overline{v}_1$ is transverse to $P$, and $\overline{v}_2$, $\overline{v}_3$ belong to $P$ and are distinct from one another, $\{\overline{v}_1,\overline{v}_2,\overline{v}_3\}$ is linearly independent. The result follows.
\end{proof}

The case of Theorem \ref{thm:3d} in which $\mathsf{A}$ is irreducible but not $\varphi^s$-quasimultiplicative now follows by combining Proposition \ref{thm:3dim-permutation} with Theorem \ref{thm:intro-perm}.

\subsection{The reducible case}

If $\mathsf{A}$ is simultaneously upper triangularisable, then, by Theorem \ref{thm:intro-block}, the $\varphi^s$-equilibrium states of $\mathsf{A}$ are the same as those of the corresponding diagonal matrices. By Theorem \ref{thm:intro-perm}, these equilibrium states are fully supported and  at most six ergodic equilibrium states exist. The remaining reducible cases may be reduced as follows.

\begin{proposition}\label{pr:3d-reducible}
  Let $0<s<3$ and $\mathsf{A}=(A_1,\ldots,A_N) \in GL_3(\mathbb{R})^N$ be reducible but not simultaneously upper triangularisable. Then there exist $(b_1,\ldots,b_N)\in (\mathbb{R} \setminus \{0\})^N$ and irreducible $\mathsf{C}=(C_1,\ldots,C_N)\in GL_2(\mathbb{R})^N$ such that the $\varphi^s$-equilibrium states of $\mathsf{A}$ are precisely the $\varphi^s$-equilibrium states of the tuple $\mathsf{A}'=(A_1',\ldots,A_N')$ in which
  \begin{equation*}
    A_i' =
    \begin{pmatrix}
      b_i & 0 \\
      0   & C_i
    \end{pmatrix}
  \end{equation*}
  for all $i \in \{ 1,\ldots,N \}$.
\end{proposition}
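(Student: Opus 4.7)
The plan is to exploit reducibility to choose a basis of $\R^3$ in which $\mathsf{A}$ becomes block upper-triangular with a $1\times 1$ block and a $2\times 2$ block, to argue that the $2\times 2$ block tuple must itself be irreducible (otherwise a full invariant flag exists and $\mathsf{A}$ would be upper triangularisable), to apply Theorem \ref{thm:intro-block} to pass to the block diagonal form, and finally to reorder the blocks by an orthogonal permutation if needed so that the scalar lies in the top-left corner.

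First I would record that a simultaneous linear conjugation $A_i \mapsto P A_i P^{-1}$ by a fixed $P \in GL_3(\R)$ leaves the set of $\varphi^s$-equilibrium states unchanged. Indeed the submultiplicativity of $\varphi^s$ yields $\varphi^s(P A_\iii P^{-1}) \le \varphi^s(P)\,\varphi^s(A_\iii)\,\varphi^s(P^{-1})$, together with the analogous reverse bound obtained by conjugating back by $P^{-1}$, uniformly in $\iii \in \Sigma_*$. Consequently both $P_{\mathsf{A}}(\varphi^s)$ and $\lambda_{\mathsf{A}}(\varphi^s,\mu)$ are unaffected by the conjugation, so I may freely change basis of $\R^3$.

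Since $\mathsf{A}$ is reducible, there is a proper nontrivial common invariant subspace $V \subset \R^3$ with $\dim V \in \{1,2\}$. Choosing a basis whose first $\dim V$ vectors span $V$ puts every $A_i$ into one of the block upper-triangular forms
\[
\begin{pmatrix} b_i & w_i^T \\ 0 & D_i \end{pmatrix}
\quad \text{or} \quad
\begin{pmatrix} B_i & w_i \\ 0 & c_i \end{pmatrix},
\]
with $D_i, B_i \in GL_2(\R)$. I claim that in either case the $2\times 2$ block tuple is irreducible. A common invariant line of $(D_1,\ldots,D_N)$ in the quotient $\R^3/V$ would lift to a two-dimensional $\mathsf{A}$-invariant subspace strictly containing $V$, while a common invariant line $L \subset V$ of $(B_1,\ldots,B_N)$ would itself be $\mathsf{A}$-invariant. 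Either possibility furnishes a full invariant flag in $\R^3$, making $\mathsf{A}$ simultaneously upper triangularisable and contradicting the hypothesis.

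Theorem \ref{thm:intro-block} with $\ell = 2$ now identifies the set of $\varphi^s$-equilibrium states of $\mathsf{A}$ with that of the block diagonal tuple obtained by zeroing out the off-diagonal blocks. In the case $\dim V = 1$ this tuple already has the desired form with $C_i = D_i$. In the case $\dim V = 2$ the diagonal tuple is $\mathrm{diag}(B_i, c_i)$; a simultaneous conjugation by the cyclic coordinate permutation $(e_1,e_2,e_3)\mapsto(e_3,e_1,e_2)$, which is orthogonal and so leaves $\varphi^s$ pointwise invariant, rearranges this into $\mathrm{diag}(c_i, B_i)$, and I set $b_i = c_i$, $C_i = B_i$. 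The only step with real content is the irreducibility argument for the $2\times 2$ block tuple, as it is there that the non-upper-triangularisability hypothesis is used; the remainder is the conjunction of change-of-basis invariance with Theorem \ref{thm:intro-block}.
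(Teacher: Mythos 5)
Your proof is correct and follows essentially the same route as the paper: change basis to put $\mathsf{A}$ into block upper-triangular form with a $1\times 1$ and a $2\times 2$ block, observe that reducibility of the $2\times 2$ block would produce a full invariant flag and hence contradict the non-upper-triangularisability hypothesis, invoke Theorem~\ref{thm:intro-block} to zero out the off-diagonal block, and in the $\dim V = 2$ case conjugate by the cyclic coordinate permutation to put the scalar in the top-left corner. The only difference is cosmetic: you spell out the conjugation invariance of the set of $\varphi^s$-equilibrium states and the quotient/restriction lifting argument for irreducibility, both of which the paper leaves implicit.
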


\begin{proof}
  If $\mathsf{A}$ preserves a $1$-dimensional subspace of $\mathbb{R}^3$, then there exist $(b_1,\ldots,b_N)\in (\mathbb{R} \setminus \{0\})^N$, $\mathsf{C}=(C_1,\ldots,C_N)\in GL_2(\mathbb{R})^N$, $1\times2$ matrices $D_1,\ldots,D_N$, and a change of basis matrix $X$ such that
  \[
    X^{-1}A_iX =
    \begin{pmatrix}
      b_i & D_i \\
      0   & C_i
    \end{pmatrix}
  \]
  for all $i \in \{ 1,\ldots,N \}$. If $\mathsf{C}$ is reducible, then by a further change of basis we see that $\mathsf{A}$ is simultaneously upper triangularisable which is a contradiction, so $\mathsf{C}$ must be irreducible. By Theorem \ref{thm:intro-block}, the set of equilibrium states is unchanged if we replace the matrices $D_i$ with zero. This completes the proof in the case where $\mathsf{A}$ preserves a $1$-dimensional subspace.

  If $\mathsf{A}$ preserves a $2$-dimensional subspace of $\mathbb{R}^3$, then there instead exist $(b_1,\ldots,b_N)\in (\mathbb{R} \setminus \{0\})^N$, $\mathsf{C}=(C_1,\ldots,C_N)\in GL_2(\mathbb{R})^N$, $2\times1$ matrices $D_1,\ldots,D_N$, and a change of basis matrix $X$ such that
  \[
    X^{-1}A_iX =
    \begin{pmatrix}
      C_i & D_i \\
      0   & b_i
    \end{pmatrix}
  \]
  for all $i \in \{ 1,\ldots,N \}$. If $\mathsf{C}$ is reducible then $\mathsf{A}$ is upper triangularisable which is a contradiction, so we again find that $\mathsf{C}$ is irreducible. Again, by Theorem \ref{thm:intro-block}, the set of equilibrium states is unchanged if we replace the matrices $D_i$ with zero. Since 
  \[
    \begin{pmatrix}
      0 & 1 & 0 \\
      0 & 0 & 1 \\
      1 & 0 & 0
    \end{pmatrix}^{-1}
    \begin{pmatrix}
      C_i & 0 \\
      0 & b_i
    \end{pmatrix}
    \begin{pmatrix}
      0 & 1 & 0 \\
      0 & 0 & 1 \\
      1 & 0 & 0
    \end{pmatrix}
    =
    \begin{pmatrix}
      b_i & 0 \\
      0 & C_i
    \end{pmatrix}
  \]
  for all $i \in \{ 1,\ldots,N \}$ we have finished the proof.
\end{proof}

The following result now suffices to complete the proof of Theorem \ref{thm:3d}.

\begin{proposition}\label{pr:3d-final}
  Let $1<s<2$ and $\mathsf{A} = (A_1,\ldots,A_N) \in GL_3(\R)^N$ be such that
  \begin{equation*}
    A_i =
    \begin{pmatrix}
      b_i & 0 \\
      0 & C_i
    \end{pmatrix}
  \end{equation*}
  for all $i \in \{ 1,\ldots,N \}$, where $\mathsf{B} = (b_1,\ldots,b_N) \in (\R \setminus \{0\})^N$ and $\mathsf{C} = (C_1,\ldots,C_N) \in GL_2(\R)^N$ is irreducible. Then there exist at most three distinct ergodic $\varphi^s$-equilibrium states of $\mathsf{A}$ and they are all fully supported.
\end{proposition}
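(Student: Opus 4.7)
The plan is to show that the three-dimensional singular value function $\varphi^s(A_\iii)$ is the pointwise maximum of three submultiplicative potentials, each of which reduces via a scalar rescaling to a norm-potential problem for an irreducible $2\times 2$ tuple, so that Theorem \ref{thm:FeKa} applies.

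Since $A_\iii$ is block diagonal with blocks $b_\iii$ and $C_\iii$, its singular values are $|b_\iii|$ together with $\alpha_1(C_\iii) \ge \alpha_2(C_\iii)$. For $1<s<2$ we have $\varphi^s(A_\iii) = \alpha_1(A_\iii)\alpha_2(A_\iii)^{s-1}$, and splitting on whether $|b_\iii| \ge \alpha_1(C_\iii)$, $\alpha_1(C_\iii) \ge |b_\iii| \ge \alpha_2(C_\iii)$, or $|b_\iii| \le \alpha_2(C_\iii)$ yields the pointwise identity
\[
\varphi^s(A_\iii) = \max\bigl\{\Phi_1(A_\iii),\,\Phi_2(A_\iii),\,\Phi_3(A_\iii)\bigr\},
\]
where $\Phi_1(A_\iii) := |b_\iii|\,\|C_\iii\|^{s-1}$, $\Phi_2(A_\iii) := \|C_\iii\|\,|b_\iii|^{s-1}$, and $\Phi_3(A_\iii) := \varphi^s(C_\iii) = \|C_\iii\|\,\alpha_2(C_\iii)^{s-1}$. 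Each $\Phi_j$ is submultiplicative, so the pressures $P_\mathsf{A}(\Phi_j)$ and subadditive Lyapunov exponents $\lambda_\mathsf{A}(\Phi_j,\mu)$ are well defined for ergodic $\mu\in\MM_\sigma(\Sigma)$.

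The sandwich $\max_j \sum_{\iii\in\Sigma_n}\Phi_j(A_\iii) \le \sum_{\iii\in\Sigma_n}\varphi^s(A_\iii) \le 3\max_j \sum_{\iii\in\Sigma_n}\Phi_j(A_\iii)$ forces $P_\mathsf{A}(\varphi^s) = \max_j P_\mathsf{A}(\Phi_j)$; Kingman's subadditive ergodic theorem applied to each $\Phi_j$ together with the pointwise max identity above yields $\lambda_\mathsf{A}(\varphi^s,\mu) = \max_j \lambda_\mathsf{A}(\Phi_j,\mu)$ for every ergodic $\mu$. If $\mu$ is an ergodic $\varphi^s$-equilibrium state of $\mathsf{A}$ and $j^*$ realises $\max_j \lambda_\mathsf{A}(\Phi_j,\mu)$, then the chain
\[
\max_k P_\mathsf{A}(\Phi_k) = P_\mathsf{A}(\varphi^s) = h(\mu) + \lambda_\mathsf{A}(\Phi_{j^*},\mu) \le P_\mathsf{A}(\Phi_{j^*}) \le \max_k P_\mathsf{A}(\Phi_k)
\]
collapses to equality, so $\mu$ is a $\Phi_{j^*}$-equilibrium state of $\mathsf{A}$ with $P_\mathsf{A}(\Phi_{j^*}) = P_\mathsf{A}(\varphi^s)$.

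The proof is completed by showing that each $\Phi_j$ admits a unique fully supported ergodic equilibrium state. Setting $\hat C_i := |b_i|^{1/(s-1)} C_i$, $\tilde C_i := |b_i|^{s-1} C_i$, and $D_i := |\det C_i|^{(s-1)/(2-s)} C_i$, a direct computation gives $\Phi_1(A_\iii) = \|\hat C_\iii\|^{s-1}$, $\Phi_2(A_\iii) = \|\tilde C_\iii\|$, and $\Phi_3(A_\iii) = \|D_\iii\|^{2-s}$, using $\varphi^s(C_\iii) = \|C_\iii\|^{2-s}|\det C_\iii|^{s-1}$ in the last case. Rescaling by nonzero scalars preserves irreducibility, so $\hat{\mathsf{C}}, \tilde{\mathsf{C}}, \mathsf{D}$ are all irreducible tuples in $GL_2(\R)^N$; hence Theorem \ref{thm:FeKa} provides in each case a unique fully supported $\|\cdot\|^{t_j}$-equilibrium state $\mu_j$, with $t_j\in\{s-1,1,2-s\}$. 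Combining with the reduction of the previous paragraph, the set of ergodic $\varphi^s$-equilibrium states of $\mathsf{A}$ is contained in $\{\mu_1,\mu_2,\mu_3\}$, giving the bound of three ergodic equilibrium states, all fully supported. The main technical point is the case analysis behind the max identity; the Kingman step and the reduction to Theorem \ref{thm:FeKa} via scalar rescalings of $\mathsf{C}$ are straightforward once that identity is in hand.
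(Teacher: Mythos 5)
Your proposal is correct and lands on the same three auxiliary irreducible $2\times2$ tuples and the same invocation of Theorem \ref{thm:FeKa} as the paper, so the substance is essentially identical. The organization differs slightly: the paper runs a case analysis at the level of the \emph{measure}, splitting on where the Lyapunov exponent $\lambda_{\mathsf{B}}(\alpha_1,\nu)$ sits relative to $\lambda_{\mathsf{C}}(\alpha_1,\nu)$ and $\lambda_{\mathsf{C}}(\alpha_2,\nu)$, and then shows the equilibrium state in each case is a $\|\cdot\|^t$-equilibrium state of the corresponding rescaled tuple. You instead establish the \emph{pointwise} identity $\varphi^s(A_\iii)=\max_j\Phi_j(A_\iii)$ on finite words (which I have checked is correct in all three orderings of $|b_\iii|,\alpha_1(C_\iii),\alpha_2(C_\iii)$), and then pass to pressures and Lyapunov exponents via the sandwich estimate and Kingman. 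Your version is a somewhat cleaner way to package the same case split, as it separates the combinatorial identity from the ergodic-theoretic argument and avoids the paper's slightly awkward ``for all $\nu$'' phrasing, which strictly speaking only applies within each Lyapunov-ordering case. One small point in your favour: your third rescaling $D_i=|\det C_i|^{(s-1)/(2-s)}C_i$ with exponent $t_3=2-s$ is correct, whereas the paper's $A_i'''=|\det C_i|^{(2-s)/(s-1)}C_i$ with $t_3=s-1$ appears to have the exponent inverted (it gives $\alpha_1(C_\iii)\alpha_2(C_\iii)^{2-s}$ rather than $\alpha_1(C_\iii)\alpha_2(C_\iii)^{s-1}$ unless $s=3/2$); this is a benign typo that does not affect the paper's argument but is worth noting.
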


\begin{proof}
  If $\nu \in \MM_\sigma(\Sigma)$ is ergodic then it is easily seen that the three Lyapunov exponents of $\nu$ are $\lambda_{\mathsf{C}}(\alpha_1,\nu)$, $\lambda_{\mathsf{C}}(\alpha_2,\nu)$, and $\lambda_{\mathsf{B}}(\alpha_1,\nu)$ in some order, with $\lambda_{\mathsf{C}}(\alpha_2,\nu)$ not preceding $\lambda_{\mathsf{C}}(\alpha_1,\nu)$. Thus there are three possibilities:
  \begin{itemize}
    \item[(1)] $\lambda_{\mathsf{B}}(\alpha_1,\nu) \ge \lambda_{\mathsf{C}}(\alpha_1,\nu) \ge \lambda_{\mathsf{C}}(\alpha_2,\nu)$,
    \item[(2)] $\lambda_{\mathsf{C}}(\alpha_1,\nu) \ge \lambda_{\mathsf{B}}(\alpha_1,\nu) \ge \lambda_{\mathsf{C}}(\alpha_2,\nu)$,
    \item[(3)] $\lambda_{\mathsf{C}}(\alpha_1,\nu) \ge \lambda_{\mathsf{C}}(\alpha_2,\nu) \ge \lambda_{\mathsf{B}}(\alpha_1,\nu)$.
  \end{itemize}
  Let $\mu \in \MM_\sigma(\Sigma)$ be an ergodic $\fii^s$-equilibrium state of $\mathsf{A}$. By the definition, it satisfies
  \begin{equation*}
    h(\mu) + \lambda_{\mathsf{A}}(\fii^s,\mu) = \sup\{ h(\nu) + \lambda_{\mathsf{A}}(\fii^s,\nu) : \nu \in \MM_\sigma(\Sigma) \}.
  \end{equation*}
  Observe that, since $1<s<2$, we respectively have three possibilities:
  \begin{itemize}
    \item[(1)] $\lambda_{\mathsf{A}}(\fii^s,\nu) = \lambda_{\mathsf{B}}(\alpha_1,\nu) + (s-1)\lambda_{\mathsf{C}}(\alpha_1,\nu)$,
    \item[(2)] $\lambda_{\mathsf{A}}(\fii^s,\nu) = \lambda_{\mathsf{C}}(\alpha_1,\nu) + (s-1)\lambda_{\mathsf{B}}(\alpha_1,\nu)$,
    \item[(3)] $\lambda_{\mathsf{A}}(\fii^s,\nu) = \lambda_{\mathsf{C}}(\alpha_1,\nu) + (s-1)\lambda_{\mathsf{C}}(\alpha_2,\nu)$,
  \end{itemize}
  for all $\nu \in \MM_\sigma(\Sigma)$. We treat these three cases separately and show that, in each case, $\mu$ is a $\|\cdot\|^t$-equilibrium state of an auxiliary irreducible matrix tuple for some $t>0$. By Theorem \ref{thm:FeKa}, this auxiliary tuple has exactly one $\|\cdot\|^t$-equilibrium state and this equilibrium state is fully supported. It follows that at most three possible candidates exist for the ergodic $\fii^s$-equilibrium state $\mu$ of $\mathsf{A}$, and all three are fully supported.
  
  In the first case, we choose the auxiliary irreducible tuple of matrices $\mathsf{A}' = (A_1',\ldots,A_N') \in GL_2(\R)^N$ such that $A_i' = |b_i|^{1/(s-1)} C_i$ for all $i \in \{ 1,\ldots,N \}$. Since
  \begin{equation*}
    \lambda_{\mathsf{A}}(\fii^s,\nu) = \lambda_{\mathsf{B}}(\alpha_1,\nu) + (s-1)\lambda_{\mathsf{C}}(\alpha_1,\nu) = (s-1)\lambda_{\mathsf{A}'}(\alpha_1,\nu)
  \end{equation*}
  for all $\nu \in \MM_\sigma(\Sigma)$ we see that $\mu$ is a $\|\cdot\|^{s-1}$-equilibrium state of $\mathsf{A}'$. In the second case, we let $\mathsf{A}'' = (A_1'',\ldots,A_N'') \in GL_2(\R)^N$ be such that $A_i'' = |b_i|^{s-1}C_i$ for all $i \in \{ 1,\ldots,N \}$. We note that
  \begin{equation*}
    \lambda_{\mathsf{A}}(\fii^s,\nu) = \lambda_{\mathsf{C}}(\alpha_1,\nu) + (s-1)\lambda_{\mathsf{B}}(\alpha_1,\nu) = \lambda_{\mathsf{A}''}(\alpha_1,\nu)
  \end{equation*}
  for all $\nu \in \MM_\sigma(\Sigma)$ and therefore $\mu$ is a $\|\cdot\|$-equilibrium state of the irreducible matrix tuple $\mathsf{A}''$. In the third case, we define $\mathsf{A}''' = (A_1''',\ldots,A_N''') \in GL_2(\R)^N$ such that $A_i''' = |\det(C_i)|^{(2-s)/(s-1)}C_i$ for all $i \in \{ 1,\ldots,N \}$. Since
  \begin{equation*}
    \lambda_{\mathsf{A}}(\fii^s,\nu) = \lambda_{\mathsf{C}}(\alpha_1,\nu) + (s-1)\lambda_{\mathsf{C}}(\alpha_2,\nu) = (s-1)\lambda_{\mathsf{A}'''}(\alpha_1,\nu)
  \end{equation*}
  for all $\nu \in \MM_\sigma(\Sigma)$ we conclude that $\mu$ is a $\|\cdot\|^{s-1}$-equilibrium state of the irreducible matrix tuple $\mathsf{A}'''$.
\end{proof}

\subsection{Remarks on higher-dimensional cases} \label{sec:remarks-high}
It is instructive to count the ways in which these arguments are inadequate for the problem of understanding $\varphi^s$-equilibrium states in four dimensions. Firstly in four dimensions there exist cases where $\mathsf{A}$ is strongly irreducible but $\mathsf{A}^{\wedge 2}$ is reducible (this can occur for example if $\mathsf{A}\in SO(4)^N$) and therefore Theorem \ref{thm:intro-alg} cannot be applied, so additional arguments are required in order to understand the strongly irreducible case. Secondly if $\mathsf{A}$ is irreducible but not strongly irreducible it may fail to be the case that $\mathsf{A}$ preserves a finite union of $1$-dimensional or $1$-codimensional subspaces. Thus the reduction to a generalised permutation matrix is impossible, and additional arguments are required in this case too. Thirdly, in the reducible case one encounters $\mathsf{A} = (A_1,\ldots,A_N)$ of the form
\[
  A_i =
  \begin{pmatrix}
    B_i & 0 \\
    0 & C_i
  \end{pmatrix}
\]
where $(B_1,\ldots,B_N) \in GL_2(\mathbb{R})^N$ and $(C_1,\ldots,C_N) \in GL_2(\mathbb{R})^N$ are both irreducible. Currently we do not know of any mechanism for resolving this case.


\section{Affinity dimension} \label{sec:affinity}

Knowing that $\fii^s$-equilibrium states are fully supported, we may further study the properties of the singular value pressure. We will observe that, as a consequence of Theorem \ref{thm:3d}, removing one matrix from the tuple $\mathsf{A}$ causes a strict drop in the value of the singular value pressure at every $s \in [0,d]$. This, together with Falconer \cite[Theorem 5.3]{Falconer1988}, will then imply Theorem \ref{thm:schmeling}. We remark that the two-dimensional version of this result is known; it follows from Theorem \ref{thm:FeKa2}. Also, the result holds in any class of self-affine sets where the dimension is obtained from the affinity dimension.

\begin{proposition} \label{thm:pressure-drop}
  Let $0 \leq  s \le d$ and $\mathsf{A} = (A_1,\ldots,A_N) \in GL_d(\R)^N$. If all the $\fii^s$-equilibrium states of $\mathsf{A}$ are fully supported and $\mathsf{A}' = (A_1,\ldots,A_{N-1}) \in GL_d(\R)^{N-1}$, then $P_{\mathsf{A}'}(\fii^s) < P_{\mathsf{A}}(\fii^s)$. Moreover, if $P_{\mathsf{A}}(\fii^d) \le 0$, then $\dimaff(\mathsf{A}') < \dimaff(\mathsf{A})$.
\end{proposition}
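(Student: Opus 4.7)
The plan is to use the variational principle in both directions and exploit the canonical embedding of the subshift $\Sigma' := \{1,\ldots,N-1\}^{\N}$ into $\Sigma = \{1,\ldots,N\}^{\N}$: every $\sigma$-invariant probability measure on $\Sigma'$ extends to a $\sigma$-invariant measure on $\Sigma$ that assigns zero mass to every cylinder containing the symbol $N$, and so is not fully supported on $\Sigma$. For the first assertion I will select a $\fii^s$-equilibrium state $\mu'$ of $\mathsf{A}'$ (which exists by the results of \S\ref{sec:equilibrium}). Since every term in the defining sums for $h(\mu')$ and $\lambda_{\mathsf{A}'}(\fii^s,\mu')$ that corresponds to a finite word using the symbol $N$ vanishes, and since $A_\iii$ agrees in $\mathsf{A}$ and $\mathsf{A}'$ whenever $\iii\in\Sigma'_*$, both quantities are the same whether computed over $\Sigma'$ or over $\Sigma$. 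This yields
$$P_{\mathsf{A}'}(\fii^s)=h(\mu')+\lambda_{\mathsf{A}}(\fii^s,\mu')\le P_{\mathsf{A}}(\fii^s);$$
if equality held then $\mu'$, viewed on $\Sigma$, would be a $\fii^s$-equilibrium state of $\mathsf{A}$ which is manifestly not fully supported, contradicting the hypothesis. So the inequality is strict.

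For the ``moreover'' clause, set $s^\star := \dimaff(\mathsf{A})$. The assumption $P_{\mathsf{A}}(\fii^d)\le 0$, together with $P_{\mathsf{A}}(\fii^0) = \log N > 0$ and the continuity and strict monotonicity of the singular value pressure on $[0,d]$, places $s^\star$ in $[0,d]$ with $P_{\mathsf{A}}(\fii^{s^\star}) = 0$. Applying the first part of the proposition at $s^\star$---where the standing hypothesis provides that every $\fii^{s^\star}$-equilibrium state of $\mathsf{A}$ is fully supported---yields $P_{\mathsf{A}'}(\fii^{s^\star}) < 0$. Since $P_{\mathsf{A}'}(\fii^0) = \log(N-1)\ge 0$, the intermediate value theorem together with strict monotonicity of $s\mapsto P_{\mathsf{A}'}(\fii^s)$ will then place its unique zero strictly below $s^\star$, giving $\dimaff(\mathsf{A}') < \dimaff(\mathsf{A})$ as required.

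I do not foresee any serious obstacle; the only minor point to watch is the reading of ``$\fii^s$-equilibrium state'' in the hypothesis. Should the hypothesis be intended to concern only ergodic equilibrium states, one applies the ergodic decomposition to the $\mu'$ constructed above to obtain an ergodic $\fii^s$-equilibrium state of $\mathsf{A}'$ which, upon transfer to $\Sigma$, would be an ergodic $\fii^s$-equilibrium state of $\mathsf{A}$ that still charges only $\Sigma'$ and is therefore not fully supported on $\Sigma$; the contradiction is reached in the same manner.
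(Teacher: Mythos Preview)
Your argument is correct and follows essentially the same approach as the paper: pick a $\fii^s$-equilibrium state of $\mathsf{A}'$, push it forward to $\Sigma$ via the canonical embedding, observe that it cannot be fully supported there and hence cannot be a $\fii^s$-equilibrium state of $\mathsf{A}$, and conclude strict inequality via the variational principle. Your treatment of the ``moreover'' clause is slightly more explicit than the paper's (which simply invokes continuity and strict monotonicity of $s\mapsto P_{\mathsf{A}}(\fii^s)$), but the underlying reasoning is identical.
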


\begin{proof}
  Let $\mu \in \MM_\sigma(\Sigma)$ be a $\fii^s$-equilibrium state of $\mathsf{A}$. Then
  \begin{equation} \label{eq:pressure-drop}
    h(\nu) + \lambda_{\mathsf{A}}(\fii^s,\nu) < h(\mu) + \lambda_{\mathsf{A}}(\fii^s,\mu) = P_{\mathsf{A}}(\fii^s)
  \end{equation}
  for all $\nu \in \MM_\sigma(\Sigma)$ which are not $\fii^s$-equilibrium states of $\mathsf{A}$. If $\nu \in \MM_\sigma(\Sigma)$ is a $\fii^s$-equilibrium state of $\mathsf{A}'$, then it is supported on $\{ 1,\ldots,N-1 \}^{\N} \subsetneq \Sigma$ and thus, by the assumption, it cannot be a $\fii^s$-equilibrium state of $\mathsf{A}$. Therefore, \eqref{eq:pressure-drop} gives the first claim. The second claim follows immediately from this since the singular value pressure, as a function of $s$, is continuous and strictly decreasing.
\end{proof}

\begin{remark}
  If $\mathsf{A} = (A_1,\ldots,A_N) \in GL_d(\R)^N$, all the $\fii^s$-equilibrium states of $\mathsf{A}$ are fully supported, and $\Gamma$ is a proper nonempty closed subset of $\Sigma$ satisfying $\sigma(\Gamma) \subset \Gamma$, then we may similarly show that
  \begin{equation*}
    \lim_{n \to \infty} \tfrac{1}{n} \log\sum_{\iii \in \Gamma_n} \fii^s(A_\iii) < P_{\mathsf{A}}(\fii^s).
  \end{equation*}
  Note that the limit above exists since $\iii \in \Gamma_n$ and $\jjj \in \Gamma_m$ whenever $\iii\jjj \in \Gamma_{n+m}$; see \cite[\S 2]{KaenmakiVilppolainen2010}. Here $\Gamma_n = \{ \iii|_n \in \Sigma_n : \iii \in \Gamma \}$. Indeed, there exists $\iii \in \Sigma_*$ which does not appear in any element of $\Gamma$; see \cite[\S 2.1]{KaenmakiRossi2016}. Since $\Gamma \subset \{ \jjj_1\jjj_2\cdots \in \Sigma : |\jjj_k|=|\iii| \text{ and } \jjj_k \ne \iii \text{ for all } k \in \N \}$ and, by iterating, we may assume that $|\iii|=1$ the claim follows from Proposition \ref{thm:pressure-drop}.
\end{remark}


\section{Examples}

In the final section, we present couple of examples. In Example \ref{ex:irred}, we demonstrate that the irreducible but not quasimultiplicative case considered in the course of the proof of Theorem \ref{thm:3d} in \S \ref{sec:irred-case} is nonempty. In Example \ref{ex:no-schmeling}, we exhibit degenerate self-affine sets for which Theorem \ref{thm:schmeling} does not hold.

\begin{lemma}\label{le:irred}
  Let $\mathsf{A} = (A_1,A_2) \in GL_3(\mathbb{R})^2$ be such that
  \[
    A_1 =
    \begin{pmatrix}
      0 & a & 0 \\
      0 & 0 & b \\
      c & 0 & 0
    \end{pmatrix}
    \quad \text{and} \quad A_2 = A_1^T =
    \begin{pmatrix}
      0 & 0 & c \\
      a & 0 & 0 \\
      0 & b & 0
    \end{pmatrix},
  \]
  where $a,b,c \in \mathbb{R} \setminus \{ 0 \}$ and at least two of the numbers $a^2$, $b^2$, $c^2$ are distinct. Then $\mathsf{A}$ is irreducible.
\end{lemma}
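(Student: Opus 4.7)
The plan is to reduce the problem to showing that any common invariant subspace must be spanned by a subset of the standard basis vectors, and then to use the fact that $A_1$ cyclically permutes the coordinate axes to rule out any proper nontrivial such subspace.

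First, I would compute the products $A_1 A_2$ and $A_2 A_1$ directly and check that both are diagonal. From the definitions, $A_1$ acts as $e_1 \mapsto c e_3$, $e_2 \mapsto a e_1$, $e_3 \mapsto b e_2$, while $A_2$ acts as $e_1 \mapsto a e_2$, $e_2 \mapsto b e_3$, $e_3 \mapsto c e_1$. A short calculation yields
\[
A_1 A_2 = \mathrm{diag}(a^2, b^2, c^2), \qquad A_2 A_1 = \mathrm{diag}(c^2, a^2, b^2).
\]
Any subspace $V \subset \R^3$ invariant under both $A_1$ and $A_2$ is therefore invariant under the two diagonal matrices $D_1 := A_1 A_2$ and $D_2 := A_2 A_1$, and hence under the unital algebra $\mathcal{D} \subset \mathrm{End}(\R^3)$ they generate.

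The key step is to show that, under the hypothesis that at least two of $a^2, b^2, c^2$ are distinct, the algebra $\mathcal{D}$ contains each of the three rank-one diagonal projections $E_i$ onto $\mathrm{span}(e_i)$ for $i\in\{1,2,3\}$. I would verify this by cases according to which two (or all three) of $a^2,b^2,c^2$ are distinct. If $a^2,b^2,c^2$ are pairwise distinct then $D_1$ already has simple spectrum and its spectral projections are $E_1,E_2,E_3$. If exactly two of the three numbers coincide, one checks that suitable affine combinations of $I$, $D_1$, $D_2$ yield each $E_i$: for example, when $a^2 = b^2 \ne c^2$, $D_1 - a^2 I$ is a nonzero multiple of $E_3$ and $D_2 - a^2 I$ is a nonzero multiple of $E_1$, so $E_2 = I - E_1 - E_3$ also lies in $\mathcal{D}$. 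The other two subcases are handled identically.

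Since $V$ is invariant under each $E_i$, it decomposes as $V = \bigoplus_{i=1}^3 (V \cap \mathrm{span}(e_i))$, so $V = \mathrm{span}\{e_i : i \in S\}$ for some $S \subset \{1,2,3\}$. Finally, $A_1$ maps $\mathrm{span}(e_1) \to \mathrm{span}(e_3)$, $\mathrm{span}(e_2) \to \mathrm{span}(e_1)$, $\mathrm{span}(e_3) \to \mathrm{span}(e_2)$, so $A_1$-invariance of $V$ forces $S$ to be invariant under the $3$-cycle $(1\,3\,2)$; the only such subsets are $\emptyset$ and $\{1,2,3\}$, corresponding to $V = \{0\}$ or $V = \R^3$. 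Hence $\mathsf{A}$ is irreducible.

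I expect the main (minor) obstacle to be the bookkeeping of the three subcases where exactly one pair among $a^2,b^2,c^2$ coincides, to ensure that in each subcase one can recover all three coordinate projections from $I$, $D_1$ and $D_2$; the excluded case $a^2 = b^2 = c^2$ would collapse $D_1$ and $D_2$ to scalars and is precisely what the hypothesis forbids.
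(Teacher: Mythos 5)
Your proposal is correct, and it reaches the conclusion by a route that is related to but structurally different from the paper's. Both arguments hinge on the same computation, namely that $A_1A_2=\mathrm{diag}(a^2,b^2,c^2)$ and $A_2A_1=\mathrm{diag}(c^2,a^2,b^2)$, and both finish by observing that $A_1$ cyclically permutes the coordinate axes. The paper, however, first reduces to the case of a one-dimensional common invariant subspace by passing to the orthogonal complement and exploiting $A_2=A_1^T$ (so that the transposed tuple is again $\{A_1,A_2\}$), and then argues that a common eigenvector must lie in $\ker(A_1A_2-A_2A_1)$, which under the hypothesis is either $\{0\}$ or a single coordinate axis not preserved by $A_1$ or $A_2$. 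You instead work with the unital algebra generated by $A_1A_2$ and $A_2A_1$: the hypothesis that at least two of $a^2,b^2,c^2$ differ lets you extract all three coordinate projections $E_1,E_2,E_3$ (via spectral projections in the simple-spectrum case and the affine combinations $D_i-\lambda I$ in the degenerate cases), so any common invariant subspace splits as a sum of coordinate lines, and the $3$-cycle induced by $A_1$ on the axes rules out proper nontrivial such subspaces. Your version buys uniform treatment of the one- and two-dimensional cases without the transpose/orthocomplement reduction (and so does not use the special relation $A_2=A_1^T$ at that step), at the cost of the mild case analysis needed to produce the projections; the paper's commutator argument is shorter once the reduction to dimension one is in place. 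Both uses of the hypothesis are equivalent: it is exactly what prevents $D_1$ and $D_2$ from being equal scalars, i.e.\ what makes the commutator nonzero.
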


\begin{proof}
  Let us suppose for a contradiction that $\mathsf{A}$ is reducible. Clearly the common invariant subspace has dimension either $1$ or $2$. We may assume that it has dimension $1$. Indeed, if it has dimension $2$, then its orthogonal complement has dimension $1$ and is preserved by $A_1^T$ and $A_2^T$, i.e.\ $A_2$ and $A_1$. This $1$-dimensional space must be an eigenspace of $A_1$ and of $A_2$ and therefore the two matrices commute on it, so $A_1A_2-A_2A_1$ maps this subspace to zero.

  To complete the proof we will show that $\mathrm{ker}(A_1A_2-A_2A_1)$ cannot contain a subspace which is invariant under either $A_1$ or $A_2$. Since
  \[
    A_1A_2=
    \begin{pmatrix}
      a^2 & 0 & 0 \\
      0 & b^2 & 0 \\
      0 & 0 & c^2
    \end{pmatrix}
    \quad \text{and} \quad A_2A_1=
    \begin{pmatrix}
      c^2 & 0 & 0 \\
      0 & a^2 & 0 \\
      0 & 0 & b^2
    \end{pmatrix}
  \]
  we have
  \[
    A_1A_2-A_2A_1=
    \begin{pmatrix}
      a^2-c^2 & 0 & 0 \\
      0 & b^2-a^2 & 0 \\
      0 & 0 & c^2-b^2
    \end{pmatrix}.
  \]
  If $a^2$, $b^2$, $c^2$ are all distinct, then the determinant of this matrix is nonzero and its kernel is simply $\{0\}$. Otherwise the numbers $a^2$, $b^2$, $c^2$ take exactly two distinct values and so exactly one of the diagonal entries is zero. This implies that $\mathrm{ker}(A_1A_2-A_2A_1)$ is one of the three coordinate axes. Since there is no coordinate axis that is preserved by either $A_1$ or $A_2$ we have achieved a contradiction.
\end{proof}

\begin{example} \label{ex:irred}
  In this example, we exhibit an irreducible tuple $\mathsf{A}$ of $3 \times 3$ matrices for which the $\fii^s$-equilibrium state is not unique. Let $d=3$, $k=1<s<2$, and note that $(d-k)\binom{d}{k}=6$. Choose $\lambda \in \mathbb{R}$ such that $|\lambda|\notin \{0,1\}$ and define
  \[
    A_1 =
    \begin{pmatrix}
      0 & 0 & \lambda \\
      1 & 0 & 0 \\
      0 & \lambda & 0
    \end{pmatrix},
    \quad \text{and} \quad A_2 =
    \begin{pmatrix}
      0 & 1 & 0 \\
      0 & 0 & \lambda \\
      \lambda & 0 & 0
    \end{pmatrix}.
  \]
  By Lemma \ref{le:irred}, we see that $\mathsf{A} = (A_1,A_2) \in GL_3(\R)^2$ is irreducible. The basis for $\mathbb{R}^6$ used in Proposition \ref{thm:ddim-permutation} is then given by $e_{\{1\},2}, e_{\{2\},3},e_{\{3\},1},e_{\{1\},3}, e_{\{2\},1},e_{\{3\},2}$,
  and in this basis we have
  \[
    \mathfrak{h}_s(A_1) =
    \begin{pmatrix}
      0 & 0 & |\lambda| & 0 & 0 & 0 \\
      |\lambda|^{s-1} & 0 & 0 & 0 & 0 & 0 \\
      0 & |\lambda|^s & 0 & 0 & 0 & 0 \\
      0 & 0 & 0 & 0 & 0 & |\lambda|^s \\
      0 & 0 & 0 & |\lambda|^{s-1} & 0 & 0 \\
      0 & 0 & 0 & 0 & |\lambda| & 0
    \end{pmatrix}
  \]
  and
  \[
    \mathfrak{h}_s(A_2) =
    \begin{pmatrix}
      0 & |\lambda|^{s-1} & 0 & 0 & 0 & 0 \\
      0 & 0 & |\lambda|^s & 0 & 0 & 0 \\
      |\lambda| & 0 & 0 & 0 & 0 & 0 \\
      0 & 0 & 0 & 0 & |\lambda|^{s-1} & 0 \\
      0 & 0 & 0 & 0 & 0 & |\lambda| \\
      0 & 0 & 0 & |\lambda|^s & 0 & 0
    \end{pmatrix},
  \]
  where $\mathfrak{h}_s \colon P_3(\R) \to P_6(\R)$ is as in Proposition \ref{thm:ddim-permutation}. Thus, if we write
  \begin{alignat*}{2}
    B_1 &=
    \begin{pmatrix}
      0 & 0 & |\lambda| \\
      |\lambda|^{s-1} & 0 & 0 \\
      0 & |\lambda|^s & 0
    \end{pmatrix},
    &\qquad B_2 &=
    \begin{pmatrix}
      0 & |\lambda|^{s-1} & 0 \\
      0 & 0 & |\lambda|^s \\
      |\lambda| & 0 & 0
    \end{pmatrix}, \\
    D_1 &=
    \begin{pmatrix}
      0 & 0 & |\lambda|^s \\
      |\lambda|^{s-1} & 0 & 0 \\
      0 & |\lambda| & 0
    \end{pmatrix},
    & D_2 &=
    \begin{pmatrix}
      0 & |\lambda|^{s-1} & 0 \\
      0 & 0 & |\lambda| \\
      |\lambda|^s & 0 & 0
    \end{pmatrix},
  \end{alignat*}
  then
  \[
    \mathfrak{h}_s(A_1) =
    \begin{pmatrix}
      B_1 & 0 \\
      0 & D_1
    \end{pmatrix}
    \quad \text{and} \quad \mathfrak{h}_s(A_2) =
    \begin{pmatrix}
      B_2 & 0 \\
      0 & D_2
    \end{pmatrix}.
  \]
  By Lemma \ref{le:irred}, both $\mathsf{B} = (B_1,B_2) \in GL_3(\R)^2$ and $\mathsf{D} = (D_1,D_2) \in GL_3(\R)^2$ are irreducible. Defining
  \[
    X = X^{-1} =
    \begin{pmatrix}
      0 & 1 & 0 \\
      1 & 0 & 0 \\
      0 & 0 & 1
    \end{pmatrix},
  \]
  it is easy to see that
  \[
    XB_1X^{-1} = D_2 \quad \text{and} \quad XB_2X^{-1} = D_1.
  \]
  Indeed, it suffices only to check one of these two equations directly, since the relations $B_1^T=B_2$, $D_1^T=D_2$, and $X^T=X^{-1}=X$ imply that taking the transpose of either equation transforms it into the other. In particular, it follows that $\mathsf{B}$ and $\mathsf{D}$ have the same norm pressure, $P_{\mathsf{B}}(\|\cdot\|)=P_{\mathsf{D}}(\|\cdot\|)$.

  By Corollary \ref{co:permutation-equilibrium}, the $\varphi^s$-equilibrium states of $\mathsf{A}$ are precisely the $\|\cdot\|$-equilibrium states of $(\mathfrak{h}_s(A_1),\mathfrak{h}_s(A_2))$, and by Theorem \ref{thm:FeKa}, or more precisely, by \cite[Theorem 1.7]{FengKaenmaki2011}, the ergodic $\|\cdot\|$-equilibrium states of that pair of matrices are precisely the ergodic $\|\cdot\|$-equilibrium states of the two pairs $\mathsf{B}$ and $\mathsf{D}$. By irreducibility, it follows that $\mathsf{B}$ and $\mathsf{D}$ have exactly one $\|\cdot\|$-equilibrium state each, which we denote by $\mu_{\mathsf{B}}$ and $\mu_{\mathsf{D}}$, respectively, and
  \[h(\mu_{\mathsf{B}})+\lambda_{\mathsf{B}}(\|\cdot\|,\mu)=P_{\mathsf{B}}(\|\cdot\|)=P_{\mathsf{D}}(\|\cdot\|)=h(\mu_{\mathsf{D}})+\lambda_{\mathsf{D}}(\|\cdot\|,\mu).\]
  To show that $\mathsf{A}$ has exactly $2$ ergodic $\varphi^s$-equilibrium states it is therefore necessary and sufficient to show that $\mu_{\mathsf{B}} \neq \mu_{\mathsf{D}}$.

  It was also shown in \cite[Theorem 1.7]{FengKaenmaki2011} that the measures $\mu_{\mathsf{B}}$ and $\mu_{\mathsf{D}}$ satisfy the following Gibbs property: there exists $C \ge 1$ such that
  \[
    C^{-1}e^{-|\iii|P_{\mathsf{B}}(\|\cdot\|)}\|B_\iii\| \le \mu_{\mathsf{B}}([\iii]) \le Ce^{-|\iii|P_{\mathsf{B}}(\|\cdot\|)}\|B_\iii\|
  \]
  and
  \[
    C^{-1}e^{-|\iii|P_{\mathsf{D}}(\|\cdot\|)}\|D_\iii\| \le \mu_{\mathsf{D}}([\iii]) \le Ce^{-|\iii|P_{\mathsf{D}}(\|\cdot\|)}\|D_\iii\|
  \]
  for all $\iii \in \Sigma_*$. In particular, by the spectral radius formula, this implies
  \[
    \lim_{n\to\infty} \mu_{\mathsf{B}}([\iii^n])^{1/n} = e^{-|\iii|P_{\mathsf{B}}(\|\cdot\|)} \lim_{n\to\infty} \|B_{\iii}^n\|^{1/n} = e^{-|\iii|P_{\mathsf{B}}(\|\cdot\|)}\rho(B_{\iii})
  \]
  and
  \[
    \lim_{n\to\infty} \mu_{\mathsf{D}}([\iii^n])^{1/n} = e^{-|\iii|P_{\mathsf{D}}(\|\cdot\|)}\lim_{n\to\infty} \|D_{\iii}^n\|^{1/n} = e^{-|\iii|P_{\mathsf{D}}(\|\cdot\|)}\rho(D_{\iii})=e^{-|\iii|P_{\mathsf{B}}(\|\cdot\|)}\rho(D_{\iii})
  \]
  for all $\iii \in \Sigma_*$. Here $\iii^n$ is the $n$ times concatenation of $\iii$. It follows, in particular, that if $\mu_{\mathsf{B}} = \mu_{\mathsf{D}}$, then $\rho(B_{\iii}) = \rho(D_{\iii})$ for every $\iii \in \Sigma_*$. (In fact, also the converse holds; see \cite{Morris2016}.) To show that $\mu_B \neq \mu_D$ we exhibit a word $\iii$ such that $\rho(B_{\iii})\neq \rho(D_{\iii})$. Since $\rho(AA^T)=\rho(A^TA)=\|A\|^2$ for every $A \in GL_d(\mathbb{R})$ we have
  \begin{align*}
    \rho(B_1^2B_2B_1B_2^2)&=\rho(B_1^2B_2(B_1^2B_2)^T)=\|B_1^2B_2\|^2, \\
    \rho(D_1^2D_2D_1D_2^2)&=\rho(D_1^2D_2(D_1^2D_2)^T)=\|D_1^2D_2\|^2.
  \end{align*}
  So to demonstrate that $\mu_B \neq \mu_D$ it is sufficient to show that $\|B_1^2B_2\|\neq \|D_1^2D_2\|$. Since $1<s<2$ we find that if $|\lambda|>1$ then
  \[
    \|B_1^2B_2\| = \left\|
    \begin{pmatrix}
      0 & 0 & |\lambda|^{2s+1} \\
      |\lambda|^{s+1} & 0 & 0 \\
      0 & |\lambda|^{3s-2} & 0
    \end{pmatrix}
    \right\| = |\lambda|^{2s+1},
  \]
  and
  \[
    \|D_1^2D_2\| = \left\|
    \begin{pmatrix}
      0 & 0 & |\lambda|^{s+2} \\
      |\lambda|^{3s-1} & 0 & 0 \\
      0 & |\lambda|^{2s-1} & 0
    \end{pmatrix}
    \right\| = \max\left\{|\lambda|^{s+2},|\lambda|^{3s-1}\right\}<|\lambda|^{2s+1}
  \]
  so that $\|B_1^2B_2\|<\|D_1^2D_2\|$. If on the other hand $0<|\lambda|<1$ then
  \[\|D_1^2D_2\|=|\lambda|^{2s-1}>\max\left\{|\lambda|^{s+1},|\lambda|^{3s-2}\right\}=\|B_1^2B_2\|.\]
 We conclude that in either case $\rho(B_1^2B_2B_1B_2^2) \neq \rho(D_1^2D_2D_1D_2^2)$, so $\mu_B$ and $\mu_D$ are distinct as claimed and therefore $\mathsf{A}$ has exactly two ergodic $\varphi^s$-equilibrium states.
\end{example}

\begin{example} \label{ex:no-schmeling}
  The most obvious example of an affine IFS that does not satisfy the claim of Theorem \ref{thm:schmeling} is the one where one mapping occurs two times. In this example, we exhibit other degenerate self-affine sets for which the property described in Theorem \ref{thm:schmeling} does not hold. Although, for simplicity, the examples are presented in dimension two, the same phenomenon arises also in dimension three.
  
  (1) Let
  \begin{equation*}
    A =
    \begin{pmatrix}
      \tfrac13 & 0 \\
      0 & \tfrac15
    \end{pmatrix}
    \quad\text{and}\quad
    B =
    \begin{pmatrix}
      \tfrac12 & 0 \\
      0 & \tfrac14
    \end{pmatrix},
  \end{equation*}
  and define $f_i \colon [0,1]^2 \to [0,1]^2$ by setting
  \begin{alignat*}{2}
    f_1(x) &= Ax + (0,\tfrac{4}{10}), &\qquad
    f_4(x) &= Bx + (0,\tfrac14), \\
    f_2(x) &= Ax + (\tfrac13,\tfrac{4}{10}), &
    f_5(x) &= Bx + (\tfrac12,\tfrac14). \\
    f_3(x) &= Ax + (\tfrac23,\tfrac{4}{10}),
  \end{alignat*}
  The self-affine set associated to these five mappings is clearly $[0,1] \times \{ \tfrac12 \}$. It is equally clear that $[0,1] \times \{ \tfrac12 \}$ is the self-affine set associated to any chosen four mappings. Thus there is no dimension drop when one mapping is removed.

  (2) Let
  \begin{equation*}
    A =
    \begin{pmatrix}
      \tfrac13 & 0 \\
      0 & \tfrac14
    \end{pmatrix}
    \quad\text{and}\quad
    B =
    \begin{pmatrix}
      \tfrac13 & 0 \\
      0 & 0
    \end{pmatrix},
  \end{equation*}
  and define $f_i \colon [0,1]^2 \to [0,1]^2$ by setting
  \begin{alignat*}{2}
    f_1(x) &= Ax, &\qquad
    f_4(x) &= Bx + (0,\tfrac{3}{8}), \\
    f_2(x) &= Ax + (\tfrac13,\tfrac34), &
    f_5(x) &= Bx + (0,\tfrac12), \\
    f_3(x) &= Ax + (\tfrac23,0), &
    f_6(x) &= Bx + (0,\tfrac{5}{8}).
  \end{alignat*}
  It should be emphasised that the matrix $B$ is not invertible and therefore this example lies beyond the scope of the results in \S \ref{se:sec2}. Let $F = \bigcup_{i=1}^3 f_i(F)$ and $E = \bigcup_{i=1}^6 f_i(E)$ be the self-affine sets corresponding to the first three and six mappings, respectively. Note that $\dimh(F) = 1$ and $E$ satisfies the strong separation condition. The set $L = \bigcup_{i=4}^6 f_i([0,1]^2)$ is a union of three line segments and hence $\dimh(L)=1$. Since
  \begin{equation*}
    E = L \cup \bigcup_{i=1}^3 f_i(E) = F \cup \bigcup_{\iii \in \bigcup_{n=0}^\infty \{ 1,2,3 \}^n} f_\iii(L)
  \end{equation*}
  we have $\dimh(E) = \max\{ \dimh(F),\dimh(L) \} = 1$. Therefore, removing any of the mappings does not drop the dimension.
\end{example}

\begin{ack}
  The authors thank Pablo Shmerkin for discussions related to the topic of the paper. Ian Morris was supported by the Engineering and Physical Sciences Research Council (grant number EP/L026953/1). In respect of RCUK policies on publicly-funded research data, the authors note that no research data were generated in the course of this research.
\end{ack}


\end{document}